\definecolor{light-gray}{gray}{0.8}
\theoremstyle{plain}
\newtheorem{introtheorem}{Theorem}
\newtheorem{theorem}{Theorem}[section]
\newtheorem{conjecture}[theorem]{Conjecture}
\newtheorem{proposition}[theorem]{Proposition}
\newtheorem{lemma}[theorem]{Lemma}
\newtheorem{corollary}[theorem]{Corollary}
\newtheorem{question}[theorem]{Question}
\theoremstyle{definition}
\newtheorem{definition}[theorem]{Definition}
\newtheorem*{definition*}{Definition}
\newtheorem{remark}[theorem]{Remark}
\newtheorem*{remark*}{Remark}
\newtheorem{example}[theorem]{Example}
\newcommand{\cl}{\mathcal{L}}
\newcommand{\cd}{\mathcal{D}}
\newcommand{\Wn}{\mathfrak{W}_n}
\newcommand{\Sn}{\mathfrak{S}_n}
\newcommand{\Wi}{\mathfrak{W}^i}
\newcommand{\C}{\mathbb{C}}   
\newcommand{\Z}{\mathbb{Z}}   
\newcommand{\ds}{\displaystyle}
\newcommand{\abs}[1]{\left| #1 \right|}
\newcommand{\llangle}{\left\langle}
\newcommand{\rrangle}{\right\rangle}
\newcommand{\polyr}[1]{\C[x_1,\ldots,x_{#1}]}
\newcommand{\poly}{\C[x_\bullet]}
\newcommand{\hdes}[1]{\mathcal{D}_H\!\left(#1\right)}
\newcommand{\bn}{\overline{n}}
\newcommand{\bi}{\overline{i}}
\newcommand{\bj}{\overline{j}}
\renewcommand{\bar}[1]{\overline{#1}}
\newcommand{\fh}{\mathfrak{h}}
\newcommand{\fs}{\mathfrak{s}}
\newcommand{\bchi}{\bm{\chi}}
\newcommand{\triv}{\mathbb{1}}
 \newcommand{\splines}[1]{\mathcal{M}_{#1}}
 \newcommand{\lqot}[1]{\mathrm{L}_{#1}}
 \newcommand{\rqot}[1]{\mathrm{R}_{#1}}
 \newcommand{\lrep}[1]{\mathfrak{ch}\left(\mathrm{L}_{#1}\right)}
 \newcommand{\rrep}[1]{\mathfrak{ch}\left(\mathrm{R}_{#1}\right)}
 \newcommand{\choos}[2]{\genfrac(){0pt}{0}{#1}{#2}}
 \DeclareMathOperator{\Neg}{Ng}
 \DeclareMathOperator{\Hess}{Hess}
 \DeclareMathOperator{\Frob}{Frob}
\newcommand{\spline}[1]{\bm{#1}}
\title{Signed permutations and degree-one dot action representations for types B and C}
\author{Nathan R.\,T. Lesnevich}
\thanks{The author was partially supported by NSF grant DMS-1954001 during the initial phases of this project. The author thanks Hsin-Chieh Liao for helpful conversations, and would like to thank Martha Precup and John Shareshian for their continued advice and support.}
\address{Department of Mathematics, Oklahoma State University, Stillwater, OK 74078}
\email{\href{mailto:nlesnev@okstate.edu}{nlesnev@okstate.edu}}
\begin{document}

\begin{abstract}
     A spline is an assignment of polynomials to the vertices of a graph, where the difference of two polynomials along an edge must belong to the ideal labeling that edge. We consider a ring of splines $\splines{H}$ constructed on a graph whose vertices are the Weyl group $\Wn$ of signed permutations, and whose edges and edge-ideals are defined using an order ideal $H$ of positive roots. These splines are a module over the polynomial ring in two ways, and a $\Wn$-module by the dot action. These structures on $\splines{H}$ give rise to the graded left and right dot action representations of $\Wn$. The left representation is the type B/C generalization of the type A dot action for regular semisimple Hessenberg varieties (and thus, chromatic quasisymmetric functions), and the right representation is the same for corresponding manifolds of isospectral matrices (and thus, unicellular LLT polynomials). This paper gives explicit module generators for the degree-one graded piece of $\splines{H}$ and computes the degree-one piece of the both dot action representations for all $H$ using the combinatorial data of $H$.
\end{abstract}
\maketitle

\setcounter{tocdepth}{1}

\section{Introduction}\label{sec:intro}
Let $\Wn$ be the group of signed permutations, which are permutations $w$ on the set $\{1,\ldots,n,-n,\ldots,-1\}$ such that $w(-i) = -w(i)$. The group $\Wn$ is also the Weyl group of types B and C, and so is associated with two distinct root systems. A Hessenberg space $H$ is a particular subset of roots, which determines a set $S(H)$ of transpositions in $\Wn$. Given a Hessenberg space, one can define the \emph{graded ring of splines} $\splines{H}$ on $\Wn$. This ring has two $\poly\coloneqq \polyr{n}$-module structures and a $\Wn$-module structure. This paper determines the algebraic and combinatorial properties of the degree-one graded piece of $\splines{H}$ from the combinatorics of $S(H)$.\par 
Let $(i,j)$ be the unique transposition in $\Wn$ that switches $i$ with $j$ and $-i$ with $-j$ (note $i=-j$ is possible). If $H$ is a Hessenberg space, then the ring of splines is 
 \[
        \splines{H} = \bigoplus_{i \geq 0} \splines{H}^i \coloneqq \left\{ \left.\spline{\rho} \in \prod_{w \in \Wn} \poly\; \right|\; \spline{\rho}(w) - \spline{\rho}(w(i,j)) \in \llangle x_{w(i)}-x_{w(j)} \rrangle\text{ if } (i,j) \in S(H) \right\}\!,
   \]
with (graded) $\Wn$-module structure $w\cdot \spline{\rho}(v) = w\spline{\rho}(w^{-1}v)$ and (graded) $\poly$-module structure given by multiplication.  \par 
The ring $\splines{H}$ is isomorphic to the $T$-equivariant cohomology of a smooth subvariety of the full flag variety $G/B$ called a \emph{regular semisimple Hessenberg variety}. The connection between this combinatorial ring and equivariant cohomology is called \emph{GKM Theory} \cite{GKM_theory}, and it allows for the study of this cohomology in purely combinatorial and algebraic terms. \par 
The $\Wn$-module structure on $\splines{H}$ was first defined as the \emph{dot action} on equivariant cohomology by Tymoczko in \cite{tymoczko2008permutation}. There are two natural $\Wn$-equivariant quotients, called the left $\lqot{H}$ and right $\rqot{H}$ quotients, of $\splines{H}$ that are in fact graded $\C$-vector spaces (the left quotient corresponds to ordinary cohomology). The graded $\Wn$-module structure of $\splines{H}$ induces graded $\Wn$-representations on the quotients. The primary goal of this paper is to compute the characters of the degree-one pieces of 
\[
\lqot{H} = \bigoplus_{i\geq 0} \left(\lqot{H}\right)_i \;\; \text{ and }\;\;\rqot{H} = \bigoplus_{i\geq 0} \left(\rqot{H}\right)_i
\] from the combinatorial data of $S(H)$.  In doing so, we compute the dot action on second (equivariant and ordinary) cohomology for all regular semisimple Hessenberg varieties in types B and C. This generalizes what is known in type A \cite{chow_linearp,chohonglee_second_cohom,ayzenberg2022second}.\par 
These graded representations are of interest to algebraic combinatorists in part because they are the $\Wn$-equivalents of very well-studied $\Sn$-representations, which have connections to chromatic symmetric functions \cite{guaypaquet2016shar_wachs_conj,SW2016chromaticquasisymmetric,brosnan_chow_dotactn_is_chromsym} and LLT polynomials \cite{guaypaquet2016shar_wachs_conj,Ayzenberg2018isospectral, ALEXANDERSSON2018LLTchromsym}. We detail some of these connections in the Appendix. From these works, in type A there are combinatorial formulas to compute the characters of both $\lqot{H}$ and $\rqot{H}$ in at least one basis for the character space of $\Sn$. This is \textbf{not} the case in types B or C.\par 
For one specific family of $H$ (those corresponding to simple roots), the character of $\lqot{H}$ was computed by Stembridge \cite{Stembridge_Permuto} in general type. Stembridge's results are via a connection to \emph{permutohedra}, as the associated Hessenberg varieties are in fact toric varieties. While it is possible to compute the left and right characters for general $H$ in types B and C, even partial results are rare and rely on sophisticated geometric tools \cite{BalibanuCrooks_sheaves,PrecupSommers_sheaves}. Formulas are non-combinatorial (and usually non-positive), require the computation of type B/C Green polynomials and Poincar\'e polynomials of other varieties, and proofs require the geometry of perverse sheaves. \par 
In contrast, for the degree-one graded piece, the main result of this paper applies to all Hessenberg spaces $H$ and is entirely computable from the combinatorics of roots and transpositions in $\Wn$. Moreover, we compute the degree-one pieces of the characters $\lrep{H}$ and $\rrep{H}$ in a positive manner.
\begin{introtheorem}\label{intthm:character}
    If $H$ is a type B or type C Hessenberg space, there exist non-negative integers $a,b \in \mathbb{N}$, integers $c,d \in \{0,1\}$, and a subset $I \subseteq [n]$, each determined by $S(H)$, such that
    \[
     \lrep{H}_1 = a\,\mathbb{1} + \sum_{i\in I} \fh_i + b\,\fh_1 + c\,\fs + d\,\bm{\delta} 
    \]
    and 
    \[
     \rrep{H}_1 = \bm{\chi} + \sum_{i\in I} \left(\fh_i - \mathbb{1}\right) + b\left(\fh_1-\mathbb{1}\right) + c \left(
    \fs-\mathbb{1}\right) + d\,\bm{\delta},
    \]
    where 
    \begin{itemize}
        \item $\bm{\chi}$ is the character of the defining representation of $\Wn$ on $\C^n$
        \item $\fh_i$ is the character of the action of $\Wn$ on the cosets of $\mathfrak{S}_i\times \mathfrak{W}_{n-i}$,
        \item $\fs$ is the character of the action of $\Wn$ on the cosets of $\mathfrak{W}_1\times \mathfrak{W}_{n-1}$,
        \item $\mathbb{1}$ is the character of the trivial representation, and 
        \item $\bm{\delta}$ is the character $w\mapsto (-1)^{\abs{\Neg(w)}}$ where $\Neg(w) = \{w(i) \mid i \in [n],\; w(i) < 0\}$.
    \end{itemize}
    Moreover, $c$ and $d$ are never simultaneously $1$, and $d$ is always $0$ in type B.
\end{introtheorem}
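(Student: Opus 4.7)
My plan is to exploit the explicit module generators for $\splines{H}^1$ that the paper promises in the abstract, and to organize them into $\Wn$-orbits whose $\C$-linear spans carry the recognizable characters $\mathbb{1}$, $\fh_i$, $\fs$, $\bm{\delta}$, and $\bm{\chi}$. A degree-one spline $\spline{\rho}$ can be written $\spline{\rho}(w)=\sum_k a_k(w)x_k$ with $a_k\colon\Wn\to\C$, and the dot action becomes $(w\cdot\spline{\rho})(v)=\sum_k a_{w^{-1}(k)}(w^{-1}v)\,x_k$. The edge conditions from $S(H)$ translate into explicit local identities among the $a_k(w)$ for $S(H)$-adjacent $w$, so once generators are in hand I can sort them by which coefficient functions they involve and by the combinatorial type of their support in $\Wn$.

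Next, I would match each orbit to a known character. Generators whose $a_k$'s are globally constant contribute $\mathbb{1}$; multiplications of the unit spline by the $x_k$ supply a copy of the defining character $\bm{\chi}$, which is killed by the augmentation ideal when passing to $\lqot{H}$ but survives (in a twisted form) in $\rqot{H}$. Generators whose nontrivial coefficients are supported on cosets of a Young subgroup $\mathfrak{S}_i\times\mathfrak{W}_{n-i}$ contribute the induced character $\fh_i$, and the set $I$ is precisely the collection of $i$'s for which such generators exist. Generators attached to the sign-changing transpositions $(i,-i)\in S(H)$ contribute either $\fs$ or the one-dimensional $\bm{\delta}$, depending on how those transpositions interact with the other elements of $S(H)$, while redundancies among these generators account for the integers $a$ and $b$. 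Summing contributions and simplifying via standard identities (e.g.\ expressing $\bm{\chi}$ in the basis of the $\fh_i$'s) should yield the full character of $\splines{H}^1$, and then passing to $\lqot{H}_1$ or $\rqot{H}_1$ amounts to removing the appropriate $\poly$-multiplication image, explaining why the left formula uses $a\,\mathbb{1}$ and the right formula uses $\bm{\chi}+\sum_{i\in I}(\fh_i-\mathbb{1})+\cdots$.

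The main obstacle will be proving the constraint that $c$ and $d$ are never simultaneously $1$, and the type-dependent statement that $d=0$ in type B. Both hinge on the combinatorics of how $(i,-i)$-transpositions sit inside $S(H)$ alongside the $(i,\pm j)$-transpositions. The key will be to show that a $\bm{\delta}$-submodule requires a sign-changing transposition that is \emph{isolated} in $S(H)$ in a precise sense, whereas an $\fs$-submodule requires \emph{coupling} to other transpositions; these two local pictures are incompatible, yielding $cd=0$. For the type distinction, the difference between the type-B root $e_i$ and the type-C root $2e_i$ (even though they produce the same edge ideal $\langle x_{w(i)}\rangle$) manifests in which Hessenberg spaces are admissible: in type B, the ambient root poset structure forces any $(i,-i)\in S(H)$ to come packaged with enough neighboring short roots to suppress the $\bm{\delta}$-contribution, while type C permits configurations where it survives. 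Rigorously establishing these dichotomies by a careful case analysis on the local structure of $S(H)$ around each sign-changing transposition will be the most delicate step.
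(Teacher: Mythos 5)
Your overall strategy---produce explicit degree-one generators, sort them into $\Wn$-orbits whose spans carry the characters $\mathbb{1}$, $\fh_i$, $\fs$, $\bm{\delta}$, $\bm{\chi}$, and then pass to the two quotients---is exactly the strategy of the paper, and several of your specific identifications are right: the constant splines $w\mapsto x_k$ do carry $\bm{\chi}$, die in $\lqot{H}$, and survive in $\rqot{H}$; the coset-supported generators do carry $\fh_i$; and the $d=0$ phenomenon in type B is indeed forced by the root poset order (in type B the root for $(n-1,\bar n)$ lies \emph{above} the root for $(n-1,\overline{n-1})$, so the former cannot enter $S(H)$ without the latter, while in type C the order is reversed). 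Your dichotomy for $cd=0$ is also essentially correct, though the clean statement is simpler than ``isolated vs.\ coupled'': the $\fs$ summand occurs precisely when $(n-1,\bar n)\notin S(H)$ and the $\bm{\delta}$ summand precisely when $(n-1,\bar n)\in S(H)$ but $(n-1,\overline{n-1})\notin S(H)$, so the two conditions contradict each other on the single transposition $(n-1,\bar n)$.

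The genuine gap is that you take the generators as given and never supply a mechanism for proving that your orbit decomposition actually spans $\splines{H}^1$, nor that the surviving pieces are linearly independent after quotienting; without both, no character can be certified. The paper's proof spends most of its effort exactly here: it first reduces the problem so that $\splines{H}^1$ depends only on $S(H)\cap\{(i,i+2),(n-1,\overline{n-1}),(n-1,\bar n)\}$ (making the case analysis finite and type-uniform), then explicitly enumerates, for each $i$, the set $\hdes{i}$ of signed permutations with a unique $H$-inversion, and finally shows that the shortest support elements of the proposed splines sweep out $e$ and all of the $\hdes{i}$ (upper triangularity), so that spanning follows and independence is obtained by matching cardinalities against $\dim\splines{H}^1=n+\sum_i\abs{\hdes{i}}$ via an interval-by-interval count. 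You would also need the linear relations among the families (e.g.\ $\sum_A\spline{f}_i^A=\spline{r}_i-\spline{r}_{i+1}$ and its relatives) to pare the spanning set down to a basis in two different ways, one adapted to each quotient; these relations are what ultimately produce the coefficients $a$ and $b$ and the substitution $\fh_1-\bm{\chi}=\fs$ needed to make the $H=\Delta$ case fit the stated form. None of this is routine, and your plan as written does not indicate how you would carry it out.
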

Theorem \ref{intthm:character} is Proposition \ref{prop:permutohedral} if $H = \Delta$ and Theorem \ref{thm:character} otherwise, which use language from Definition \ref{def:itypes} to describe the coefficients $a,b,c$, and $d$. The representation ring of $\Wn$ is isomorphic to the \emph{type B/C symmetric functions} via the Frobenius characteristic map \cite{Macdonald_symfuncs}. The tools to translate Theorem \ref{intthm:character} into type B/C symmetric functions are in the Appendix. There, we make the observation that $\lrep{H}_1$ is in fact $h_{\lambda,\mu}$-positive. This is particularly interesting, as it generalizes the behavior seen in type A, where the left graded character is conjectured to be $h_\lambda$-positive and is intimately connected to \emph{chromatic symmetric functions}. This is known as the graded Stanley-Stembridge conjecture \cite{StanleyStembridge,SW2016chromaticquasisymmetric}, and has been the subject of much research \cite{STANLEY1995chromsym, Gasharov96, GuayPaquet13,  harada2017cohomology, brosnan_chow_dotactn_is_chromsym,Dahlberg19, Abreu_Nigro20}. A proof of the ungraded conjecture was recently given by Hikita \cite{Hikita_stanstem}.\par

Our theorem is the first type B/C result to provide an explicit non-negative expansion for a graded piece of $\lrep{H}$ or $\rrep{H}$ in terms of characters of irreducible- and/or permutation-representations for all Hessenberg spaces. It is also the first to compute the characters for any graded piece of $\lrep{H}$ and $\rrep{H}$ by directly providing a basis for the representation, and in doing so gives a family of characters with which one can attempt to expand the catalog of results and connections that exist in type A to types B and C.\par 

The paper is structured as follows. Section \ref{sec:background} provides some of the necessary background on signed permutations, Hessenberg spaces, and splines. Section \ref{sec:ideals_of_transpositions} shows how to translate from Hessenberg spaces $H$ to sets $S(H)$ of signed transpositions in a manner that unifies some type B and type C calculations. Section \ref{sec:trivial} reduces the $S(H)$ that one must consider in order to compute $\splines{H}^1$ to a much smaller collection, one that completely unifies the type B and type C calculations. Section \ref{sec:one_inversion} determines which elements of $\Wn$ have exactly one $H$-inversion for each $H$. Section \ref{sec:linear_splines} defines several sets of splines contained in $\splines{H}^1$, provides linear relations between them, and computes the dot action on them. Section \ref{sec:generators} argues, using the elements from \S \ref{sec:one_inversion}, that the splines from \S \ref{sec:linear_splines} form a $\C$-generating set for $\splines{H}^1$, and uses the linear relations between them to reduce the size of this generating set. Section \ref{sec:bases_reps} reduces this generating set further in two different ways, one that results in a basis for $\splines{H}^1$ that is conducive to computing $\lrep{H}_1$ and one that is conducive to computing $\rrep{H}_1$. Then, we prove the main theorem. Appendix \ref{sec:symmetricfunctions} describes how one can translate this result to the language of type B/C symmetric functions.

\section{Background}\label{sec:background}
 Let $[n] \coloneqq \{1,\ldots ,n\}$ and $[\bn] \coloneqq \{1,\ldots ,n,-n,\ldots ,-1\}$. From here on, we use bar notation for negative integers, so that $\bi \coloneqq -i$. When $n$ is clear, let $\poly \coloneqq \polyr{n}$. \par 
 There are several ways to characterize the group $\Wn$. We will employ two of them in this paper\footnote{Two other commonly used characterizations are as the wreath product of $\mathcal{S}_n$ and $\Z_2$, which useful for describing the representation theory of $\Wn$, and as the symmetry group of a hypercube, so $\Wn$ is frequently called the \emph{hyperoctahedral group}.}. First, described in Subsection \ref{ssec:signedperms} is the characterization taken as the definition, the group of signed permutations. Second, as described in Section \ref{ssec:rootsystems}, $\Wn$ is the Weyl group of the root systems of type B and C. We recommend \cite{HumphreysReflections,BjornerBrentiCoxeter}, for a detailed description of root systems and Weyl groups.
 \subsection{Signed Permutations}\label{ssec:signedperms}
 The group of \emph{signed permutations} $\Wn$ is the group of bijections $w \colon [\bn] \to [\bn]$ such that $w\left(\bi\right) = \overline{w(i)}$ for all $i \in [\bn]$. The one-line notation of $w$ is $[w(1),w(2),\ldots ,w(n)]$. The values of $w\left(\bi\right)$ for $i \in [n]$ are omitted in this notation since they are equal to $\overline{w(i)}$. The elements of $\mathfrak{W}_2$ are listed below in one-line notation.
 \[
 \begin{matrix}
     [1,2] & [1,\overline{2}] & [\overline{1},2] & [\overline{1},\overline{2}] &
     [2,1] & [2,\overline{1}] & [\overline{2},1] & [\overline{2},\overline{1}]
 \end{matrix}
 \]
 The cycle notation of a permutation $w \in \Wn$ is the same as the cycle notation treating $w$ as an element of the symmetric group on letters $[\bn]$. For example, $[\overline{1},2] = (\overline{1},1)$ and $[2,1] = (1,2)(\overline{1},\overline{2})$. A signed permutation is a \emph{transposition} if it is one of the following:
 \begin{itemize}
     \item $(i,j) \coloneqq (i,j)(\overline{i},\overline{j})$,
     \item $(i,\overline{j}) \coloneqq (i,\overline{j})(\overline{i},j)$, or
     \item $(i,\overline{i})$,
 \end{itemize}for $i<j \in [n]$. The cardinality of $\Wn$ is $2^nn!$. \par 
The group $\Wn$ is a Coxeter group with generators $s_1,\ldots ,s_{n-1},s_n$ subject to the following four relations:
\begin{enumerate}
    \item $s_i^2 = e$
    \item $s_is_j = s_js_i$ if $\abs{i-j} > 1$
    \item $s_is_{i+1}s_i = s_{i+1}s_is_{i+1}$ if $i \in [n-2]$
    \item $s_{n-1}s_ns_{n-1}s_n = s_ns_{n-1}s_ns_{n-1}$.
\end{enumerate}
Where $s_i \coloneqq (i,i+1)(\overline{i},\overline{i+1})$ and $s_n$ = $(n,\bn)$. These generators are called \emph{simple transpositions}, relation (2) is called a \emph{commuting move}, and relations (3), (4) are called \emph{braid moves}. Given a signed permutation in one-line notation, right multiplication by $s_i$ for $i < n$ switches the entries in the $i$-th and $i+1$-th positions of one-line notation, and right multiplication by $s_n$ negates the last entry. Similarly, left multiplication by $s_i$ for $i < n$ switches the numbers $i$ and $i+1$ in the one-line notation, and left multiplication by $s_n$ negates $n$. As with all finite Coxeter groups, elements of $\Wn$ have \emph{reduced words} (a minimal-length way of writing $w$ in terms of $s_i$) and the \emph{length} $\ell(w)$ is the length of any reduced word. All reduced words for a particular $w \in \Wn$ are related by a sequence of commuting and/or braid moves. \par 
\begin{definition}\label{def:Bruhat}
    The \emph{Bruhat order} on $\Wn$ is the transitive closure of the relations $wt < w$ if $t$ is a transposition and $\ell(wt) < \ell(w)$. 
\end{definition}
It is well known that the Bruhat order is equivalent to the order $v<w$ if a reduced word for $v$ appears as a subword of a reduced word for $w$.
\begin{example}\label{ex:bruhat}
    The following two figures are Hasse diagrams for the Bruhat order on $\mathfrak{W}_2$. One shows the one-line notation (to emphasize the transpositions), and the other shows the reduced word (to emphasize the subwords).
    \begin{center}
        \begin{tikzpicture}[scale=1.5]
        \begin{scope}
            \draw (0,0.2) node (e) {$[1,2]$};
            \draw (-1,1) node (l1) {$[2,1]$};
            \draw (-1,2) node (l2) {$[2,\bar{1}]$};
            \draw (-1,3) node (l3) {$[\bar{1},2]$};
            \draw (1,1) node (r1) {$[1,\bar{2}]$};
            \draw (1,2) node (r2) {$[\bar{2},1]$};
            \draw (1,3) node (r3) {$[\bar{2},\bar{1}]$};
            \draw (0,3.8) node (w0) {$[\bar{1},\bar{2}]$};

            \draw (e)--(l1)--(l2)--(l3)--(w0);
            \draw (e)--(r1)--(r2)--(r3)--(w0);
            \draw (l1)--(r2);
            \draw (l2)--(r3);
            \draw (r1)--(l2);
            \draw (r2)--(l3);
        \end{scope}
        \begin{scope}[xshift=5cm]
            \draw (0,0.2) node (e) {$e$};
            \draw (-1,1) node (l1) {$s_1$};
            \draw (-1,2) node (l2) {$s_1s_2$};
            \draw (-1,3) node (l3) {$s_1s_2s_1$};
            \draw (1,1) node (r1) {$s_2$};
            \draw (1,2) node (r2) {$s_2s_1$};
            \draw (1,3) node (r3) {$s_2s_1s_2$};
            \draw (0,3.8) node (w0) {$s_1s_2s_1s_2$};

            \draw (e)--(l1)--(l2)--(l3)--(w0);
            \draw (e)--(r1)--(r2)--(r3)--(w0);
            \draw (l1)--(r2);
            \draw (l2)--(r3);
            \draw (r1)--(l2);
            \draw (r2)--(l3);
        \end{scope}
        \end{tikzpicture}
    \end{center}
\end{example}
\begin{definition}\label{def:Young}
    Given a subset $J \subseteq \{s_1,\ldots ,s_n\}$ of simple reflections, the \emph{Young subgroup} $\mathfrak{W}_J$ is the subgroup of $\Wn$ generated by $J$. The (left) cosets of $\mathfrak{W}_J$ have a unique shortest coset representative, and the set of shortest coset representatives is denoted $\mathfrak{W}^J$. 
\end{definition}
Because of how frequently the set is referenced in later sections, we let 
\begin{equation}\label{eq:shortreps}
    \Wi \coloneqq \mathfrak{W}^{\{s_1,\ldots ,s_{n}\}\setminus\{s_i\}}
\end{equation}
denote the shortest coset representatives of $\mathfrak{W}_{\{s_1,\ldots,s_n\}\setminus \{s_i\}} = \mathfrak{S}_i \times \mathfrak{W}_{n-i}$.
\subsection{Root Systems}\label{ssec:rootsystems}
Denote by $\Phi$ the set of roots, $\Phi^+$, $\Phi^-$, and $\Delta$ as the set of positive, negative, and simple roots respectively. The set of positive roots is partially ordered by $\alpha < \beta$ if $\beta - \alpha$ is a nonnegative sum of elements in $\Delta$. Given a subset $H \subseteq \Phi^+$, let $S(H) \coloneqq \{s_\alpha \mid \alpha \in H\}$ be the set of reflections associated to the roots in $H$, so that, for example, $S(\Delta)$ is the set of simple reflections. \par
 The positive and simple roots of the type B root systems are 
\begin{equation}\label{eq:Broots}
    \Phi^+ \coloneqq \{e_i \pm e_{j} \mid 1 \leq i < j \leq n\} \cup \{e_i \mid i \in [n]\},
\end{equation}
 and 
\[\Delta \coloneqq \{e_i - e_{i+1} \mid i \in [n-1]\} \cup \{e_n\},\]
respectively. Usually one denotes $\alpha_i \coloneqq e_i-e_{i+1}$ and $\alpha_n \coloneqq e_n$, and a positive root $\alpha \in \Phi^+$ such that $\alpha  = \sum_{i=1}^n c_i\alpha_i$ as the vector $[c_1c_2\ldots c_n]$.\par 
The action of the reflection $s_n$ corresponding to $e_n$ on a vector is negation of the last coordinate in a vector. All other simple reflections $s_i$ act on a vector $v = \sum c_je_j$ by swapping the $i$-th index $c_i$ and $i+1$-th index $c_{i+1}$. \par 
    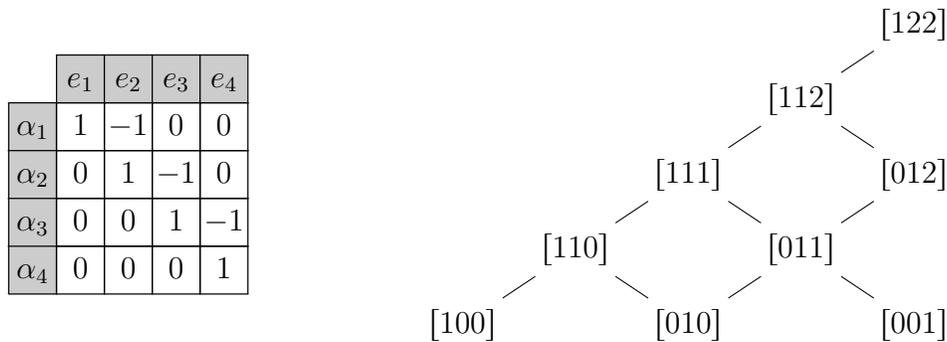
\begin{figure}[ht]
        \centering
    \begin{tikzpicture}
    \begin{scope}
        \draw (0,2) node (rts) {$\begin{ytableau}
        \none & *(light-gray) e_1 &*(light-gray)e_2 & *(light-gray)e_3 & *(light-gray)e_4 \\
        *(light-gray) \alpha_1 &1&-1&0&0 \\
        *(light-gray) \alpha_2 &0&1&-1&0 \\
        *(light-gray) \alpha_3 &0&0&1&-1 \\
        *(light-gray) \alpha_4 &0&0&0&1 
    \end{ytableau}$};
    \end{scope}
    \begin{scope}[xshift=1.75in]
        \draw (0,0) node (a1) {$[100]$};
        \draw (3,0) node (a2) {$[010]$};
        \draw (6,0) node (a3) {$[001]$};
        \draw (1.5,1) node (a12) {$[110]$};
        \draw (4.5,1) node (a23) {$[011]$};
        \draw (3,2) node (a123) {$[111]$};
        \draw (6,2) node (bc1) {$[012]$};
        \draw (4.5,3) node (bc2) {$[112]$};
        \draw (6,4) node (bc3) {$[122]$};

        \draw (a1)--(a12)--(a123);
        \draw (a2)--(a23)--(a123);
        \draw (a2)--(a12);
        \draw (a3)--(a23);
        \draw (a23)--(bc1);
        \draw (bc1)--(bc2)--(a123);
        \draw (bc2)--(bc3);
    \end{scope}
    \end{tikzpicture}
        \caption{The simple roots for B${}_4$ and positive root poset for B${}_3$}
        \label{fig:B4_roots}
    \end{figure}

These positive roots are in bijective correspondence with reflections in the type B Weyl group (i.e. $\Wn$). These reflections are precisely the transpositions in $\Wn$, given by how they act on the standard basis. Specifically, $S(\Phi^+)$ is the set of transpositions in $\Wn$, and in particular, 
\begin{equation}\label{eq:rot_trans_B}
    e_i-e_j \longleftrightarrow (i,j) \hspace{1cm} e_i+e_j \longleftrightarrow (i,\bar{j})  \hspace{1cm} e_i \longleftrightarrow (i,\bar{i}).
\end{equation}
\begin{example}\label{ex:B3_correspondence}
    In type B${}_3$, the correspondence between roots (see Figure \ref{fig:B4_roots}) and reflections in $\Wn$ are as follows.
    \begin{center} 
\begin{tabular}{|c|c|c|c|}\hline
    Root in $\Phi^+$  & Vector & Transposition in $\Wn$ & A Reduced Word \\\hline
    $[100]$ & $(1,-1,0)$ & $(1,2)$ & $s_1$ \\
    $[010]$ & $(0,1,-1)$ & $(2,3)$ & $s_2$\\
    $[001]$ & $(0,0,1)$  & $(3,\overline{3})$  & $s_3$\\
    $[110]$ & $(1,0,-1)$ & $(1,3)$ & $s_1s_2s_1$\\
    $[011]$ & $(0,1,0)$  & $(2,\overline{2})$  & $s_2s_3s_2$\\
    $[012]$ & $(0,1,1)$  & $(2,\overline{3})$  & $s_3s_2s_3$\\
    $[111]$ & $(1,0,0)$  & $(1,\overline{1})$  & $s_1s_2s_3s_2s_1$\\
    $[112]$ & $(1,0,1)$  & $(1,\overline{3})$  & $s_3s_1s_2s_1s_3$\\
    $[122]$ & $(1,1,0)$  & $(1,\overline{2})$  & $s_2s_3s_2s_1s_2s_3s_2$ \\\hline
\end{tabular}
\end{center}
Where, for example, the reflection across the vector $\alpha_1 + 2\alpha_2 + 2\alpha_3 = (1,1,0)$ sends $e_1 = (1,0,0)$ to $-e_2 = (0,-1,0)$, corresponding precisely to the transposition that switches $1$ and $\bar{2}$.
\end{example}
The type C root system is quite similar. The positive and simple roots of the type C root systems are 
\begin{equation}\label{eq:Croots}
    \Phi^+ \coloneqq \{e_i \pm e_{j} \mid 1 \leq i < j \leq n\} \cup \{2e_i \mid i \in [n]\},
\end{equation}
 and 
\[\Delta \coloneqq \{e_i - e_{i+1} \mid i \in [n-1]\} \cup \{2e_n\},\]
respectively. Usually one denotes $\alpha_i \coloneqq e_i-e_{i+1}$ and $\alpha_n \coloneqq 2e_n$, and a positive root $\alpha \in \Phi^+$ such that $\alpha  = \sum_{i=1}^n c_i\alpha_i$ as the vector $[c_1c_2\ldots c_n]$.\par
This may seem to be an insignificant change from type B, but there are several consequences. For example, in type B the action is $s_n(\alpha_{n-1}) = \alpha_{n-1} + 2\alpha_n$ and $s_{n-1}(\alpha_n) = \alpha_{n-1} + \alpha_n$, whereas in type C the action is $s_n(\alpha_{n-1}) = \alpha_{n-1}+\alpha_n$ and $s_{n-1}(\alpha_n) = 2\alpha_{n-1} + \alpha_n$.\par 

\begin{figure}[ht]
    \centering
        \begin{tikzpicture}
    \begin{scope}
        \draw (0,2) node (rts) {$ \begin{ytableau}
        \none & *(light-gray) e_1 &*(light-gray)e_2 & *(light-gray)e_3 & *(light-gray)e_4 \\
        *(light-gray) \alpha_1 &1&-1&0&0 \\
        *(light-gray) \alpha_2 &0&1&-1&0 \\
        *(light-gray) \alpha_3 &0&0&1&-1 \\
        *(light-gray) \alpha_4 &0&0&0&2 
    \end{ytableau}$};
    \end{scope}
    \begin{scope}[xshift=1.75in]
        \draw (0,0) node (a1) {$[100]$};
        \draw (3,0) node (a2) {$[010]$};
        \draw (6,0) node (a3) {$[001]$};
        \draw (1.5,1) node (a12) {$[110]$};
        \draw (4.5,1) node (a23) {$[011]$};
        \draw (3,2) node (a123) {$[111]$};
        \draw (6,2) node (bc1) {$[021]$};
        \draw (4.5,3) node (bc2) {$[121]$};
        \draw (6,4) node (bc3) {$[221]$};

        \draw (a1)--(a12)--(a123);
        \draw (a2)--(a23)--(a123);
        \draw (a2)--(a12);
        \draw (a3)--(a23);
        \draw (a23)--(bc1);
        \draw (bc1)--(bc2)--(a123);
        \draw (bc2)--(bc3);
    \end{scope}
    \end{tikzpicture}
    \caption{The simple roots for C${}_4$ and the positive root poset for C${}_3$.}
    \label{fig:C4_roots}
\end{figure}
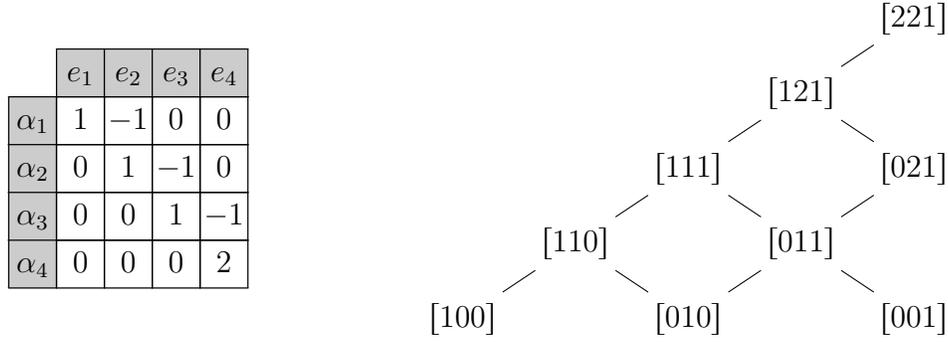

Reflections and transpositions for types B and C are identical, but the relations among them in the root poset is not. 
\begin{example}\label{ex:C3_correspondence}
    In type C${}_3$, the correspondence between roots (see Figure \ref{fig:C4_roots}) and reflections in $\Wn$ are as follows.
\begin{center} 
\begin{tabular}{|c|c|c|c|}\hline
    Root in $\Phi^+$ & Vector  & Transposition in $\Wn$ & Reflection $s_\alpha$\\\hline
    $[100]$ & $(1,-1,0)$ & $(1,2)$ & $s_1$\\
    $[010]$ & $(0,1,-1)$ & $(2,3)$ & $s_2$\\
    $[001]$ & $(0,0,2)$  & $(3,\overline{3})$  & $s_3$\\
    $[110]$ & $(1,0,-1)$ & $(1,3)$& $s_1s_2s_1$\\
    $[011]$ & $(0,1,1)$  & $(2,\overline{3})$ & $s_3s_2s_3$\\
    $[021]$ & $(0,2,0)$  & $(2,\overline{2})$ & $s_2s_3s_2$\\
    $[111]$ & $(1,0,1)$  & $(1,\overline{3})$ & $s_3s_1s_2s_1s_3$\\
    $[121]$ & $(1,1,0)$  & $(1,\overline{2})$ & $s_2s_3s_2s_1s_2s_3s_2$\\
    $[221]$ & $(2,0,0)$  & $(1,\overline{1})$ & $s_1s_2s_3s_2s_1$\\\hline
\end{tabular}
\end{center}
Where, for example, the reflection across the vector $\alpha_1 + 2\alpha_2 + \alpha_3 = (1,1,0)$ sends $e_1 = (1,0,0)$ to $-e_2 = (0,-1,0)$, corresponding precisely to the transposition that switches $1$ and $\bar{2}$.
\end{example}
There are two important takeaway from comparing Examples \ref{ex:B3_correspondence} and \ref{ex:C3_correspondence}. First is that the type B and C root posets (See Figures \ref{fig:B4_roots} and \ref{fig:C4_roots}) do \emph{not} induce the same partial order on reflections. Second, the order on transpositions induced by the root poset is \emph{not} Bruhat order. \par 
Some structure is the same. In either case the action of $\Wn$ as a real reflection group is identical. Moreover, since every element of $\Wn$ acts linearly, there is a natural action of $\Wn$ on the polynomial ring $\poly$ that is also the same in either type. 
\begin{equation}\label{eqn:wn_action_poly}
    w f\left(x_1,\ldots,x_n\right) \mapsto f\left(x_{w(1)},\ldots,x_{w(n)}\right).
\end{equation}
\begin{example}
    The following shows how $\mathfrak{W}_2$ act on some polynomials in $\C[x_1,x_2]$:
    \begin{center}
    \bgroup
    \setlength{\tabcolsep}{12pt} 
    \def\arraystretch{1.6}
    \begin{tabular}{c|ccccc}
                  & $x_1$ & $x_2$ & $x_1+x_2$ &  $x_1x_2$    & $x_1^2 + x_2^3$ \\\hline
        $[1,2]$             & $x_1$ & $x_2$ & $x_1+x_2$ &  $x_1x_2$    & $x_1^2 + x_2^3$ \\
        $[2,1]$             & $x_2$ & $x_1$ & $x_1+x_2$ &  $x_1x_2$    & $x_2^2 + x_1^3$ \\
        $[1,\bar{2}]$       & $x_1$ & $-x_2$ & $x_1-x_2$ & $-x_1x_2$   & $x_1^2 - x_2^3$ \\
        $[2,\bar{1}]$       & $x_2$ & $-x_1$ & $-x_1+x_2$  & $-x_1x_2$ & $x_2^2 - x_1^3$ \\
        $[\bar{2},1]$       & $-x_2$ & $x_1$ & $x_1-x_2$ & $-x_1x_2$   & $x_2^2 + x_1^3$ \\
        $[\bar{1},2]$       & $-x_1$ & $x_2$ & $-x_1+x_2$  & $-x_1x_2$ & $x_1^2 + x_2^3$ \\
        $[\bar{2},\bar{1}]$ & $-x_2$ & $-x_1$ & $-x_1-x_2$ & $x_1x_2$  & $x_2^2 - x_1^3$ \\
        $[\bar{1},\bar{2}]$ & $-x_1$ & $-x_2$ & $-x_1-x_2$ & $x_1x_2$  & $x_1^2 - x_2^3$ \\
    \end{tabular}
    \egroup
    \end{center}
\end{example}
 \subsection{Hessenberg spaces and inversions}
 A \emph{Hessenberg space} $H$ is a lower order ideal in the  positive root poset of a Weyl group\footnote{Those familiar with the geometry can rectify this description with the usual one by taking $-H \cup \Phi^+$.}. We insist $H$ contains the simple roots $\Delta$. 
 \begin{example}\label{ex:hessenbergs}
     Other than $H=\Delta$ there are $9$ Hessenberg spaces in C${}_3$:
\begin{center}
    \begin{tikzpicture}[scale=0.55]
    \begin{scope}[xshift=0in]
        \draw (0,0) node (a1) {$[100]$};
        \draw (3,0) node (a2) {$[010]$};
        \draw (6,0) node (a3) {$[001]$};
        \draw (1.5,1) node (a12) {$[110]$};

        \draw (a1)--(a12);
        \draw (a2)--(a12);
    \end{scope}
    \begin{scope}[xshift=4.2in]
        \draw (0,0) node (a1) {$[100]$};
        \draw (3,0) node (a2) {$[010]$};
        \draw (6,0) node (a3) {$[001]$};
        \draw (4.5,1) node (a23) {$[011]$};

        \draw (a2)--(a23);
        \draw (a3)--(a23);
    \end{scope}
         \begin{scope}[xshift=8.4in]
        \draw (0,0) node (a1) {$[100]$};
        \draw (3,0) node (a2) {$[010]$};
        \draw (6,0) node (a3) {$[001]$};
        \draw (4.5,1) node (a23) {$[011]$};
        \draw (6,2) node (bc1) {$[021]$};

        \draw (a2)--(a23);
        \draw (a3)--(a23);
        \draw (a23)--(bc1);
    \end{scope}
    \begin{scope}[xshift=0in,yshift=-1.75in]
        \draw (0,0) node (a1) {$[100]$};
        \draw (3,0) node (a2) {$[010]$};
        \draw (6,0) node (a3) {$[001]$};
        \draw (1.5,1) node (a12) {$[110]$};
        \draw (4.5,1) node (a23) {$[011]$};

        \draw (a1)--(a12);
        \draw (a2)--(a23);
        \draw (a2)--(a12);
        \draw (a3)--(a23);
    \end{scope}
     \begin{scope}[xshift=4.2in,yshift=-1.75in]
        \draw (0,0) node (a1) {$[100]$};
        \draw (3,0) node (a2) {$[010]$};
        \draw (6,0) node (a3) {$[001]$};
        \draw (1.5,1) node (a12) {$[110]$};
        \draw (4.5,1) node (a23) {$[011]$};
        \draw (3,2) node (a123) {$[111]$};

        \draw (a1)--(a12)--(a123);
        \draw (a2)--(a23)--(a123);
        \draw (a2)--(a12);
        \draw (a3)--(a23);
    \end{scope}
     \begin{scope}[xshift=8.4in,yshift=-1.75in]
        \draw (0,0) node (a1) {$[100]$};
        \draw (3,0) node (a2) {$[010]$};
        \draw (6,0) node (a3) {$[001]$};
        \draw (1.5,1) node (a12) {$[110]$};
        \draw (4.5,1) node (a23) {$[011]$};
        \draw (6,2) node (bc1) {$[021]$};

        \draw (a1)--(a12);
        \draw (a2)--(a23);
        \draw (a2)--(a12);
        \draw (a3)--(a23);
        \draw (a23)--(bc1);
    \end{scope}
    \begin{scope}[xshift=0in,yshift=-4in]
        \draw (0,0) node (a1) {$[100]$};
        \draw (3,0) node (a2) {$[010]$};
        \draw (6,0) node (a3) {$[001]$};
        \draw (1.5,1) node (a12) {$[110]$};
        \draw (4.5,1) node (a23) {$[011]$};
        \draw (3,2) node (a123) {$[111]$};
        \draw (6,2) node (bc1) {$[021]$};

        \draw (a1)--(a12)--(a123);
        \draw (a2)--(a23)--(a123);
        \draw (a2)--(a12);
        \draw (a3)--(a23);
        \draw (a23)--(bc1);
    \end{scope}
        \begin{scope}[xshift=4.2in,yshift=-4in]
        \draw (0,0) node (a1) {$[100]$};
        \draw (3,0) node (a2) {$[010]$};
        \draw (6,0) node (a3) {$[001]$};
        \draw (1.5,1) node (a12) {$[110]$};
        \draw (4.5,1) node (a23) {$[011]$};
        \draw (3,2) node (a123) {$[111]$};
        \draw (6,2) node (bc1) {$[021]$};
        \draw (4.5,3) node (bc2) {$[121]$};

        \draw (a1)--(a12)--(a123);
        \draw (a2)--(a23)--(a123);
        \draw (a2)--(a12);
        \draw (a3)--(a23);
        \draw (a23)--(bc1);
        \draw (bc1)--(bc2)--(a123);
    \end{scope}
        \begin{scope}[xshift=8.4in,yshift=-4in]
        \draw (0,0) node (a1) {$[100]$};
        \draw (3,0) node (a2) {$[010]$};
        \draw (6,0) node (a3) {$[001]$};
        \draw (1.5,1) node (a12) {$[110]$};
        \draw (4.5,1) node (a23) {$[011]$};
        \draw (3,2) node (a123) {$[111]$};
        \draw (6,2) node (bc1) {$[021]$};
        \draw (4.5,3) node (bc2) {$[121]$};
        \draw (6,4) node (bc3) {$[221]$};

        \draw (a1)--(a12)--(a123);
        \draw (a2)--(a23)--(a123);
        \draw (a2)--(a12);
        \draw (a3)--(a23);
        \draw (a23)--(bc1);
        \draw (bc1)--(bc2)--(a123);
        \draw (bc2)--(bc3);
    \end{scope}
    \end{tikzpicture}
\end{center}
 \end{example}
 \begin{definition}\label{hinv}
     For a given Hessenberg space $H$ and $w \in \Wn$, an $H$-inversion of $w$ is, equivalently:
     \begin{itemize}
         \item a root $\alpha$ in $H$ that $w$ sends to a negative root (i.e. $w(\alpha) \in \Phi^-$), or
         \item a reflection $t \in S(H)$ where $wt < w$ in Bruhat order (i.e. $\ell(wt) < \ell(w)$).
     \end{itemize}
 \end{definition}
 \begin{remark}\label{rem:hinvs_and_invs}
     If $H = \Phi^+$, then the $H$-inversions are simply the (right) inversions of $w$. On the other hand, if $H = \Delta$ then these are the (right) descents of $w$.
 \end{remark}

 \subsection{Splines and the Dot Action}\label{ssec:splines_back}
This subsection introduces the core object of study: splines on the group of signed permutations. It also explains some of the geometric objects that motivate this research, and what properties of splines one can deduce from this connection.
\begin{wrapfigure}{r}{0.5\textwidth}\vspace{-0.7cm}
        \centering
        \begin{tikzpicture}[scale=1.9]
            \draw (0,0.2) node (e) {$[1,2]$};
            \draw (-1,1) node (l1) {$[2,1]$};
            \draw (-1,2) node (l2) {$[2,\bar{1}]$};
            \draw (-1,3) node (l3) {$[\bar{1},2]$};
            \draw (1,1) node (r1) {$[1,\bar{2}]$};
            \draw (1,2) node (r2) {$[\bar{2},1]$};
            \draw (1,3) node (r3) {$[\bar{2},\bar{1}]$};
            \draw (0,3.8) node (w0) {$[\bar{1},\bar{2}]$};

            \draw (e)--(l1) node[midway,left] {\textcolor{red}{$x_1-x_2$}};
            \draw (l1)--(l2) node[midway,left] {\textcolor{red}{$x_1\;$}};
            \draw (l2)--(l3) node[midway,left] {\textcolor{red}{$x_1+x_2$}};
            \draw (l3)--(w0) node[midway,left] {\textcolor{red}{$x_2\;$}};
            \draw (e)--(r1) node[midway,right] {\textcolor{red}{$\;x_2$}};
            \draw (r2)--(r3) node[midway,right] {\textcolor{red}{$x_1$}};
            \draw (r1)--(r2) node[midway,right] {\textcolor{red}{$x_1+x_2$}};
            \draw (r3)--(w0) node[midway,right] {\textcolor{red}{$\;x_1-x_2$}};
        \end{tikzpicture}
        \caption{The labeled graph for $H = \Delta$.}
        \label{fig:W2_labels}\vspace{-1.2cm}
\end{wrapfigure}
\begin{definition}
Let $G = (V,E)$ be a simple graph and $\cl \coloneqq E \to \poly$ be a labeling of its edges by polynomials. A \emph{spline} on $G$ is an assignment $\spline{\rho} \colon V \to \poly$ such that if $(v_1,v_2) \in E$ then $\spline{\rho}(v_1)-\spline{\rho}(v_2)$ is divisible by $\cl(v_1,v_2)$.
\end{definition}

The splines in this paper are on graphs whose vertex set is $\Wn$ and whose edge set is $\{(w,ws) \mid w \in \Wn,\; s \in S(H)\}$ for a given Hessenberg space $H$. The labeling function $\cl(w,ws)$ is as follows. For every right multiplication of $w$ by a transposition $s_\alpha$, there is a corresponding left multiplication by a transposition $s_\beta$ so that $s_\beta w = ws_\alpha$. Recall that right-multiplication by a transposition switches and/or negates \emph{positions} in the signed permutation. The corresponding left multiplication is the transposition that swaps the \emph{values} (see Figure \ref{fig:W2_labels}). Then 
\begin{equation}\label{eq:labels}
\cl(w,ws) = \begin{cases}
    x_i - x_j = x_{w(p)} - x_{w(q)} &\text{if }(i,j)w = ws = w(p,q) \\
    x_i + x_j = x_{w(p)} - x_{w(q)} &\text{if }(i,\bj)w = ws = w(p,q) \\
    x_i = \abs{x_{w(j)}} &\text{if }(i,\bi)w = ws = w(j,\bj) \\
\end{cases}
\end{equation}
\begin{remark}\label{rem:Lie_labeling}
     In arbitrary Lie type, the label function $\cl(w,ws_\alpha)$ for $\alpha \in \Phi^+$ is the root $w(\alpha)$, treated as a polynomial in the usual sense (i.e. $e_i \mapsto x_i$). It is not difficult to show that this is the same labeling convention as above using the standard $\alpha_i = e_i-e_j$ and $\alpha_n = e_n$ or $2e_{n}$. Since we are concerned primarily with containment within the ideal $\llangle \cl(w,ws)\rrangle$ (and are working over a field), the ideals that these evaluations define are the same as \eqref{eq:labels} for both types B and C. \par
\end{remark}

\begin{definition}\label{def:splines}
    Let $H$ be a Hessenberg space for $\Wn$. The (graded) \emph{ring of splines} for $H$ is 
    \[ \splines{H} = \bigoplus_{i \geq 0} \splines{H}^i \coloneqq \left\{\left. \spline{\rho} \in \prod_{w \in \Wn} \poly \right| \spline{\rho}(w) - \spline{\rho}(ws) \in \llangle \cl(w,ws) \rrangle\text{ when } s \in S(H)     \right\},\]
where addition and multiplication are pointwise, and the graded piece $\splines{H}^k$ is the $\C$-vector space of splines that are homogeneous polynomials of degree $k$ on their support.
\end{definition}
\begin{remark}
    The labels used to define $\splines{H}$ in the introduction give precisely the same conditions as those here, since $\llangle x_{w(i)} - x_{w(\bar{i})} \rrangle = \llangle 2x_{w(i)} \rrangle = \llangle \abs{x_{w(i)}} \rrangle $.
\end{remark}
\begin{example}\label{ex:splines}
    The following are two splines on $\mathfrak{W}_2$, one in $\splines{\Delta}$ and one in $\splines{\Phi^+}$ (which have the same $S(H)$ in either types).
    \begin{center}\hspace{-16pt}
    \begin{tikzpicture}[scale=1.5]
    \begin{scope}
        \draw (0,0.2) node (e) {$[1,2]$};
        \draw (-1,1) node (l1) {$[2,1]$};
        \draw (-1,2) node (l2) {$[2,\bar{1}]$};
        \draw (-1,3) node (l3) {$[\bar{1},2]$};
        \draw (1,1) node (r1) {$[1,\bar{2}]$};
        \draw (1,2) node (r2) {$[\bar{2},1]$};
        \draw (1,3) node (r3) {$[\bar{2},\bar{1}]$};
        \draw (0,3.8) node (w0) {$[\bar{1},\bar{2}]$};

        \draw  (0.5,0.2) node (se) {\textcolor{blue}{$0$}};
        \draw (-1.85,1) node (sl1)  {\textcolor{blue}{$x_1-x_2$}};
        \draw (-2.0,2) node (sl2)  {\textcolor{blue}{$-x_1-x_2$}};
        \draw (-1.5,3) node (sl3)  {\textcolor{blue}{$0$}};
        
        \draw  (1.5,1) node (sr1)   {\textcolor{blue}{$0$}};
        \draw  (1.5,2) node (sr2)   {\textcolor{blue}{$0$}};
        \draw  (1.55,3) node (sr3)   {\textcolor{blue}{$x_1$}};
        
        \draw  (0.55,3.8) node (sw0) {\textcolor{blue}{$x_2$}};

        \draw (e)--(l1)--(l2)--(l3)--(w0);
        \draw (e)--(r1)--(r2)--(r3)--(w0);
    \end{scope}
    \begin{scope}[xshift=5.2cm]
        \draw (0,0.2) node (e) {$[1,2]$};
        \draw (-1.2,1) node (l1) {$[2,1]$};
        \draw (-1.2,2) node (l2) {$[2,\bar{1}]$};
        \draw (-1.2,3) node (l3) {$[\bar{1},2]$};
        \draw (1.2,1) node (r1) {$[1,\bar{2}]$};
        \draw (1.2,2) node (r2) {$[\bar{2},1]$};
        \draw (1.2,3) node (r3) {$[\bar{2},\bar{1}]$};
        \draw (0,3.8) node (w0) {$[\bar{1},\bar{2}]$};
        
        \draw  (0.7,0.2) node (se) {\textcolor{blue}{$0$}};
        \draw (-1.7,1) node (sl1)  {\textcolor{blue}{$0$}};
        \draw (-2.3,2) node (sl2)  {\textcolor{blue}{$x_1(x_1-x_2)$}};
        \draw (-2.3,3) node (sl3)  {\textcolor{blue}{$x_1(x_1-x_2)$}};
        \draw  (1.7,1) node (sr1)   {\textcolor{blue}{$0$}};
        \draw  (1.7,2) node (sr2)   {\textcolor{blue}{$0$}};
        \draw  (2.3,3) node (sr3)   {\textcolor{blue}{$x_1(x_1+x_2)$}};
        \draw  (1.15,3.8) node (sw0) {\textcolor{blue}{$x_1(x_1+x_2)$}};

        \draw (e)--(l1)--(l2)--(l3)--(w0);
        \draw (e)--(r1)--(r2)--(r3)--(w0);
        \draw (w0)--(l1)--(r2);
        \draw (l2)--(r3)--(e);
        \draw (w0)--(r1)--(l2);
        \draw (r2)--(l3)--(e);
    \end{scope}
    \end{tikzpicture}
    \end{center}
\end{example}
We now construct two sets of splines and the \emph{identity spline}, each of which are elements of $\splines{H}$ for all $H$. Let 
\begin{center} 
\begin{tabular}{lll}
    $\mathbb{1} \colon \Wn \to \poly$ &be $\mathbb{1}(w)\coloneqq 1$ & for all $w \in \Wn$,\\
    $\spline{t}_i \colon \Wn \to \poly$ &be $\spline{t}_i(w) \coloneqq x_i$ & for all $w\in \Wn$, $i\in [n]$, and\\
    $\spline{r}_i \colon \Wn \to \poly$ &be $\spline{r}_i(w) \coloneqq x_{w(i)}$ & for all $w\in \Wn$, $i\in [n]$.
\end{tabular}
\end{center}
We use $\mathbb{1}$ to denote both the identity spline and the trivial representation on $\Wn$, since as functions on $\Wn$ they are identical. It will always be clear from context which one is being referred to.
\begin{lemma}\label{lem:t_and_r}
    For any element $w \in \Wn$, transposition $s \in S(\Phi^+)$, and integer $i \in [n]$,
    \[\spline{t}_i(w) - \spline{t}_i(ws) \in \llangle \cl(w,ws)\rrangle\]
    and 
    \[\spline{r}_i(w) - \spline{r}_i(ws) \in \llangle \cl(w,ws)\rrangle.\]
\end{lemma}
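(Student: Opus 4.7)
The plan is to handle the two claims separately, with the first being essentially trivial and the second requiring a short case analysis on the type of transposition $s$. For $\spline{t}_i$, since by definition $\spline{t}_i$ is the constant function $v \mapsto x_i$ on all of $\Wn$, the difference $\spline{t}_i(w) - \spline{t}_i(ws) = 0$ lies in every ideal, and the claim is immediate.

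The content of the lemma lies in the assertion about $\spline{r}_i$. First I would fix the convention $x_{\bk} = -x_k$, which is forced by the linear action of $\Wn$ on $\poly$ from \eqref{eqn:wn_action_poly} and is what makes $x_{w(i)}$ well-defined when $w(i) < 0$. With this convention, the identity $\llangle x_{w(j)} \rrangle = \llangle \abs{x_{w(j)}} \rrangle$ holds since the two generators differ by a unit. Now the key observation is that right-multiplication by any transposition in $S(\Phi^+)$ alters $w$ in at most two positions, so $\spline{r}_i(w) - \spline{r}_i(ws) = x_{w(i)} - x_{ws(i)}$ vanishes outside of these positions; it remains to verify that at the affected positions the difference equals the edge label up to a unit.

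I would then check the three forms of transpositions in \eqref{eq:labels}. For $s = (p,q)$ with $p<q$ in $[n]$, taking $i = p$ gives $x_{w(p)} - x_{w(q)}$, which is exactly $\cl(w,ws)$; the case $i = q$ gives its negative. For $s = (p, \bar{q})$ with $p < q$ in $[n]$, one has $ws(p) = w(\bar{q}) = -w(q)$, so $i = p$ gives $x_{w(p)} - x_{-w(q)} = x_{w(p)} + x_{w(q)}$, again matching the label; the case $i = q$ is symmetric. For $s = (p, \bar{p})$, the only affected position is $i = p$, where $ws(p) = -w(p)$ yields $x_{w(p)} - (-x_{w(p)}) = 2x_{w(p)}$, which lies in $\llangle \abs{x_{w(p)}} \rrangle$.

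The only potential obstacle is bookkeeping, namely tracking signs consistently when $w(i)$ is negative. There is no deeper obstruction, and the three cases exhaust $S(\Phi^+)$, completing the argument.
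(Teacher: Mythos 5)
Your proposal is correct and follows essentially the same route as the paper: the $\spline{t}_i$ claim is dispatched by constancy, and the $\spline{r}_i$ claim by observing that either $s$ fixes $i$ (difference zero) or $s=(i,j)$ for some $j\in[\bn]$, in which case the difference is $x_{w(i)}-x_{w(j)}$, which generates the edge ideal. The paper handles your three cases in one stroke via the convention $x_{\bk}=-x_k$ and the unified label formula \eqref{eq:labels}; your explicit case split is just a more verbose rendering of the same argument.
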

\begin{proof}
    Since $\spline{t}_i$ takes the same value for every $w \in \Wn$, the difference is always zero and therefore in any ideal. \par 
    If $s(i) = i$, then $\spline{r}_i(w) - \spline{r}_i(ws) = 0$ as well. On the other hand if $s = (i,j)$ for any $j \in [\bar{n}]$, then 
    \[
    \spline{r}_i(w) - \spline{r}_i(ws) = x_{w(i)} - x_{w(j)} \in \llangle \cl(w,w(i,j))\rrangle.
    \]
\end{proof}
The ring $\splines{H}$ is an infinite-dimensional $\C$-vector space in the natural way, and can also be viewed as a finitely generated graded $\poly$-module in two ways via the following module actions:
\begin{equation}\label{eqn:left_action}
    f\left(x_1,\ldots,x_n\right)\!.\spline{\rho} = f\left(\spline{t}_1,\ldots,\spline{t}_n\right)\spline{\rho}
\end{equation}
and
\begin{equation}\label{eqn:right_action}
    f\left(x_1,\ldots,x_n\right)\!.\spline{\rho} = f\left(\spline{r}_1,\ldots,\spline{r}_n\right)\spline{\rho},
\end{equation}
where the right-hand side of both (\ref{eqn:left_action}) and (\ref{eqn:right_action}) work by substituting splines for variables in to the polynomial $f$ then multiplying as in the ring structure of $\splines{H}$. For both actions the constant $f(0,\ldots,0)$ is naturally mapped to $f(0,\ldots,0)\mathbb{1}$. Since $\splines{H}$ is a $\poly$-submodule of $\prod_{w \in \Wn} \poly$ for either module action, it is finitely generated. We call the module action (\ref{eqn:left_action}) the \emph{left action} and the module action (\ref{eqn:right_action}) the \emph{right action} of $\poly$ on $\splines{H}$.  Both the left and right actions are naturally compatible with the grading on $\splines{H}$, and $\splines{H}$ is a free module with respect to both actions (see Proposition \ref{prop:geom_properties}). \par 
\begin{example}\label{ex:poly_action}
    Let $\spline{\rho} \in \splines{H}$ and let $f(x_\bullet) = x_1^3 + x_2^2 + x_3$. \\
    The left action of $f$ on $\spline{\rho}$ evaluated at any $v \in \Wn$ is: \[ f(x_\bullet).\spline{\rho}(v) = \left[((\spline{t}_1)^3 + (\spline{t}_2)^2 + \spline{t}_3)\spline{\rho}\right](v) = (x_1^3 + x_2^2 + x_3)\spline{\rho}(v),\]
    the right action of $f$ on $\spline{\rho}$ evaluated at any $v \in \Wn$ is: \[ f(x_\bullet).\spline{\rho}(v) =\left[((\spline{r}_1)^3 + (\spline{r}_2)^2 + \spline{r}_3)\spline{\rho}\right](v) = (x_{v(1)}^3 + x_{v(2)}^2 + x_{v(3)})\spline{\rho}(v).\]
    If $v = (1,\bar{1})(2,3)$, then $(x_{v(1)}^3 + x_{v(2)}^2 + x_{v(3)}) = ((-x_{1})^3 + x_{3}^2 + x_{2})$.
\end{example}
The ring of splines has a $\Wn$-module structure \cite{tymoczko2008permutation,tymoczko2008schubertreps}. 
\begin{definition}
    Let $\spline{\rho} \in \splines{H}$. The \emph{dot action} of $\Wn$ on $\splines{H}$ is given by 
    \[
    w\cdot \spline{\rho}(v) \coloneqq w\spline{\rho}(w^{-1}v)
    \]
    for $w,v \in \Wn$.
\end{definition}
Using our standard for visualizing splines, the dot action by $w$ moves polynomials around the $\Wn$ by sending the polynomial at $v$ to $wv$ (for all $v \in \Wn$), then acts on every polynomial by $w$ as in Equation (\ref{eqn:wn_action_poly}). \begin{example}\label{ex:dot_action}
The dot action of the transposition $(1,\bar{2})$ on one of the splines from Example \ref{ex:splines} is 
\begin{center}
\begin{tikzpicture}[scale=1.5]
    \begin{scope}
        \draw (0,0.2) node (e) {$[1,2]$};
        \draw (-1,1) node (l1) {$[2,1]$};
        \draw (-1,2) node (l2) {$[2,\bar{1}]$};
        \draw (-1,3) node (l3) {$[\bar{1},2]$};
        \draw (1,1) node (r1) {$[1,\bar{2}]$};
        \draw (1,2) node (r2) {$[\bar{2},1]$};
        \draw (1,3) node (r3) {$[\bar{2},\bar{1}]$};
        \draw (0,3.8) node (w0) {$[\bar{1},\bar{2}]$};

        \draw  (0.5,0.2) node (se) {\textcolor{blue}{$0$}};
        \draw (-2.0,1) node (sl1)  {\textcolor{blue}{$x_1-x_2$}};
        \draw (-2.0,2) node (sl2)  {\textcolor{blue}{$-x_1-x_2$}};
        \draw (-1.5,3) node (sl3)  {\textcolor{blue}{$0$}};
        
        \draw  (1.5,1) node (sr1)   {\textcolor{blue}{$0$}};
        \draw  (1.5,2) node (sr2)   {\textcolor{blue}{$0$}};
        \draw  (1.55,3) node (sr3)   {\textcolor{blue}{$x_1$}};
        
        \draw  (0.55,3.8) node (sw0) {\textcolor{blue}{$x_2$}};

        \draw (e)--(l1)--(l2)--(l3)--(w0);
        \draw (e)--(r1)--(r2)--(r3)--(w0);

        \draw (-3.15,2) node (t) {$(1,\bar{2}) \;\;  \cdot$};
        \draw  (2.25,2) node (eq) {\Large $=$};
    \end{scope}
    \begin{scope}[xshift=5.1cm]
        \draw (0,0.2) node (e) {$[1,2]$};
        \draw (-1,1) node (l1) {$[2,1]$};
        \draw (-1,2) node (l2) {$[2,\bar{1}]$};
        \draw (-1,3) node (l3) {$[\bar{1},2]$};
        \draw (1,1) node (r1) {$[1,\bar{2}]$};
        \draw (1,2) node (r2) {$[\bar{2},1]$};
        \draw (1,3) node (r3) {$[\bar{2},\bar{1}]$};
        \draw (0,3.8) node (w0) {$[\bar{1},\bar{2}]$};

        \draw  (0.57,0.2) node (se) {\textcolor{blue}{$-x_2$}};
        \draw (-1.57,1) node (sl1)  {\textcolor{blue}{$-x_1$}};
        \draw (-1.5,2) node (sl2)  {\textcolor{blue}{$0$}};
        \draw (-2,3) node (sl3)  {\textcolor{blue}{$x_1+x_2$}};
        
        \draw  (1.5,1) node (sr1)   {\textcolor{blue}{$0$}};
        \draw  (1.5,2) node (sr2)   {\textcolor{blue}{$0$}};
        \draw  (1.5,3) node (sr3)   {\textcolor{blue}{$0$}};
        
        \draw  (1,3.8) node (sw0) {\textcolor{blue}{$-x_2+x_1$}};

        \draw (e)--(l1)--(l2)--(l3)--(w0);
        \draw (e)--(r1)--(r2)--(r3)--(w0);
    \end{scope}
    \end{tikzpicture}
\end{center}
\end{example}
\begin{remark} The dot action is well-defined. For all $(v_1,v_2) \in E(G)$,
\begin{align*}
w \cdot \spline{\rho}(v_1) - w \cdot \spline{\rho}(v_2) &= w\spline{\rho}(w^{-1}v_1)- w\spline{\rho}(w^{-1}v_2) \\
&= w(\spline{\rho}(w^{-1}v_1)- \spline{\rho}(w^{-1}v_2)) \\
&\in w\llangle \cl(w^{-1}v_1,w^{-1}v_2)\rrangle .
\end{align*}
If $v_1v_2^{-1} = (i,j)$ then $w^{-1}v_1v_2^{-1}w = (w^{-1}(i),w^{-1}(j))$. So 
\[w\cl(w^{-1}v_1,w^{-1}v_2) = \llangle w(t_{w^{-1}(i)}-t_{w^{-1}(j)}) \rrangle = \llangle t_i-t_j \rrangle  = \cl(v_1,v_2).\] 
Thus $w \cdot \spline{\rho}(v_1) - w \cdot \spline{\rho}(v_2) \in \llangle \cl(v_1,v_2)\rrangle$, and $w \cdot \spline{\rho} \in \splines{H}$.
\end{remark}
Finally, consider the quotients 
\begin{equation}\label{eqndef:left_quotient}
    \lqot{H} = \bigoplus_{i\geq 0} \left(\lqot{H}\right)_i \coloneqq \faktor{\splines{H}}{\llangle \spline{t}_1,\ldots,\spline{t}_n\rrangle}
\end{equation}
and
\begin{equation}\label{eqndef:right_quotient}
    \rqot{H} = \bigoplus_{i\geq 0} \left(\rqot{H}\right)_i \coloneqq \faktor{\splines{H}}{\llangle \spline{r}_1,\ldots,\spline{r}_n\rrangle}
\end{equation}
Call $\lqot{H}$ and $\rqot{H}$ the \emph{left} and \emph{right} quotients of $\splines{H}$ respectively. As $\poly$-modules, these quotients are $\faktor{\splines{H}}{I\splines{H}}$ where $I$ is the ``irrelevant ideal" $\llangle x_1,\ldots ,x_n\rrangle$ of $\polyr{n}$. Thus, $\lqot{H}$ and $\rqot{H}$ each inherit the structure of a finite-dimensional graded $\C$-vector space from the left- and right-module structures of $\splines{H}$ respectively. Any homogeneous module-generating set over $\poly$ projects to a spanning set over $\C$ in the quotient, and a minimal one projects to a basis (see Proposition \ref{prop:geom_properties}).\par 
The ideals $\llangle \spline{t}_1,\ldots,\spline{t}_n\rrangle$ and $\llangle \spline{r}_1,\ldots,\spline{r}_n\rrangle$ are homogeneous and $\Wn$-equivariant, and so the graded $\Wn$-module structure on $\splines{H}$ projects to graded $\Wn$-representations on both $\lqot{H}$ and $\rqot{H}$. Denote the character of these (graded) $\Wn$-representations as $\lrep{H}$ and $\rrep{H}$ respectively. \par 
These constructions are strongly motivated by geometry. This ring of splines is isomorphic to the equivariant cohomology ring of a corresponding regular semisimple Hessenberg variety $\Hess(X,H)$ via GKM Theory \cite{DMPS1992hessenbergvarieties}.
\begin{theorem}[\cite{DMPS1992hessenbergvarieties,GKM_theory}]
    Let $H$ be a Hessenberg space and $X$ a regular semisimple element of the Lie algebra. Then there exists a graded isomorphism
    \[ \Psi \colon H_T^*(\Hess(X,H) \to \splines{H} \]
    sending the $2i$-th equivariant cohomology $H_T^{2i}(\Hess(X,H))$ to the $i$-th graded piece $\splines{H}^i$.\par 
    Moreover, if $I=\llangle x_1,\ldots,x_n\rrangle$, then 
    \[H^*(\Hess(X,H)) \cong \faktor{\splines{H}}{I\splines{H}} = \lqot{H}\]
    as a graded vector space.
\end{theorem}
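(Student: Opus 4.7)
The plan is to verify the hypotheses of the general GKM theorem for the torus action on $\Hess(X,H)$ and then read off both statements. First, I would recall (following DMPS) that when $X$ is regular semisimple, the centralizer torus $T$ acts on $\Hess(X,H)$ with fixed-point set $\Hess(X,H)^T$ in bijection with $\Wn$: the fixed points correspond to the $T$-fixed flags in $G/B$ that satisfy the Hessenberg condition, and for a regular semisimple $X$ \emph{every} Weyl group element gives such a flag. I would then check that $\Hess(X,H)$ is smooth (a standard consequence of regular semisimplicity) and that the one-dimensional $T$-orbits, together with their closures, form the edges of a GKM graph whose vertices are $\Wn$; the edge from $w$ to $wt$ exists exactly when $t \in S(H)$, and the infinitesimal $T$-weight along that edge is $w(\alpha)$ where $t = s_\alpha$.

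Next I would invoke the Goresky--Kottwitz--MacPherson theorem. Since the fixed points are isolated, the one-dimensional orbits are classified as above, and the pairwise weights at each fixed point are pairwise non-proportional (because distinct positive roots are non-proportional in types B and C), the space $\Hess(X,H)$ is a GKM space. The GKM theorem then identifies
\[
H_T^*\bigl(\Hess(X,H)\bigr) \;\cong\; \Bigl\{ \spline{\rho} \in \prod_{w \in \Wn} H_T^*(\mathrm{pt}) \;\Big|\; \spline{\rho}(w) - \spline{\rho}(wt) \in \bigl\langle w(\alpha) \bigr\rangle \text{ for } t = s_\alpha \in S(H) \Bigr\},
\]
which, after the identification $H_T^*(\mathrm{pt}) = \poly$ and the translation between root labels $w(\alpha)$ and the edge-label conventions of \eqref{eq:labels} noted in Remark \ref{rem:Lie_labeling}, is exactly $\splines{H}$. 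The grading matches because $H_T^{2i}(\mathrm{pt})$ consists of polynomials of degree $i$, and the GKM isomorphism is a restriction-to-fixed-points map, which preserves the grading.

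For the second statement I would use equivariant formality. Since $\Hess(X,H)$ has a paving by affine cells (a Bia{\l}ynicki--Birula decomposition using a generic one-parameter subgroup of $T$), its cohomology is concentrated in even degrees and the space is equivariantly formal. Then the standard equivariant formality identity
\[
H^*\bigl(\Hess(X,H)\bigr) \;\cong\; H_T^*\bigl(\Hess(X,H)\bigr) \big/ I \cdot H_T^*\bigl(\Hess(X,H)\bigr),
\]
where $I = \langle x_1, \dots, x_n \rangle \subset H_T^*(\mathrm{pt})$ is the augmentation ideal, transports under $\Psi$ to $\splines{H}/I\splines{H} = \lqot{H}$, as required.

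The main obstacle is the identification of the GKM one-skeleton: one must check that every one-dimensional $T$-orbit in $\Hess(X,H)$ arises from multiplication by a reflection $s_\alpha$ with $\alpha \in H$ and determine the corresponding $T$-weight precisely. This is the technical heart of DMPS and requires a careful local analysis of the tangent spaces at the fixed flags. Once that input is in place, the remainder of the argument is a direct application of the two citable theorems together with the bookkeeping needed to match the root-labeling convention of Remark \ref{rem:Lie_labeling} with the formula \eqref{eq:labels}.
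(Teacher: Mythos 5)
The paper offers no proof of this theorem; it is quoted as a known result of \cite{DMPS1992hessenbergvarieties,GKM_theory}, so the only fair comparison is with the argument in those sources. Your sketch is a correct outline of exactly that standard argument: fixed points indexed by $\Wn$, smoothness, identification of the one-skeleton with edges $w \to ws_\alpha$ for $\alpha \in H$ labeled by $w(\alpha)$, pairwise non-proportionality of tangent weights, the GKM localization theorem for the first claim, and equivariant formality (via the Bia{\l}ynicki--Birula paving, which in types B and C is due to \cite{PrecupBCPaving}) for the second; you also correctly flag the one-skeleton computation as the technical heart that cannot be waved away.
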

So the left quotient $\lqot{H}$ gives the ordinary cohomology of the regular semisimple Hessenberg variety. In type A, the right quotient $\rqot{H}$ is the map to the ordinary cohomology of a different algebraic variety whose $T$-equivariant cohomology happens to be isomorphic to that of $\Hess(X,H)$ \cite{Ayzenberg2018isospectral}. There are several properties $\splines{H}$ that one may discern from well-know geometric results.
\begin{proposition}[\cite{GKM_theory,DMPS1992hessenbergvarieties,PrecupBCPaving,tymoczko2008schubertreps}]\label{prop:geom_properties}
    Let $H$ be a Hessenberg space.
    \begin{enumerate}
        \item $\splines{H}$ is a graded free module over $\poly$.
        \item The top nonzero degree of $\lqot{H}$ and $\rqot{H}$ is $|H|$, and the rank-generating function is palindromic. 
        \item There exists a homogeneous basis $\{\spline{\sigma}_w^H \mid w \in \Wn\}$ of $\splines{H}$ such that:
        \begin{enumerate}
            \item (Minimality) The evaluation $\spline{\sigma}_w^H(w)$ is the product of the labels for each $H$-inversions of $w$.
            \item (Upper Triangularity) Each element $v$ in the support of $\spline{\sigma}_w^H$ is either equal to $w$ or $\ell(v) > \ell(w)$.
        \end{enumerate}
        \item The rank-generating function is $\ds \sum_{w \in \Wn} q^{\ell_H(w)}$ where $\ell_H(w) \coloneqq \abs{\{s \in S(H) \mid ws < w\}}$.
    \end{enumerate}
    Moreover, any set of splines indexed by $\Wn$ satisfying minimality (3a) and upper triangularity (3b) is a basis for $\splines{H}$.
\end{proposition}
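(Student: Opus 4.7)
The plan is to construct an explicit Schubert-style basis from which (1), (3), and (4) follow formally, and then to obtain (2) from Poincaré duality. First I would construct, for each $w \in \Wn$, a homogeneous spline $\spline{\sigma}_w^H \in \splines{H}$ of degree $\ell_H(w)$ by induction along a linear extension of Bruhat order. Start by setting $\spline{\sigma}_w^H(v) = 0$ for all $v < w$ and $\spline{\sigma}_w^H(w) = \prod_{s \in S(H),\, ws < w} \cl(w, ws)$, a product of $\ell_H(w)$ linear forms. Then extend the definition one vertex at a time to the Bruhat-successors $v > w$, at each step choosing $\spline{\sigma}_w^H(v)$ to satisfy the divisibility conditions imposed by the already-defined neighbors of $v$; the key linear-algebra input is that the system of congruences modulo the ideals $\llangle \cl(v,vs) \rrangle$ coming from Bruhat-below neighbors can be solved compatibly, which is the standard GKM-compatibility of edge labels around 4-cycles in the graph.

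Once the family $\{\spline{\sigma}_w^H\}$ is in hand, the evaluation matrix $(\spline{\sigma}_w^H(v))$ is upper-triangular along a Bruhat linear extension, with pure-degree nonzero diagonal entries. A Nakayama-style peeling argument — given $\spline{\rho} \in \splines{H}$, subtract an appropriate $\poly$-multiple of $\spline{\sigma}_w^H$ at the lowest $w$ in $\supp(\spline{\rho})$ and iterate — simultaneously establishes $\poly$-linear independence and generation, yielding both (1) and (3). The same peeling applies to any collection satisfying the (weaker) length upper-triangularity (3a)--(3b), giving the converse claim. For (4), quotient by the irrelevant ideal in either module structure: each $\spline{\sigma}_w^H$ projects to a nonzero homogeneous class of degree $\ell_H(w)$ in $\lqot{H}$ (resp.\ $\rqot{H}$), and these classes form a $\C$-basis of the quotient, proving the rank-generating function formula. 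For (2), I would invoke the GKM isomorphism $\lqot{H} \cong H^*(\Hess(X,H))$ recalled in \S\ref{ssec:splines_back}: since $\Hess(X,H)$ is smooth projective of complex dimension $|H|$, Poincaré duality forces both the top-degree statement and palindromicity; the analogous isospectral-variety identification handles $\rqot{H}$.

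The hard part will be the inductive construction of the $\spline{\sigma}_w^H$ in the first step. In type A the argument proceeds cleanly because Bruhat covers and root-poset covers align nicely, but Examples \ref{ex:B3_correspondence} and \ref{ex:C3_correspondence} show this alignment fails in types B and C, and the short/long-root distinction separates the two types at the combinatorial level. The redeeming feature is Remark \ref{rem:Lie_labeling}: the edge-label ideal $\llangle \cl(w, ws_\alpha) \rrangle$ equals $\llangle w(\alpha) \rrangle$ in both types, so the GKM divisibility conditions are type-independent as ideals. This lets the inductive step proceed uniformly, and in particular frees the rest of the paper to use the basis without worrying about the B-versus-C distinction at the level of $\splines{H}$ itself.
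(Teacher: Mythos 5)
Your overall architecture (upper-triangular family $\Rightarrow$ peeling $\Rightarrow$ freeness, basis, and rank-generating function; Poincar\'e duality for palindromicity) is sound, and the peeling argument does correctly deliver the ``moreover'' clause once a family satisfying (3a)--(3b) exists. But the existence of that family is exactly where your proof has a genuine gap, and it is the entire content of the hard part of the proposition. Your inductive step asserts that at each new vertex $v$ the congruences $\spline{\sigma}_w^H(v) \equiv \spline{\sigma}_w^H(vs) \pmod{\llangle \cl(v,vs)\rrangle}$ coming from already-defined neighbors ``can be solved compatibly'' by appeal to ``standard GKM-compatibility of edge labels around 4-cycles.'' That compatibility condition is a statement about when a GKM-type ring computes equivariant cohomology; it does not guarantee that a solution exists \emph{in degree $\ell_H(w)$}. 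Solving $\ell_H(v)$ simultaneous congruences generically forces the degree to grow, and the minimality condition (3a) pins the degree of $\spline{\sigma}_w^H$ at $\ell_H(w)$, which can be far smaller than $\ell(w)$. For an arbitrary edge-labeled graph, flow-up classes of the prescribed degrees simply need not exist; what guarantees them here is the paving of $\Hess(X,H)$ by Hessenberg--Schubert cells (\cite{PrecupBCPaving}), which is precisely the external input the paper's proof cites for (3). A second, smaller issue: you only force $\spline{\sigma}_w^H$ to vanish on $v < w$ in Bruhat order, whereas (3b) requires vanishing at every $v \neq w$ with $\ell(v) \leq \ell(w)$, including length-incomparable elements; your induction scheme would need to be rerun by length with this stronger vanishing built in.

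By way of comparison: the paper does not attempt any of this combinatorially. It imports (1) from GKM theory, (2) from smoothness and projectivity of $\Hess(X,H)$ in complex dimension $|H|$, and (3) from the affine paving, then reads off (4) from the degrees of the resulting classes; only the ``moreover'' clause is proved by the elementary upper-triangularity argument you also use. Your instinct that Remark \ref{rem:Lie_labeling} makes the B-versus-C distinction irrelevant at the level of ideals is correct and useful, but it addresses type-uniformity, not existence. If you want a geometry-free proof, you would need to substantially expand the inductive step --- in effect reproving the paving combinatorially --- which is why the paper restricts its own hands-on construction to degree one (Sections \ref{sec:one_inversion}--\ref{sec:generators}) and notes in the remark following the proposition that freeness in the degree-one setting can be obtained without the geometry.
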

\begin{proof}
    (1) follows from being a GKM variety and (2) comes from being smooth of dimension $|H|$. (3) comes from a paving of this variety by Hessenberg-Schubert cells. (4) identifies the homogeneous degrees of each $\spline{\sigma}_w^H$. Any set satisfying (3a) and (3b) is an ``upper triangular" basis and follows from basic facts in more general spline theory.
\end{proof}
\begin{remark}
    While necessary in general degree, Proposition \ref{prop:geom_properties} is not strictly necessary for computing the degree \emph{one} piece of $\splines{H}$. In fact, when restricting to the module generated by linear and constant splines, freeness and the properties that guarantee a basis are given even in the absence of the geometry  (c.f. Lemma 3.2 in \cite{Lesnev25}).
\end{remark}
 We will construct a set of splines that satisfies the minimality and upper-triangularity conditions in Proposition \ref{prop:geom_properties} (3a-b) in degree $0$ and $1$, thereby computing a basis for the $\C$-vector space $\splines{H}^1$. This is trivial for degree $0$, since $\Delta \subseteq H$ and $e$ is the only element in $\Wn$ with no descents. \par 
 So the plan of attack is clear. First, identify all elements $w \in \Wn$ where there is precisely one $s \in S(H)$ such that $ws < w$. Then, for each such $w$ define splines in $\splines{H}^1$ that are supported at $w$ and longer elements that evaluate to $\cl(w,ws)$ at $w$. Finally, reduce with linear relations until only $\dim(\splines{H}^1)$ remain, in such a way that computing the dot action remains feasible.
 \begin{definition}\label{def:hdes}
    For a Hessenberg space $H$ and integer $i \in [n]$, the (unique) \emph{$H$-descent set} of $i$ is
    \[
    \hdes{i} \coloneqq \left\{ w \in \Wn \mid w(\alpha_i) \in \Phi^-,\; w(\beta) \in \Phi^+\text{ for all }\beta \in H \setminus \{\alpha_i\}\right\},
    \]
    or equivalently,
    \[
    \hdes{i} \coloneqq \left\{ w \in \Wn \mid ws_i < w ,\; wt > w \text{ for all }t \in S(H) \setminus \{s_i\}\right\}.
    \]
    Less formally, $\hdes{i}$ is the set of elements with a single descent at $i$ and no other $H$-inversions.
\end{definition}
Proposition \ref{prop:dim1} below is a consequence of \cite{PrecupBCPaving} (i.e. Proposition \ref{prop:geom_properties}(4)) and a simple calculation for the image of $I\splines{H}^0$ in $\splines{H}^1$.
\begin{proposition}\label{prop:dim1}
    The dimension of $\splines{H}^i$ is $\ds n + \sum_{i=1}^n\hdes{i}$.
\end{proposition}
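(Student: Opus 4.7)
The plan is to compute $\dim \splines{H}^1$ via the short exact sequence of graded $\C$-vector spaces
\[
0 \to \left(\llangle \spline{t}_1,\ldots,\spline{t}_n\rrangle\right)_1 \to \splines{H}^1 \to \left(\lqot{H}\right)_1 \to 0,
\]
so that $\dim \splines{H}^1 = \dim \left(\llangle \spline{t}_1,\ldots,\spline{t}_n\rrangle\right)_1 + \dim \left(\lqot{H}\right)_1$. The first dimension handles the ``$n$'' in the formula, and the second handles the $\sum_i |\hdes{i}|$ term via Proposition \ref{prop:geom_properties}(4).

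For the ideal part, since $\Delta \subseteq H$ the labeled graph is connected, so $\splines{H}^0 = \C\mathbb{1}$. Hence the degree-one piece of $\llangle \spline{t}_1,\ldots,\spline{t}_n\rrangle$ is precisely $\C\spline{t}_1 + \cdots + \C\spline{t}_n$. These splines are $\C$-linearly independent because evaluation at any $w \in \Wn$ sends $\spline{t}_i$ to the variable $x_i$, and $x_1,\ldots,x_n$ are independent in $\poly$. So this piece has dimension exactly $n$.

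For the quotient part, Proposition \ref{prop:geom_properties}(4) (applied to $\lqot{H}$, which has the stated rank-generating function) gives
\[
\dim \left(\lqot{H}\right)_1 = \left|\{w \in \Wn : \ell_H(w) = 1\}\right|.
\]
I would then establish the set equality $\{w : \ell_H(w) = 1\} = \bigsqcup_{i=1}^n \hdes{i}$. The inclusion $\supseteq$ is immediate from Definition \ref{def:hdes}, and the disjointness is clear. The remaining inclusion $\subseteq$ is the crux: one must show that if $w$ has a unique $H$-inversion $\alpha$, then $\alpha \in \Delta$. Suppose for contradiction that $\alpha \in H \setminus \Delta$; then by the standard fact that any non-simple positive root in a finite crystallographic root system decomposes as $\alpha = \beta + \gamma$ with $\beta,\gamma \in \Phi^+$, we have $\beta \leq \alpha$ and $\gamma \leq \alpha$ in the root poset (since $\alpha - \beta = \gamma$ and $\alpha - \gamma = \beta$ are non-negative sums of simple roots), so both $\beta,\gamma \in H$. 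But $w(\alpha) = w(\beta) + w(\gamma) \in \Phi^-$ forces at least one of $w(\beta), w(\gamma)$ to lie in $\Phi^-$, producing a second $H$-inversion and a contradiction.

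The only step with any substance is the last one, namely locating a positive-root decomposition $\alpha = \beta + \gamma$ of an arbitrary non-simple positive root in types B and C; this can either be cited as a classical fact or verified by the short case analysis of the root systems in \eqref{eq:Broots} and \eqref{eq:Croots}. Everything else is bookkeeping on the exact sequence and a direct invocation of Proposition \ref{prop:geom_properties}(4). Adding the two dimensions then yields the claimed formula $n + \sum_{i=1}^n |\hdes{i}|$.
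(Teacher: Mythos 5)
Your proof is correct and follows essentially the same route the paper sketches: Proposition \ref{prop:geom_properties}(4) accounts for the $\sum_i \abs{\hdes{i}}$ term and the degree-one image of $I\splines{H}^0$, namely $\C\spline{t}_1+\cdots+\C\spline{t}_n$, accounts for the $n$. Your decomposition argument showing that a unique $H$-inversion must be simple (so that $\{w \mid \ell_H(w)=1\} = \bigsqcup_i \hdes{i}$) correctly supplies the one step the paper leaves implicit.
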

\begin{remark}
    We can already build splines with these properties for the identity element $e$ and for simple reflections $\{s_i \mid i \in [n]\}$. The spline $\mathbb{1}$ is supported at the identity $e$ and above,  $es \not< e$ for any $s \in \Wn$, and $\mathbb{1}$ evaluates to a unit at $e$. For any $i\in [n]$ the spline $\ds \sum_{j=1}^i \left(\spline{r}_j - \spline{t}_j\right)$ has $s_i$ as the shortest element in its support and the evaluation at $s_i$ is $\cl(e,s_i)$. To readers of \cite{ayzenberg2022second,Lesnev25}, it might be surprising to see that  $\ds \sum_{j=1}^n \left(\spline{r}_j - \spline{t}_j\right) \not\equiv 0$, but is in fact supported at $s_n$.
\end{remark}

\section{Ideals of Transpositions in Types B and C}\label{sec:ideals_of_transpositions}
Consider the poset of positive roots \eqref{eq:Broots} for type B and \eqref{eq:Croots} for type C. 
\begin{lemma}\label{lem:BCequality}
    Let $H$ be a type B Hessenberg space and $H'$ be a type C Hessenberg space. If $S(H) = S(H')$ then $\splines{H} = \splines{H'}$.
\end{lemma}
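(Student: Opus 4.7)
The plan is to observe that the ring $\splines{H}$ depends only on two pieces of data: the underlying graph (with vertex set $\Wn$ and edge set determined by $S(H)$) and, for each edge, the \emph{ideal} generated by the edge label. Since $S(H) = S(H')$ by hypothesis, the two graphs coincide, so I only need to verify that the edge ideals agree.

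First I would fix an edge $(w, ws)$ where $s \in S(H) = S(H')$, and compare the labels assigned in the two types. By Remark \ref{rem:Lie_labeling}, the label at this edge is $w(\alpha)$, where $\alpha$ is the positive root such that $s = s_\alpha$. For transpositions of the form $s = (i,j)$ or $s = (i,\bar j)$ with $i \ne j$, the corresponding roots in type B and type C are identical ($e_i - e_j$ or $e_i + e_j$), so the labels agree on the nose. The only place where types B and C differ is for the ``short''/``long'' transposition $s = (i,\bar i)$: in type B the corresponding root is $e_i$, giving label $x_{w(i)}$, whereas in type C the corresponding root is $2 e_i$, giving label $2 x_{w(i)}$.

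The key point is then simply that $\llangle x_{w(i)} \rrangle = \llangle 2 x_{w(i)} \rrangle$ as ideals of $\poly = \C[x_1,\ldots,x_n]$, because $2$ is a unit in $\C$. Consequently the edge ideals $\llangle \cl(w, ws)\rrangle$ coincide in types B and C for every edge. Since the defining divisibility condition
\[
\spline{\rho}(w) - \spline{\rho}(ws) \in \llangle \cl(w, ws)\rrangle
\]
in Definition \ref{def:splines} only refers to these ideals, the two spline rings satisfy the same system of conditions and thus are equal as subsets (and hence as subrings, and as $\poly$-modules under both actions) of $\prod_{w \in \Wn} \poly$.

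There really is no serious obstacle here; the lemma is essentially a bookkeeping observation that the factor of $2$ distinguishing the type B root $e_i$ from the type C root $2 e_i$ is absorbed into the ambient field. The only thing to be careful about is making the identification of labels precise, which is exactly what Remark \ref{rem:Lie_labeling} and equation \eqref{eq:labels} set up. After the identification of ideals, equality of the spline rings is immediate from the definition.
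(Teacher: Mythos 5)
Your proof is correct and follows essentially the same route as the paper: the paper's proof likewise notes that the result is immediate from Definition \ref{def:splines}, and that under the general labeling of Remark \ref{rem:Lie_labeling} the type B and type C roots for a common reflection differ only by a scalar, so the edge ideals agree. Your write-up just makes the scalar (the factor of $2$ on $e_i$ versus $2e_i$) explicit.
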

\begin{proof}
    From Definition \ref{def:splines}, this is immediate. 
    If one insists on using the general definition for $\cl(w,ws_\alpha)$ as a weight (see Remark \ref{rem:Lie_labeling}), then consider that $s_\alpha = s_\beta$ where $\alpha$ is a type B root and $\beta$ a type C root if and only if $\alpha$ and $\beta$ differ by scalar multiplication. So the edge-ideal is the same.
\end{proof}
\begin{remark}
    We believe that the reverse direction of Lemma \ref{lem:BCequality} is also true, that if $H$ and $H'$ are corresponding Hessenberg spaces in types B and C under the natural isomorphism, then $S(H) = S(H')$ if and only if $\splines{H} = \splines{H'}$. Even stronger, we ask the following: if $S(H) \neq S(H')$, is it possible for the rank-generating functions for $\splines{H}$ and $\splines{H'}$ to be equal?
\end{remark}
\begin{example}\label{ex:BCcorrespondence}
    Consider the positive root posets for B${}_4$ and C${}_4$. Replace each root with their corresponding transposition to get the following posets.
    \begin{center}
    \begin{tikzpicture}[scale=0.9]
    \begin{scope}
\begin{scope}[xshift=0in]
        \draw (0,0) node (a1) {$(1,2)$};
        \draw (3,0) node (a2) {$(2,3)$};
        \draw (6,0) node (a3) {$(3,\bar{3})$};
        \draw (1.5,1) node (a12) {$(1,3)$};
        \draw (4.5,1) node (a23) {$(2,\bar{2})$};
        \draw (3,2) node (a123) {$(1,\bar{1})$};
        \draw (6,2) node (bc1) {$(2,\bar{3})$};
        \draw (4.5,3) node (bc2) {$(1,\bar{3})$};
        \draw (6,4) node (bc3) {$(1,\bar{2})$};

        \draw (a1)--(a12)--(a123);
        \draw (a2)--(a23)--(a123);
        \draw (a2)--(a12);
        \draw (a3)--(a23);
        \draw (a23)--(bc1);
        \draw (bc1)--(bc2)--(a123);
        \draw (bc2)--(bc3);

    \draw (3,-1) node (name) {Type B};
    \end{scope}
    \end{scope}
    \begin{scope}[xshift=3.5in]
        \draw (0,0) node (a1) {$(1,2)$};
        \draw (3,0) node (a2) {$(2,3)$};
        \draw (6,0) node (a3) {$(3,\bar{3})$};
        \draw (1.5,1) node (a12) {$(1,3)$};
        \draw (4.5,1) node (a23) {$(2,\bar{3})$};
        \draw (3,2) node (a123) {$(1,\bar{3})$};
        \draw (6,2) node (bc1) {$(2,\bar{2})$};
        \draw (4.5,3) node (bc2) {$(1,\bar{2})$};
        \draw (6,4) node (bc3) {$(1,\bar{1})$};

        \draw (a1)--(a12)--(a123);
        \draw (a2)--(a23)--(a123);
        \draw (a2)--(a12);
        \draw (a3)--(a23);
        \draw (a23)--(bc1);
        \draw (bc1)--(bc2)--(a123);
        \draw (bc2)--(bc3);

    \draw (3,-1) node (name) {Type C};
    \end{scope}
    \end{tikzpicture}
    \end{center}
    So any valid $S(H)$ for either type must correspond to a lower order ideal in either of the above posets. 
\end{example}
\begin{remark}\label{rem:not_ideals}
    It is tempting to define a unified B/C-poset on transpositions. In general, the union of sets of lower order ideals for two posets is \emph{not} the set of lower order ideals of another poset. This is already the case in $\mathfrak{W}_4$. One could certainly find a poset whose set of lower order ideals \emph{contain} all $S(H)$ in both types, but whether that construction would be combinatorially interesting (or geometrically meaningful for that matter) is unclear.
\end{remark}

\section{Simpler Conditions for \texorpdfstring{$\splines{H}^1$}{MH1}}\label{sec:trivial}
The goal of this section is to reduce the data one needs from any Hessenberg space in order to compute $\splines{H}^1$. In particular, this section reduces to the case that $H \subset \Delta \cup \{s_i(\alpha_j) \mid i \neq j\}$. \par  
Lemma \ref{lem:BCtriv} gives a condition for a root $\alpha \in \Phi^+$ to be added to $H$ without affecting $\splines{H}^1$.
\begin{lemma}\label{lem:BCtriv}
    If $H$ is a type B or type C Hessenberg space,  $\alpha \in \Phi^+$, and 
    \[
    \left\{ \beta \in \Delta \cup \{s_i(\alpha_j) \mid i \neq j\} \mid \beta < \alpha\right\} \subseteq H,  
    \]
    then $\splines{H}^1 = \splines{H \cup \alpha}^1$. 
\end{lemma}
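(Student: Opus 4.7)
The inclusion $\splines{H\cup\alpha}^1\subseteq\splines{H}^1$ is immediate from Definition \ref{def:splines}, since passing from $H$ to $H\cup\alpha$ only adds edges (and hence spline conditions) to the underlying graph. The substance of the proof is the reverse inclusion, and my plan is to obtain it by a dimension count using Proposition \ref{prop:dim1}. The formula $\dim\splines{H}^1=n+\sum_{i=1}^n |\hdes{i}|$ applies to both $H$ and $H\cup\alpha$ (only $S(\cdot)$ enters the definition), and adding the transposition $s_\alpha$ to $S(H)$ can only shrink the descent sets, so $\mathcal{D}_{H\cup\alpha}(i)\subseteq\mathcal{D}_H(i)$ for every $i$. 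Thus equality of dimensions, and with it equality of the two degree-one spline rings, reduces to the combinatorial claim
\[
\text{for every } i\in[n] \text{ and every } w\in\mathcal{D}_H(i),\ \text{one has } w(\alpha)\in\Phi^+.
\]

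I would establish this claim by strong induction on the height of $\alpha$. The base cases are precisely the roots in the ``allowed set'' $\Delta\cup\{s_j(\alpha_k)\mid j\neq k\}$: for such an $\alpha$ the hypothesis places $\alpha$ itself in $H$ (since the predecessor set, understood appropriately, is forced to contain $\alpha$), so $H\cup\alpha=H$ and there is nothing to check. For the inductive step suppose $\alpha$ lies outside the allowed set and $\alpha\in\mathrm{Inv}(w)$ for some $w\in\mathcal{D}_H(i)$. Then I invoke the standard co-closedness of $\mathrm{Inv}(w)$: for any decomposition $\alpha=\beta+\gamma$ with $\beta,\gamma\in\Phi^+$, at least one of $\beta,\gamma$ lies in $\mathrm{Inv}(w)$. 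Choosing a decomposition into strictly lower-height positive roots, the summand in $\mathrm{Inv}(w)$ falls into one of three categories: (a) a simple root $\alpha_k$ with $k\neq i$, which is a second simple $H$-inversion of $w$ and contradicts $w\in\mathcal{D}_H(i)$; (b) a non-simple element of $\Delta\cup\{s_j(\alpha_k)\mid j\neq k\}$, which lies in $H$ by hypothesis and again gives a second $H$-inversion; or (c) a root of smaller height whose allowed-set predecessors sit below $\alpha$ and are therefore in $H$, so the inductive hypothesis applies and again produces a second $H$-inversion. In every case we derive the desired contradiction.

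The main obstacle I expect is producing a \emph{useful} decomposition for every $\alpha$ in types B and C. The non-simply-laced Cartan entries $a_{n-1,n}=-2$ (type B) and $a_{n,n-1}=-2$ (type C) mean that $\{s_j(\alpha_k)\mid j\neq k\}$ contains roots such as $\alpha_{n-1}+2\alpha_n$ or $2\alpha_{n-1}+\alpha_n$ whose ``height'' in the simple-root expansion already exceeds two, so a naive induction on simple-root-sum height has to be checked against this coarser notion of allowed set. Moreover, for certain $\alpha$ (for example roots like $e_1+e_2$ in type B whose expansion carries multiplicities) one can have $\alpha-\alpha_k\notin\Phi^+$ for every simple $\alpha_k\neq\alpha_i$, so the convenient simple-root decomposition is unavailable and one must instead decompose $\alpha$ using a non-simple summand drawn from the allowed set; verifying that such a decomposition always exists and interacts correctly with the inductive hypothesis is the crux of the argument, and the combinatorics established in Section \ref{sec:ideals_of_transpositions} should be the key tool for handling types B and C uniformly.
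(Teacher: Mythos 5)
Your reduction is the same as the paper's: both inclusions follow once one shows that for every $i$ and every $w\in\hdes{i}$ one has $w(\alpha)\in\Phi^+$, so that $\hdes{i}=\cd_{H\cup\alpha}(i)$ and the dimension count of Proposition \ref{prop:dim1} forces $\splines{H}^1=\splines{H\cup\alpha}^1$. Where you diverge is in how you prove that combinatorial claim, and there your argument has a genuine gap. Your induction via biconvexity of inversion sets needs, for each $\alpha$ outside $\Delta\cup\{s_j(\alpha_k)\mid j\neq k\}$ and \emph{each} $i$, a two-term decomposition $\alpha=\beta+\gamma$ into positive roots in which \emph{neither} summand is $\alpha_i$; otherwise the summand that biconvexity places in the inversion set of $w$ may simply be $\alpha_i$, which is already an inversion of every $w\in\hdes{i}$ and yields no contradiction. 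Your case (a) quietly writes ``$\alpha_k$ with $k\neq i$'' as if this were automatic, and your closing paragraph correctly identifies the existence of such a decomposition as ``the crux'' --- but then defers it to ``the combinatorics established in Section \ref{sec:ideals_of_transpositions},'' which contains no such tool. That deferred verification is essentially the entire content of the paper's proof: the paper writes $\alpha$ explicitly as a non-negative combination of roots in $\left\{\beta\in\Delta\cup\{s_j(\alpha_k)\mid j\neq k\}\mid \beta<\alpha\right\}\setminus\{\alpha_i\}$, with a separate case for each position of $i$ relative to the support of $\alpha$ (and separate treatment of the roots $e_a-e_b$, $e_a$, $e_a+e_b$ in types B and C), precisely so that every summand is sent to $\Phi^+$ by $w$ and linearity finishes the argument. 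Until you supply the analogous case analysis producing an $\alpha_i$-avoiding decomposition whose summands are each either a simple root other than $\alpha_i$, an allowed root below $\alpha$ (hence in $H$), or a shorter root to which your inductive hypothesis applies, the proof is an outline rather than a proof.

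Two smaller points. First, your induction is otherwise set up correctly: if $\beta$ is a positive-root summand of $\alpha$ then $\beta<\alpha$ in the root order, so the allowed roots below $\beta$ lie below $\alpha$ and are in $H$, which is what case (c) needs. Second, your treatment of the base case (``the hypothesis places $\alpha$ itself in $H$'') is not what the stated hypothesis says --- it only constrains roots strictly below $\alpha$ --- though the paper's own proof makes the same implicit assumption that $\alpha$ lies outside the allowed set, which is how the lemma is actually applied in Proposition \ref{prop:BCtriv}.
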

Note that in type B,
\[
\Delta \cup \left\{s_i(\alpha_j) \mid i \neq j\right\} = \Delta \cup \{\alpha_{i} + \alpha_{i+1} \mid i \in [n-1]\} \cup \{\alpha_{n-1}+2\alpha_{n}\},
\]
and in type C, 
\[
\Delta \cup \left\{s_i(\alpha_j) \mid i \neq j\right\} = \Delta \cup \{\alpha_{i} + \alpha_{i+1} \mid i \in [n-1]\} \cup \{2\alpha_{n-1}+\alpha_{n}\}.
\]

\begin{proof}
    It follows from Definition \ref{def:splines} that $\splines{H} \supseteq \splines{H\cup\{\alpha\}}$. For any $\alpha$, it follows from  \ref{def:hdes} that $\hdes{i} \supseteq \cd_{H\cup \alpha}(i)$. We will prove that $\hdes{i} = \cd_{H\cup \alpha}(i)$, as if this holds then a basis for $\splines{H\cup\{\alpha\}}^1$ must also be a basis for $\splines{H}^1$.\par 
    The proof for types B and C are very similar. In either case, $\alpha$ can be written as a non-negative sum of roots in \[\left\{ \beta \in \Delta \cup \{s_i(\alpha_j) \mid i \neq j\} \mid \beta < \alpha\right\} \setminus \{\alpha_i\}.\]
    If $w \in \hdes{i}$ and $w(\alpha) \in \Phi^+$ then $w \in \cd_{H\cup\{\alpha\}}$. Also, if $w \in \hdes{i}$ then $w(\beta) \in \Phi^+$ for all $\beta \in H \setminus \{\alpha_i\}$, which contains the set above. It will follow that $w(\alpha) \in \Phi^+$ as $w$ is linear.\par 
    First, we verify that $\alpha$ can be written in this manned for type B in two cases. Let $w \in \hdes{i}$ be arbitrary, and note that $\alpha$ must be the sum of at least three distinct roots.\par 
    If $\alpha = \alpha_p + \cdots +\alpha_q$ for $q-p \geq 2$ then 
    \[
    \alpha = 
    \begin{cases}
        \alpha_p + \cdots + \alpha_q &\text{ for $i<p$ or $i>q$,}\\
        \alpha_p + \cdots + (\alpha_i + \alpha_{i+1}) + \cdots + \alpha_q &\text{ for $p \leq i < q$, and }\\
        \alpha_p + \cdots + (\alpha_{q-1} + \alpha_{q})&\text{ for $i = q$.}
    \end{cases}
    \]
    If $\alpha = \alpha_p + \cdots + \alpha_{q-1} + 2\alpha_q + \cdots + 2\alpha_n$ for $p < q \leq n$ and $n-p \geq 2$ then 
    \[
    \alpha = 
    \begin{cases}
        \alpha_p + \cdots + \alpha_{q-1} + 2\alpha_q + \cdots + 2\alpha_n &\text{ for $i<p$,}\\
        \alpha_p + \cdots + (\alpha_i + \alpha_{i+1}) + \cdots + \alpha_{q-1} + 2\alpha_q + \cdots + 2\alpha_n &\text{ for $p \leq i < q-1$,}\\
        \alpha_p + \cdots + (\alpha_{q-1} + \alpha_q) + \alpha_q + 2\alpha_{q+1} + \cdots + 2\alpha_n &\text{ for $p \leq i=q-1$,}\\
        \alpha_p + \cdots  + \alpha_{q-1} + 2\alpha_q + \cdots + 2(\alpha_i + \alpha_{i+1}) + \cdots + 2\alpha_n &\text{ for $q \leq i < n$, and }\\
        \alpha_p + \cdots + \alpha_{q-2} + \sum_{r=q}^{n-1}(\alpha_{r-1} + \alpha_r) + (\alpha_{n-1} +  2\alpha_n)&\text{ for $i = n$.}
    \end{cases}
    \]
    So $w(\alpha) \in \Phi^+$ and $w \in \cd_{H\cup\{\alpha\}}(i)$.\par 
    For type C the first case is identical and the second case \[\alpha = \alpha_p + \cdots + \alpha_{q-1} + 2\alpha_q + \cdots + 2\alpha_{n-1} + \alpha_n\]
    for $p \leq q < n$ and $n-p \leq 2$ is in fact easier, the only meaningful difference is that the case $i=n-1$, where 
    \[
    \alpha = \alpha_p + \cdots + \alpha_{q-1} + 2\alpha_q + \cdots + 2\alpha_{n-2} + (2\alpha_{n-1} +  \alpha_n).
    \]
    and in any case, $w(\alpha) \in \Phi^+$.\par 
    So for both types B and C it follows that $\hdes{i} = \cd_{H\cup\{\alpha\}}(i)$ for all $i \in [n]$, and so $\splines{H}^1 = \splines{H\cup\{\alpha\}}^1$.
\end{proof}
\begin{proposition}\label{prop:BCtriv}
    If $H$ is a type B or C Hessenberg space and \[H' \coloneqq H \cap \left(\Delta \cup \{s_i(\alpha_j) \mid i \neq j\}\right) \;\;\text{ then } 
    \;\;\splines{H}^1 = \splines{H'}^1.\]
\end{proposition}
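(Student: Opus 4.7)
The plan is to build $H$ up from $H'$ by adding one root at a time, applying Lemma \ref{lem:BCtriv} at each step. The subtlety is that $H'$ itself need not be a lower order ideal (since it is an intersection of a lower order ideal with a special set that is not downward closed), so Lemma \ref{lem:BCtriv} cannot be applied directly with $H'$ as the base Hessenberg space. Instead, I will go in the reverse direction: start from $H$ and strip off the roots of $H\setminus H'$ one at a time, in a linear extension of the reverse root order, so that every intermediate set is a Hessenberg space.

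Concretely, let $H\setminus H' = \{\alpha^{(1)},\alpha^{(2)},\ldots,\alpha^{(k)}\}$ be enumerated so that if $\alpha^{(j)}>\alpha^{(i)}$ in the root poset, then $j<i$. Set $H_0\coloneqq H$ and $H_i\coloneqq H_{i-1}\setminus\{\alpha^{(i)}\}$, so that $H_k = H'$. Each $\alpha^{(i)}$ is a maximal element of $H_{i-1}$ by the choice of linear extension, hence $H_i$ remains a lower order ideal in $\Phi^+$. Moreover, because $\Delta\subseteq H'$ we never remove a simple root, so $\Delta\subseteq H_i$ for all $i$, and $H_i$ is therefore a Hessenberg space.

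Now I would verify the hypothesis of Lemma \ref{lem:BCtriv} at each step, applied to the Hessenberg space $H_i$ and the root $\alpha^{(i)}$: I need
\[
\left\{\beta\in\Delta\cup\{s_i(\alpha_j)\mid i\neq j\}\mid \beta<\alpha^{(i)}\right\}\subseteq H_i.
\]
For any such $\beta$, I have $\beta<\alpha^{(i)}\in H$, so $\beta\in H$ because $H$ is a lower order ideal; combined with $\beta\in\Delta\cup\{s_i(\alpha_j)\mid i\neq j\}$ this gives $\beta\in H'$ by the definition of $H'$. Since $H'\subseteq H_i$ for every $i$, the containment holds. Lemma \ref{lem:BCtriv} therefore yields $\splines{H_i}^1 = \splines{H_i\cup\{\alpha^{(i)}\}}^1 = \splines{H_{i-1}}^1$.

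Telescoping gives $\splines{H}^1 = \splines{H_0}^1 = \splines{H_1}^1 = \cdots = \splines{H_k}^1 = \splines{H'}^1$. The only thing that requires care is the choice of enumeration ensuring each $H_i$ is a Hessenberg space; this is the step I would emphasize, since it is the reason one cannot simply apply Lemma \ref{lem:BCtriv} additively starting from $H'$. Everything else reduces to unfolding definitions.
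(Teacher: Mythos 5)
Your overall strategy is the paper's strategy: the paper's proof is exactly ``repeated application of Lemma~\ref{lem:BCtriv},'' and your telescoping plus the verification of the lemma's hypothesis (any $\beta<\alpha^{(i)}$ lying in $\Delta\cup\{s_i(\alpha_j)\mid i\neq j\}$ lands in $H$ by the ideal property, hence in $H'$ by definition, hence in every $H_i$) is correct. However, the ``subtlety'' you organize the proof around rests on a false claim, and repairing your own argument requires the very fact you deny. The set $\Delta\cup\{s_i(\alpha_j)\mid i\neq j\}$ \emph{is} downward closed in the positive root poset: by the explicit descriptions following Lemma~\ref{lem:BCtriv}, its non-simple elements are $\alpha_i+\alpha_{i+1}$ together with $\alpha_{n-1}+2\alpha_n$ (type B) or $2\alpha_{n-1}+\alpha_n$ (type C), and every positive root below any of these is again in the set. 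Consequently $H'$ is an intersection of two lower order ideals containing $\Delta$, hence is itself a Hessenberg space, and one may just as well add the roots of $H\setminus H'$ to $H'$ in increasing order.

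The place this matters in your write-up is the assertion that each $\alpha^{(i)}$ is maximal in $H_{i-1}$ ``by the choice of linear extension.'' The linear extension only rules out elements of $H\setminus H'$ lying strictly above $\alpha^{(i)}$; a priori some $\gamma\in H'$ could satisfy $\gamma>\alpha^{(i)}$, in which case deleting $\alpha^{(i)}$ would destroy the lower-order-ideal property and $H_i$ would not be a Hessenberg space, so Lemma~\ref{lem:BCtriv} could not be applied at the next step. Ruling this out is exactly the downward-closedness of $\Delta\cup\{s_i(\alpha_j)\mid i\neq j\}$: if $\gamma$ belongs to that set and $\alpha^{(i)}<\gamma$, then $\alpha^{(i)}$ belongs to it too, contradicting $\alpha^{(i)}\in H\setminus H'$. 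Once you state and check this one observation, your proof closes and is a fully detailed version of the paper's one-line argument.
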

\begin{proof}
    The set $H$ can be constructed from $H'$ by repeated application of Lemma \ref{lem:BCtriv}.
\end{proof}
So we may restrict to specifically $\Delta \subseteq H \subseteq \Delta \cup \{s_i(\alpha_j) \mid i \neq j\}$. In particular, $\splines{H}^1$ is determined entirely by the set
\[
S(H) \cap \left(\{s_is_{i+1}s_i \mid i \in [n]\} \cup \{s_ns_{n-1}s_n\}\right).
\]
Let  
\begin{equation}\label{eq:Tset}
\begin{aligned}
    t_i &\coloneqq s_is_{i+1}s_i = (i,i+2) &\longleftrightarrow \;\,\alpha_i + \alpha_{i+1}\;\;\text{ for }i\in [n-2]\hspace{0.15cm} \\
    t_{n-1} &\coloneqq s_{n-1}s_ns_{n-1} = (n-1,\overline{n-1}) &\longleftrightarrow\ \begin{cases} \alpha_{n-1}+\alpha_n & \text{ in type B} \\ 2\alpha_{n-1}+\alpha_n & \text{ in type C} \end{cases}\\
    t_n &\coloneqq s_ns_{n-1}s_n = (n-1,\bar{n})&\longleftrightarrow \begin{cases} \alpha_{n-1}+2\alpha_n & \text{ in type B} \\ \alpha_{n-1}+\alpha_n & \text{ in type C}. \end{cases}\\
    \end{aligned}
\end{equation}
\begin{corollary}\label{cor:t_transpositions}
        Let $t_i$ be as in \eqref{eq:Tset}. Then $\splines{H}^1$ is entirely determined by $\{t_i \mid i \in [n]\}\cap S(H)$, and every subset of $\{t_i \mid i \in [n]\}$ appears as $\{t_i \mid i \in [n]\}\cap S(H)$ for some type B or C Hessenberg space $H$.
\end{corollary}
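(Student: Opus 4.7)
The statement has two parts. For the first, the plan is to apply Proposition \ref{prop:BCtriv}: for any Hessenberg space $H$, we have $\splines{H}^1 = \splines{H'}^1$ where $H' = H \cap (\Delta \cup \{s_i(\alpha_j) \mid i \neq j\})$. Since $\Delta \subseteq H' \subseteq \Delta \cup \{s_i(\alpha_j) \mid i \neq j\}$, the reflection set $S(H')$ decomposes as $S(\Delta)$ together with the non-simple reflections coming from roots in $H' \setminus \Delta$, which by \eqref{eq:Tset} are exactly the elements of $\{t_i \mid i\in[n]\} \cap S(H')$. Every $t_i$ lies in $S(\Delta \cup \{s_i(\alpha_j) \mid i \neq j\})$, so $\{t_i\} \cap S(H') = \{t_i\} \cap S(H)$. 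Noting that $\splines{H'}$ depends only on $S(H')$ by Definition \ref{def:splines} (and Lemma \ref{lem:BCequality} if comparing types), it follows that $\splines{H}^1$ is entirely determined by $\{t_i\} \cap S(H)$.

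For the second part, given $T \subseteq \{t_1,\ldots,t_n\}$, I will construct an explicit Hessenberg space $H$ realizing $\{t_i\} \cap S(H) = T$. The key observation is that for $i \leq n-2$, the root $\alpha_i+\alpha_{i+1}$ associated to $t_i$ sits directly above only the simple roots $\alpha_i$ and $\alpha_{i+1}$ in either root poset, so including $t_i$ in $T$ forces no other $t_j$'s into $S(H)$. The only real issue is the interaction of $t_{n-1}$ and $t_n$: in type B, $\alpha_{n-1}+\alpha_n < \alpha_{n-1}+2\alpha_n$, so the root for $t_n$ forces the root for $t_{n-1}$; in type C, $\alpha_{n-1}+\alpha_n < 2\alpha_{n-1}+\alpha_n$, so the root for $t_{n-1}$ forces the root for $t_n$. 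These two dependencies run in opposite directions, so between the two types every subset $T$ is realizable: use type B unless $t_n \in T$ and $t_{n-1} \notin T$, in which case use type C. Take $H$ to be the smallest lower order ideal in the chosen positive root poset containing $\Delta$ and the roots corresponding to the elements of $T$; by the dependency analysis no unwanted $t_j$ is forced into $S(H)$.

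The main obstacle, such as it is, is simply tracking the root-poset relations at the $n-1,n$ end of the Dynkin diagram to confirm these are the only dependencies. The substantive content of the corollary is already carried by Proposition \ref{prop:BCtriv}; what remains is the clean packaging above together with a short combinatorial existence argument using the B/C duality.
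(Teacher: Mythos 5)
Your proof is correct and follows essentially the route the paper intends: the first part is exactly Proposition \ref{prop:BCtriv} plus the observation that $\splines{H'}$ depends only on $S(H')$, and your case analysis of the B/C root-poset dependencies between $t_{n-1}$ and $t_n$ (forcing in opposite directions in the two types) is accurate and consistent with the paper's later remark that $t_n\in S(H)$ forces $t_{n-1}\in S(H)$ in type B and conversely in type C. The only difference is that you spell out the realizability construction explicitly, whereas the paper leaves that part unproved.
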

\begin{remark}
    Another consequence of Proposition \ref{prop:BCtriv} is that if $\Delta \cup \{s_i(\alpha_j) \mid i \neq j\} \subseteq H$, then $\splines{H}^1 = \splines{\Phi^+}^1$. By GKM Theory, $\splines{\Phi^+}^1$ is the second equivariant cohomology of the full flag variety $G/B$. In other types, a similar calculation is possible. Specifically, $\splines{H}^1 = \splines{\Phi^+}^1$ whenever $H$ contains the following positive root ideals:
    \begin{enumerate}
        \item[A${}_n$:] $\Delta \sqcup \{\alpha_{i} + \alpha_{i+1} \mid i \in [n-1]\}$
        \item[B${}_n$:] $\Delta \sqcup \{\alpha_{i} + \alpha_{i+1} \mid i \in [n-1]\} \sqcup\{\alpha_{n-1} + 2\alpha_{n}\}$
        \item[C${}_n$:] $\Delta \sqcup \{\alpha_{i} + \alpha_{i+1} \mid i \in [n-1]\} \sqcup\{2\alpha_{n-1} + \alpha_{n}\}$
        \item[D${}_n$:] $\Delta \sqcup \{\alpha_{i} + \alpha_{i+1} \mid i \in [n-1]\} \sqcup\{2\alpha_{n-1} + \alpha_{n}\}$.
    \end{enumerate}
    In addition, for the exceptional types we may compute case-by-case. We display the order ideal generators for brevity.
    \begin{enumerate}
        \item[E${}_6$:] $\{\alpha_1+\alpha_3,\alpha_2+\alpha_4,\alpha_3+\alpha_4,\alpha_4+\alpha_5,\alpha_5+\alpha_6 \}$
        \item[E${}_7$:] $\{\alpha_1+\alpha_3,\alpha_2+\alpha_4,\alpha_3+\alpha_4,\alpha_4+\alpha_5,\alpha_5+\alpha_6,\alpha_6+\alpha_7 \}$
        \item[E${}_8$:] $\{\alpha_1+\alpha_3,\alpha_2+\alpha_4,\alpha_3+\alpha_4,\alpha_4+\alpha_5,\alpha_5+\alpha_6,\alpha_6+\alpha_7 ,\alpha_7+\alpha_8  \}$
        \item[F${}_4$:]$\{\alpha_1+\alpha_2,\alpha_2+2\alpha_3,\alpha_3+\alpha_4\}$
    \item[G${}_2$:] $\{3\alpha_1+\alpha_2\}$.
    \end{enumerate} 
\end{remark}

\section{Signed Permutations with a singe H-inversion}\label{sec:one_inversion}
This section explicitly determines the sets $\hdes{i}$ of elements in $\Wn$ whose unique $H$-inversions are the descent at $i$ from the set  $\{t_i \mid i \in [n]\}\cap S(H)$.
\begin{lemma}\label{lem:subword_endings}
    Say $w \in \Wn$ is length at least $2$ and $i \in [n]$. The following are equivalent:
    \begin{enumerate}
        \item Every reduced word for $w$ ends in $s_i$.
        \item $w \in \Wi \setminus \{e\}$.
        \item Every subword of $w$ ends in $s_{i+1}s_i$ or $s_{i-1}s_i$.
    \end{enumerate}
\end{lemma}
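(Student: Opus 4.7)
The equivalence $(1) \Leftrightarrow (2)$ is a standard fact about finite Coxeter groups that I would recall explicitly. Namely, the final letter of any reduced word $s_{i_1}\cdots s_{i_k}$ for $w$ is always a right descent, because $ws_{i_k} = s_{i_1}\cdots s_{i_{k-1}}$ has strictly smaller length; conversely, if $s_j$ is any right descent of $w$, then appending $s_j$ to any reduced word for $ws_j$ gives a reduced word for $w$ ending in $s_j$. Thus condition (1) is equivalent to saying $s_i$ is the \emph{unique} right descent of $w$, which by the defining property of shortest coset representatives is exactly the statement $w\in\Wi\setminus\{e\}$ (the exclusion of $e$ being needed so that $w$ has at least one right descent at all, which is guaranteed by $\ell(w)\geq 2$).

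For $(1)\Rightarrow(3)$, I would take an arbitrary reduced word for $w$, which by (1) ends in $s_i$, and examine the second-to-last letter $s_j$ (which exists since $\ell(w)\geq 2$). The letter $s_j$ cannot equal $s_i$, since $s_is_i = e$ would contradict reducedness. If $|j-i|>1$, then by the commuting relation in the $\Wn$ presentation from Section~\ref{ssec:signedperms}, we may swap $s_j$ and $s_i$ to obtain a new reduced word for $w$ whose final letter is $s_j\neq s_i$, contradicting (1). Hence $|j-i|=1$, i.e.\ $j\in\{i-1,i+1\}$, and every reduced word for $w$ indeed ends in either $s_{i-1}s_i$ or $s_{i+1}s_i$ (with the caveat that for $i=1$ or $i=n$, only one of these two options is available since the other index is out of range). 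The implication $(3)\Rightarrow(1)$ is immediate, since a word ending in $s_{i\pm 1}s_i$ in particular ends in $s_i$.

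There is no real obstacle here; the argument uses only the commutation relation $s_is_j=s_js_i$ for $|i-j|>1$, which is common to types B and C, so the proof is type-uniform and does not require separately invoking the braid relation $s_{n-1}s_ns_{n-1}s_n=s_ns_{n-1}s_ns_{n-1}$. The only point deserving a brief remark is the boundary behavior at $i=1$ and $i=n$, which I would address with a one-line comment noting that the forbidden commutation argument still forces $j$ to be the unique neighbor of $i$ in $\{1,\ldots,n\}$.
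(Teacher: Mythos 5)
Your proof is correct and follows essentially the same route as the paper's: the equivalence of (1) and (2) via the characterization of shortest coset representatives by right descents, and the equivalence of (1) and (3) via the commutation relation $s_is_j=s_js_i$ for $\abs{i-j}>1$. You simply spell out the details (and the boundary cases $i=1$, $i=n$) that the paper's two-line proof leaves implicit.
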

\begin{proof}
    Equivalence of (1) and (2) is the definition of a minimal coset representative. Since $s_is_j = s_js_i$ if and only if $\abs{i-j} \geq 2$, (3) is equivalent to (1).
\end{proof}
\begin{lemma}\label{lem:t_positivity_from_s}
For $i \in [n-1]$, if  $ws_i > w$ and $ws_{i+1} > w$ then $wt_i > w$. If $i={n-1}$, then  $wt_n > w$ as well. 
\end{lemma}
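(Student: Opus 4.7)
The plan is to reduce this to the standard root-theoretic criterion for Bruhat/length comparison: for any positive root $\beta$ and any $w \in \Wn$, we have $ws_\beta > w$ (equivalently $\ell(ws_\beta) > \ell(w)$) if and only if $w(\beta) \in \Phi^+$. This is the content implicit in Definition \ref{hinv} and Remark \ref{rem:hinvs_and_invs}, and is standard for any Coxeter group (see Humphreys or Bj\"orner--Brenti). Once this is in hand, the lemma becomes a purely root-theoretic statement: if $w$ sends $\alpha_i$ and $\alpha_{i+1}$ to positive roots, then it also sends the positive root corresponding to $t_i$ (respectively $t_n$) to a positive root.

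First I would identify the positive root $\beta$ with $s_\beta = t_i$ using the correspondence from \S \ref{ssec:rootsystems} together with the explicit list \eqref{eq:Tset}. For $i \in [n-2]$ this gives $\beta = \alpha_i + \alpha_{i+1}$ in both types; for $i=n-1$, $t_{n-1}$ corresponds to $\alpha_{n-1}+\alpha_n$ in type B and to $2\alpha_{n-1}+\alpha_n$ in type C; and $t_n$ corresponds to $\alpha_{n-1}+2\alpha_n$ in type B and to $\alpha_{n-1}+\alpha_n$ in type C. In each case $\beta = c_1\alpha_i + c_2\alpha_{i+1}$ (with indices $n-1,n$ in the last clause) for positive integers $c_1,c_2$.

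The main step is then a one-line application of linearity. By the criterion above, the hypothesis gives $w(\alpha_i), w(\alpha_{i+1}) \in \Phi^+$, so each expands with non-negative integer coefficients in the simple-root basis $\Delta$. Hence $w(\beta) = c_1 w(\alpha_i) + c_2 w(\alpha_{i+1})$ also has all non-negative coefficients in $\Delta$, with at least one coefficient strictly positive. Since $w$ permutes $\Phi$, the vector $w(\beta)$ is itself a root; a root whose $\Delta$-expansion is non-negative must lie in $\Phi^+$. Applying the criterion in the other direction yields $wt_i > w$, and the extra clause $wt_n > w$ when $i=n-1$ is handled by exactly the same computation with the appropriate $\beta$ from the list above.

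There is no real obstacle here; the only care needed is keeping track of the two different expressions for $t_{n-1}$ and $t_n$ in types B and C, so that the same linear argument applies uniformly. If one wanted to avoid invoking the root-theoretic criterion, one could alternatively argue directly from reduced words using Lemma \ref{lem:subword_endings}, but the root-theoretic route is cleaner and works identically in both Lie types.
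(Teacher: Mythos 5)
Your proposal is correct and is essentially the paper's own argument: the paper likewise observes that the root corresponding to $t_i$ (or $t_n$) is a non-negative combination of $\alpha_i$ and $\alpha_{i+1}$, so that $w$ sending both simple roots to $\Phi^+$ forces it to send that root to $\Phi^+$ as well. You merely spell out the standard criterion $ws_\beta > w \iff w(\beta) \in \Phi^+$ and the type B/C bookkeeping in more detail than the paper does.
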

\begin{proof}
    In both types B and C the root associated to $t_i$ given in \eqref{eq:Tset} is a non-negative sum of $\alpha_i$ and $\alpha_{i+1}$. Since neither simple root is sent to $\Phi^-$ by $w$, neither is the root associated to $t_i$. 
\end{proof}
\begin{lemma}\label{lem:subword_partial_endings}
    Let $i \in [n-1]$. Every reduced word for a signed permutation $w$ ends in $s_{i+1}s_{i}$ if and only if \[
    w = s_j \cdots s_i \text{ for }j \geq i \;\; or \;\;w=s_j\cdots s_n \cdots s_i \text{ for } j \in [n-1].
    \]
    Every reduced word for $w$ ends in $s_{i-1}s_i$ if and only if 
    \[w = s_j \cdots s_i \text{ for } j < i.\]
\end{lemma}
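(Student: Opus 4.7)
The plan is to use Lemma~\ref{lem:subword_endings} twice to convert the condition ``every reduced word ends in $s_{i+1}s_i$'' into the conjunction $w \in \Wi \setminus \{e\}$ together with $ws_i \in \mathfrak{W}^{i+1} \setminus \{e\}$, and then carry out a combinatorial classification via one-line notation; the $s_{i-1}s_i$ case reduces similarly to $w \in \Wi \setminus \{e\}$ with $ws_i \in \mathfrak{W}^{i-1} \setminus \{e\}$.

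For the $(\Leftarrow)$ direction I would verify by inspection that each listed product is the unique reduced word for the element it represents. In each of the three product forms, consecutive simple reflections have indices differing by exactly $1$, which rules out any commuting move (these require indices differing by at least $2$). The only braid moves available in $\Wn$ are the 3-braids $s_ks_{k+1}s_k = s_{k+1}s_ks_{k+1}$ for $k \leq n-2$ and the 4-braid $s_{n-1}s_ns_{n-1}s_n = s_ns_{n-1}s_ns_{n-1}$. The decreasing words $s_j \cdots s_i$ and increasing words $s_j \cdots s_i$ have strictly monotonic indices and so contain no length-three subword of 3-braid form. The up-down word $s_j \cdots s_n \cdots s_i$ has a unique peak $s_{n-1}s_ns_{n-1}$, which has $k = n-1$ and is therefore not governed by the 3-braid, and $s_n$ appears only once, so the 4-braid pattern cannot occur. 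Consequently, each listed expression is the unique reduced word, and its last two letters are indeed $s_{i+1}s_i$ (respectively $s_{i-1}s_i$) whenever the word has length at least two.

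For the $(\Rightarrow)$ direction the main combinatorial input is the following description of $\Wi$ via one-line notation. The condition that $s_j$ is not a right descent of $w$ for each $j \neq i$ forces the entries $w(i+1), w(i+2), \ldots, w(n)$ to be positive and strictly increasing, and forces the entries $w(1), \ldots, w(i)$ to consist of a strictly increasing block of positives possibly followed by a block of negatives that is strictly increasing as integers (equivalently, strictly decreasing in absolute value). Imposing additionally that $ws_i \in \mathfrak{W}^{i+1} \setminus \{e\}$ swaps positions $i$ and $i+1$ and demands the same structural form on $ws_i$; this forces an interlacing between the entries near position $i$, and after a short case split on the sign of $w(i)$ the entire one-line notation is determined by $w(i)$ alone. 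Identifying each case with the listed products gives: $w(i) = m \in \{i+2, \ldots, n\}$ recovers the decreasing word $s_{m-1}s_{m-2}\cdots s_i$; $w(i) = -m$ for $m \in [n-1]$ recovers the up-down word $s_m \cdots s_n \cdots s_i$; and $w(i) = -n$ recovers the decreasing word $s_n \cdots s_i$. The second characterization follows by the same method: the condition $ws_i \in \mathfrak{W}^{i-1} \setminus \{e\}$ forces $w(i) > 0$ (eliminating the type B/C specific negative cases) and an interlacing condition that determines $w$ uniquely from the single parameter $j \in \{1, \ldots, i-1\}$, yielding $w = s_j s_{j+1} \cdots s_i$.

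The main obstacle is the combinatorial bookkeeping in the $(\Rightarrow)$ direction. Translating the root-positivity conditions $w(\alpha_j) \in \Phi^+$ into constraints on individual entries of $w$ requires careful case analysis of the signs of $w(j)$ and $w(j+1)$, and the boundary situations (such as $j = n$ in the first characterization, where the up-down word collapses to the purely decreasing $s_n \cdots s_i$, or $i = n-1$, where the ``second block'' shrinks to a single entry) require separate but routine checks. No conceptually novel ideas are needed beyond this bookkeeping.
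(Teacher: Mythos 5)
Your proof is correct, but the forward direction takes a genuinely different route from the paper. The paper proves the backward direction the same way you do (the listed words admit no braid or commuting moves, hence are the unique reduced words of their elements), but for the forward direction it runs an induction on $\ell(w)$: writing $w = s_k\sigma > \sigma$, it applies the inductive hypothesis to $\sigma$ and shows that any left factor $s_k$ taking $w$ outside the listed families produces, via a commuting or braid move, a reduced word with a final subword violating the required ending. You instead apply Lemma~\ref{lem:subword_endings} twice to convert ``every reduced word ends in $s_{i+1}s_i$'' into the pair of conditions $w \in \Wi\setminus\{e\}$ and $ws_i \in \mathfrak{W}^{i+1}\setminus\{e\}$ (and analogously with $\mathfrak{W}^{i-1}$), which is a valid equivalence since appending/stripping the forced final $s_i$ gives a length-preserving bijection between reduced words of $w$ and of $ws_i$; you then classify the solutions in one-line notation. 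I checked the classification: the two coset conditions force $w(1) < \cdots < w(i-1) < w(i+1) < \cdots < w(n)$ to be the positive values $[n]\setminus\{|w(i)|\}$ in increasing order, so $w$ is indeed determined by $w(i)$, and the resulting elements match the listed words exactly (with $w = s_i$ correctly excluded by $ws_i \neq e$). Your description of $\Wi$ in one-line notation is also correct, provided one uses the right-descent order $1 \prec \cdots \prec n \prec \bar{n} \prec \cdots \prec \bar{1}$ rather than the integer order --- your phrasing (``positives increasing, then negatives decreasing in absolute value'') shows you have this right, but it is the one place where a careless reader could go wrong. Your approach buys a cleaner, induction-free argument and as a byproduct re-derives the one-line notations the paper records after Example~\ref{ex:hdes_set}; the paper's approach stays entirely inside word combinatorics and needs no descent criterion for signed permutations.
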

\begin{proof}
    The words $s_j \cdots s_i$ and $s_j \cdots s_n \cdots s_i$ have no braid or commuting moves, and so these are the only reduced words for these elements of $\Wn$. This gives the backwards direction. \par 
    Assume all reduced words for $w$ end in $s_{i+1}s_i$ (resp. $s_{i-1}s_i$). Then every final subword of $w$ (that isn't simply $s_i$) does as well. \par 
    Proceed by induction on the length of $w$. If $w$ is length $2$, the claim is true and $w = s_{i+1}s_i$ (resp. $w=s_{i-1}s_i$). Say that $w = s_k\sigma > \sigma$, that every reduced word for $w$ ends in $s_{i+1}s_i$ (resp. $s_{i-1}s_i$), and so by induction $\sigma$ takes one of the forms above. \par
    Now consider $s_k$ so that $w$ is not of the form above.
    \begin{itemize}
        \item If $\sigma = s_j \cdots s_i$ for $j < i$ and $k \neq j-1$, then either $s_k$ commutes with $s_j$ or one can perform the braid move $s_{j+1}s_js_{j+1} = s_js_{j+1}s_j$. 
        \item If $\sigma = s_j \cdots s_i$ for $i<j<n$ and $k \neq j+1$, then either $s_k$ commutes with $s_j$ or one can perform the braid move $s_{j-1}s_js_{j-1} = s_js_{j-1}s_j$.
        \item If $\sigma = s_n \cdots s_i$ and $k \neq n-1$, then $s_k$ commutes with $s_n$. 
        \item  If $\sigma = s_j \cdots s_n \cdots s_i$ for $j \in [n-1]$ and $k \neq j-1$, then either $s_k$ commutes with $s_j$ or one can perform the braid move $s_{j+1}s_js_{j+1} = s_js_{j+1}s_j$.
    \end{itemize}
    In any case, there exists a reduced word of $s_k\sigma$ with a (strict) final subword $v$ such that 
    \[v \notin \{s_j \cdots s_i \mid j \geq i\} \cup \{s_j\cdots s_n \cdots s_i \mid j \in [n-1]\}\]
    or, respectively, 
    \[v \notin \{s_j \cdots s_i \mid j \leq i\} .\]
    But the length of $v$ is at most $\ell(\sigma)$ and $v$ does not satisfy the claim, and so a reduced word for $v$ does not end in $s_{i+1}s_i$ (resp. $s_{i-1}s_i$). Since $v$ is a final subword of $s_k\sigma$, there is a reduced word for $s_k\sigma$ that does not end in $s_{i+1}s_i$ (resp. $s_{i-1}s_i$). In particular, this contradicts the induction, and so we have the claim.
\end{proof}

\begin{proposition}\label{prop:hdes_i}
    Let $i \in [n-2]$. Then 
    \[
    \hdes{i} = \begin{cases}
        \{s_i\} &\text{if }\{t_{i-1},t_{i}\} \subset S(H) \\
        \{s_j\cdots s_i \mid j \leq i\} &\text{if }\{t_{i-1},t_{i}\} \cap S(H) = \{t_{i}\} \\
        \{s_j\cdots s_i \mid j \geq i\} \cup \{s_j\cdots s_n\cdots s_i \mid j \in [n-1]\} &\text{if } \{t_{i-1},t_{i}\} \cap S(H) = \{t_{i-1}\} \\
        \Wi \setminus \{e\} &\text{if } \{t_{i-1},t_{i}\} \cap S(H) = \emptyset.
    \end{cases}
    \]
\end{proposition}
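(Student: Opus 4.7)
My plan is to reduce the problem to studying $\Wi \setminus \{e\}$, identify which non-simple transpositions can obstruct membership in $\hdes{i}$, and then characterize these obstructions via the subword structure of reduced words supplied by Lemmas \ref{lem:subword_endings} and \ref{lem:subword_partial_endings}. Since $\Delta \subseteq H$, any $w \in \hdes{i}$ must satisfy $ws_j > w$ for every $j \neq i$, so $w \in \Wi$, and $w \neq e$ since $e$ has no descents; conversely, every $w \in \Wi \setminus \{e\}$ has $s_i$ as its unique simple descent. For such $w$ with $i \in [n-2]$, Lemma \ref{lem:t_positivity_from_s} gives $wt_j > w$ for every $j \notin \{i-1, i\}$: the hypotheses $ws_j, ws_{j+1} > w$ hold for $j \in [n-2]\setminus\{i-1,i\}$, and the condition $i \leq n-2$ ensures $ws_{n-1},\, ws_n > w$, so the second part of that lemma handles $t_{n-1}$ and $t_n$ as well. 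Thus $w \in \hdes{i}$ if and only if $w \in \Wi \setminus \{e\}$ and $wt > w$ for each $t \in S(H) \cap \{t_{i-1}, t_i\}$.

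The heart of the argument is the following subword characterization for $w \in \Wi \setminus \{e, s_i\}$: $wt_i < w$ if and only if some reduced word for $w$ ends in $s_{i+1}s_i$, and analogously $wt_{i-1} < w$ if and only if some reduced word ends in $s_{i-1}s_i$. The forward direction is a short braid-relation computation: if $w = u \cdot s_{i+1}s_i$ is reduced, then $s_{i+1}s_i \cdot s_is_{i+1}s_i = s_i$ yields $wt_i = us_i$ of length $\ell(w)-1$; the mirror argument gives the $t_{i-1}$ statement. For the converse, Lemma \ref{lem:subword_endings} forces every reduced word for $w$ to end in either $s_{i+1}s_i$ or $s_{i-1}s_i$, so negating ``some ends in $s_{i+1}s_i$'' means every reduced word ends in $s_{i-1}s_i$; by Lemma \ref{lem:subword_partial_endings} this characterizes $w$ as $s_j \cdots s_i$ with $j < i$. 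A direct computation then gives $wt_i = s_j \cdots s_{i-1} s_{i+1} s_i = s_{i+1} w$ (commuting $s_{i+1}$ leftward past $s_j, \ldots, s_{i-1}$), and checking $w^{-1}(\alpha_{i+1}) = e_i - e_{i+2} > 0$ shows $s_{i+1}$ is not a left descent of $w$, so $\ell(wt_i) = \ell(w)+1$; the analogous computation with $\alpha_{i-1}$ handles $t_{i-1}$.

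Finally, $w = s_i$ satisfies $wt_i, wt_{i-1} > w$ trivially. Combining the characterization with Lemma \ref{lem:subword_partial_endings}, the set of $w \in \Wi \setminus \{e\}$ with $wt_i > w$ is exactly $\{s_j \cdots s_i : j \leq i\}$, and the set with $wt_{i-1} > w$ is exactly $\{s_j \cdots s_i : j \geq i\} \cup \{s_j \cdots s_n \cdots s_i : j \in [n-1]\}$, with these families intersecting only in $\{s_i\}$. Intersecting according to which of $t_{i-1}, t_i$ lie in $S(H)$ produces the four stated cases. The principal technical obstacle is the converse direction of the subword characterization: verifying that $w = s_j \cdots s_i$ with $j < i$ actually has $wt_i > w$. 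This turns on showing $s_{i+1}$ is not a left descent of $w$, which is delicate but reduces to the self-contained simple-root calculation above once one works on the root-system side rather than trying to read descents from signed one-line notation directly.
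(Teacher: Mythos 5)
Your argument follows the paper's proof in all essentials: both reduce to $\Wi\setminus\{e\}$ via Lemma \ref{lem:subword_endings}, dispose of every $t_j$ with $j\notin\{i-1,i\}$ via Lemma \ref{lem:t_positivity_from_s}, and combine the identity $s_{i+1}s_it_i=s_i=s_{i-1}s_it_{i-1}$ with Lemma \ref{lem:subword_partial_endings} to produce the four cases. You are in fact more explicit than the paper on the reverse containment (verifying $wt_i>w$ for $w=s_j\cdots s_i$ with $j<i$ by a length/root computation, where the paper only asserts it); the one caveat is that for the family $s_j\cdots s_n\cdots s_i$ with $j<i$ the reflection $t_{i-1}$ pulls out on the left as $(w(i-1),w(i+1))=(i,i+1)=s_i$ rather than $s_{i-1}$, so there one should check $w(\alpha_{i-1}+\alpha_i)\in\Phi^+$ directly instead of commuting $s_{i-1}$ leftward --- the root-side calculation you advocate at the end does exactly this and closes the point.
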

\begin{proof}
    Let $w \in \hdes{i}$. By Lemma \ref{lem:subword_endings}, $\hdes{i} \subseteq \Wi \setminus e$, and either $w=s_i$ or every reduced word for $w$ ends in $s_{i+1}s_i$ or $s_{i-1}s_i$. It is quick to check that $s_{i+1}s_it_i = s_{i} = s_{i-1}s_it_{i-1}$. Lemma \ref{lem:subword_partial_endings} thus gives the containment 
    \[
    \hdes{i} \subseteq \begin{cases}
        \{s_i\} &\text{if }\{t_{i-1},t_{i}\} \subset S(H) \\
        \{s_j\cdots s_i \mid j \leq i\} &\text{if }\{t_{i-1},t_{i}\} \cap S(H) = \{t_{i}\} \\
        \{s_j\cdots s_i \mid j \geq i\} \cup \{s_j\cdots s_n\cdots s_i \mid j \in [n-1]\} &\text{if } \{t_{i-1},t_{i}\} \cap S(H) = \{t_{i-1}\} \\
        \Wi \setminus \{e\} &\text{if } \{t_{i-1},t_{i}\} \cap S(H) = \emptyset.
    \end{cases}
    \]
    Since $s_i$ is the only descent, Lemma \ref{lem:t_positivity_from_s} gives that it is sufficient to check $wt_{i-1}$ and $wt_i$ for containment within $\hdes{i}$, and so the reverse containment is also true.
\end{proof}
\begin{proposition}\label{prop:hdes_n1}
    \[\hdes{n-1} = \begin{cases}
        \{s_{n-1}\} & \text{if } \{t_{n-2},t_{n-1}\} \subset S(H)\\
        \{s_j \cdots s_{n-1} \mid j \in [n-1]\}&\text{if } \{t_{n-2},t_{n-1}\} \cap S(H) = \{t_{n-1}\} \\ 
        \{s_{n}s_{n-1}, s_{n-1}\} &\text{if } \{t_{n-2},t_{n-1},t_n\} \cap S(H) = \{t_{n-2},t_n\} \\
        \{s_{n-1}\} \cup \{s_j\cdots s_ns_{n-1} \mid j \in [n]\} &\text{if } \{t_{n-2},t_{n-1},t_n\} \cap S(H) = \{t_{n-2}\} \\
        \{s_ns_{n-1}\} \cup \{s_j \cdots s_{n-1} \mid j \in [n-1]\}&\text{if } \{t_{n-2},t_{n-1},t_n\} \cap S(H) = \{t_n\} \\ 
        \mathfrak{W}^{n-1} \setminus \{e\}&\text{if } \{t_{n-2},t_{n-2},t_n\} \cap S(H) = \emptyset.
    \end{cases}\]
\end{proposition}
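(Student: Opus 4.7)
The plan mirrors Proposition \ref{prop:hdes_i}, adapted for $i = n-1$: I use Lemma \ref{lem:subword_endings} to restrict to $\mathfrak{W}^{n-1} \setminus \{e\}$, use Lemma \ref{lem:t_positivity_from_s} to eliminate $t_j$ for $j \le n-3$ as potential obstructions, and then perform a case analysis on the three remaining transpositions $t_{n-2}$, $t_{n-1}$, $t_n$. The reason the proposition has six cases rather than four is that the length-four braid relation between $s_{n-1}$ and $s_n$ couples $t_{n-1}$ and $t_n$, so the three transpositions cannot all be handled uniformly.

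The heart of the argument is to identify, for each of $t_{n-2},\,t_{n-1},\,t_n$, the precise subset of $\mathfrak{W}^{n-1} \setminus \{e\}$ on which $wt<w$. For $t_{n-2}$ the argument of Proposition \ref{prop:hdes_i} applies verbatim via the identity $s_{n-2}s_{n-1}t_{n-2}=s_{n-1}$ together with Lemma \ref{lem:subword_partial_endings}, yielding $wt_{n-2}<w$ iff $w=s_j\cdots s_{n-1}$ for some $j\le n-2$. For $t_{n-1}$ and $t_n$, analogous criteria are obtained using the braid relation $s_{n-1}s_n s_{n-1}s_n=s_n s_{n-1}s_n s_{n-1}$ and commutation with $s_{n-3},\ldots,s_1$; this amounts to an analogue of Lemma \ref{lem:subword_partial_endings} classifying when a reduced word for $w$ must end in $s_{n-1}s_n s_{n-1}$ or in $s_n s_{n-1} s_n$. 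Intersecting $\mathfrak{W}^{n-1} \setminus \{e\}$ with the complements of these classifying subsets for each $t \in S(H)$ then recovers the six cases of the proposition; the converse inclusion, that each listed element has its unique $H$-inversion at $s_{n-1}$, follows by direct verification using the same identities (and Lemma \ref{lem:t_positivity_from_s} handles the remaining simple and $t_j$ ($j\le n-3$) conditions).

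The principal obstacle is the enhanced analogue of Lemma \ref{lem:subword_partial_endings} at $i=n-1$. In the purely type-A part of the group only the braid $s_is_{i+1}s_i=s_{i+1}s_is_{i+1}$ and commutation can rearrange reduced words; at $i=n-1$ the fourth Coxeter relation intertwines $t_{n-1}$ and $t_n$, so a single element can admit reduced words with several distinct endings (for instance $s_{n-2}s_n s_{n-1} = s_n s_{n-2}s_{n-1}$ has reduced words ending in both $s_n s_{n-1}$ and $s_{n-2}s_{n-1}$). The induction on $\ell(w)$ establishing when each reduced-word ending is \emph{forced} must therefore carefully track how each available braid or commutation move can alter the final two letters of a reduced word, so as to separate cleanly the families of $w$ indexed in cases (3)--(5) where $t_{n-2}$-obstructions, $t_n$-obstructions, or both occur.
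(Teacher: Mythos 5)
Your outline follows the paper's proof in its first two steps (Lemma \ref{lem:subword_endings} to restrict to $\mathfrak{W}^{n-1}\setminus\{e\}$, Lemma \ref{lem:t_positivity_from_s} to reduce to $t_{n-2},t_{n-1},t_n$), but the heart of your argument rests on a false equivalence. You propose to identify, for each $t$, ``the precise subset of $\mathfrak{W}^{n-1}\setminus\{e\}$ on which $wt<w$'' and to read it off from Lemma \ref{lem:subword_partial_endings}. That lemma classifies the elements \emph{all} of whose reduced words share a prescribed ending, whereas $wt_{n-2}<w$ is equivalent to \emph{some} reduced word ending in $s_{n-2}s_{n-1}$ (indeed $w$ has such a reduced word iff $ws_{n-1}<w$ and $w(s_{n-1}(\alpha_{n-2}))=w(\alpha_{n-2}+\alpha_{n-1})\in\Phi^-$). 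Your own example $s_{n-2}s_ns_{n-1}=s_ns_{n-2}s_{n-1}$ already breaks the claimed biconditional for $t_{n-2}$: it has a reduced word ending in $s_{n-2}s_{n-1}$, hence $wt_{n-2}<w$, yet it is not of the form $s_j\cdots s_{n-1}$. The paper only ever uses the valid one-sided implication (if $t\in S(H)$ then \emph{no} reduced word of $w\in\hdes{n-1}$ may carry the corresponding suffix, which forces \emph{every} reduced word into the complementary class, where Lemma \ref{lem:subword_partial_endings} does apply), so your ``intersection of complements'' cannot be carried out with the sets you describe.

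This gap cannot be closed by refining the endings analysis, because case (5) of the statement itself fails. Take $n=3$ in type C and $H=\{\alpha_1,\alpha_2,\alpha_3,\alpha_2+\alpha_3\}$ (the second Hessenberg space in Example \ref{ex:hessenbergs}); by Example \ref{ex:C3_correspondence}, $t_3=(2,\overline{3})\leftrightarrow e_2+e_3\in H$ while $t_2\leftrightarrow 2e_2$ and $t_1\leftrightarrow e_1-e_3$ are not in $H$, so $\{t_1,t_2,t_3\}\cap S(H)=\{t_n\}$. The element $w=s_1s_3s_2=s_3s_1s_2=[2,\overline{3},1]$ satisfies $w(\alpha_1)=e_2+e_3$, $w(\alpha_3)=2e_1$, and $w(\alpha_2+\alpha_3)=e_1-e_3$, all positive, while $w(\alpha_2)=-e_1-e_3\in\Phi^-$; hence $w\in\hdes{2}$ although $w\notin\{s_2,\,s_1s_2,\,s_3s_2\}$. (Its only reduced words are $s_1s_3s_2$ and $s_3s_1s_2$, neither ending in $s_2s_3s_2$, so it is not excluded by the $t_n$-condition; $[3,\overline{2},1]$ and $[\overline{3},\overline{2},1]$ are further such elements.) The paper's own proof has the same defect in this case: from ``no reduced word of $w$ ends in $s_{n-1}s_ns_{n-1}$'' one cannot conclude $w\in\{s_ns_{n-1}\}\cup\{s_j\cdots s_{n-1}\mid j\in[n-1]\}$, precisely because of elements with mixed two-letter endings, which Lemma \ref{lem:subword_partial_endings} does not classify. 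Cases (1)--(4) and (6) are unaffected, since there some $t\in S(H)$ forbids one of the two possible endings outright and so forces $w$ into a single class of Lemma \ref{lem:subword_partial_endings}; but case (5) as stated (and the downstream dimension count in Lemma \ref{lem:left_basis} and Theorem \ref{thm:character} in this configuration) needs to be corrected rather than re-proved.
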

\begin{proof}
    By Lemma \ref{lem:subword_endings}, if $w \in \hdes{n-1}$ and $w \neq s_{n-1}$ then $w$ ends in $s_{n-2}s_{n-1}$ or in $s_ns_{n-1}$. Additionally, $s_{n-2}s_{n-1}t_{n-2} = s_{n-1}= s_ns_{n-1}t_{n-1}$. By Lemma \ref{lem:subword_partial_endings}
    \[\hdes{n-1} \subseteq \begin{cases}
        \{s_{n-1}\} & \text{if } \{t_{n-2},t_{n-1}\} \subset S(H)\\
        \{s_j \cdots s_{n-1} \mid j \in [n-1]\}&\text{if } \{t_{n-2},t_{n-1}\} \cap S(H) = \{t_{n-1}\} \\ 
        \{s_{n-1}\}\cup \{s_j \cdots s_ns_{n-1} \mid j \in [n]\}&\text{if } \{t_{n-2},t_{n-1}\} \cap S(H) = \{t_{n-2}\} \\
        \mathfrak{W}^{n-1} \setminus \{e\}&\text{if } \{t_{n-2},t_{n-2}\} \cap S(H) = \emptyset. 
    \end{cases}\]
    In addition, $s_ns_{n-1}t_n = s_{n-1}s_ns_{n-1}$ but $s_{n-1}s_ns_{n-1}t_n = s_ns_{n-1}$. So if $t_n \in S(H)$ then $w$ cannot end in $s_{n-1}s_ns_{n-1}$. This gives forward direction of containment.\par 
    Again by Lemma \ref{lem:t_positivity_from_s} we need only check if $wt_{n-2} < w$, $wt_{n-1} < w$, and/or $wt_n < w$, and this gives the reverse direction of containment.    
\end{proof}
\begin{proposition}\label{prop:hdes_n}
    \[
    \hdes{n} =  \begin{cases}
        \{s_{n}\} & \text{if } t_n \in  S(H) \\
        \{s_j \cdots s_{n} \mid j \in [n]\}&\text{if } \{t_{n-1},t_n\} \cap S(H) = \{t_{n-1}\} \\ 
        \mathfrak{W}^{n}\setminus\{e\}&\text{if } \{t_{n-1},t_n\} \cap S(H) =\emptyset .
    \end{cases}
    \]
\end{proposition}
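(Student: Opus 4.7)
The plan is to follow the template of Propositions \ref{prop:hdes_i} and \ref{prop:hdes_n1}, specialized to $i = n$. The key simplification is that since $s_{n+1}$ does not exist, only one type of reduced-word ending, namely $s_{n-1}s_n$, is available to elements of $\mathfrak{W}^n$.

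First I would establish the containment $\hdes{n} \subseteq \mathfrak{W}^n \setminus \{e\}$ by Lemma \ref{lem:subword_endings}, and observe that for any $w \in \mathfrak{W}^n$ with $\ell(w) \geq 2$, every reduced word for $w$ must end in $s_{n-1}s_n$. Writing $w = u\cdot s_{n-1}s_n$ in reduced form and computing using $s_n^2 = e$ and the type B/C braid $s_{n-1}s_ns_{n-1}s_n = s_ns_{n-1}s_ns_{n-1}$ yields
\[
wt_n = us_n \qquad\text{and}\qquad wt_{n-1} = us_ns_{n-1}s_n.
\]
The first identity gives $\ell(wt_n) \leq \ell(u)+1 < \ell(w)$, so $wt_n < w$ whenever $w \neq s_n$. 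Direct checks on $w = s_n$ show $s_n t_n = s_{n-1}s_n > s_n$ and $s_n t_{n-1} = s_{n-1}s_ns_{n-1}s_n > s_n$, so $s_n \in \hdes{n}$ regardless of $H$. This settles case one: if $t_n \in S(H)$, then $\hdes{n} = \{s_n\}$.

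For the remaining cases, the heart of the proof is determining when $wt_{n-1} > w$. The cleanest route is the root-theoretic descent criterion: $wt_{n-1} < w$ if and only if $w$ sends the positive root of $t_{n-1}$ into $\Phi^-$. By \eqref{eq:Tset}, that root is proportional to $e_{n-1}$ in both types B and C, so the condition reduces to $w(n-1) < 0$ in one-line notation. To use this, I would classify $\mathfrak{W}^n$: the condition $ws_i > w$ for $i < n$ forces the one-line notation of $w$ to list its positive entries first in increasing order, then its negative entries in decreasing order of absolute value. A case-count on the number of negative entries then shows that $w(n-1) > 0$ holds precisely when $w$ has exactly one negative entry, and that such $w$ are exactly $[1,2,\ldots,j-1,j+1,\ldots,n,-j] = s_js_{j+1}\cdots s_n$ for $j \in [n]$.

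To finish, I would assemble the three cases. By Lemma \ref{lem:t_positivity_from_s}, $wt_i > w$ automatically for all $i \in [n-2]$ when $w \in \mathfrak{W}^n$, so the only potential obstructions to membership in $\hdes{n}$ come from $t_{n-1}$ and $t_n$. In case two ($t_{n-1} \in S(H)$, $t_n \notin S(H)$), the requirement $wt_{n-1} > w$ selects exactly $\{s_j\cdots s_n : j \in [n]\}$. In case three (neither in $S(H)$), no further obstruction remains, yielding $\hdes{n} = \mathfrak{W}^n \setminus \{e\}$. The main obstacle is the one-line-notation classification of $\mathfrak{W}^n$ together with the identification of which elements satisfy $w(n-1) > 0$; once this is in hand, the rest is a straightforward accounting analogous to the proofs of the earlier propositions.
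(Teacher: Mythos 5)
Your proof is correct. It shares the paper's skeleton --- Lemma \ref{lem:subword_endings} confines $\hdes{n}$ to $\mathfrak{W}^n\setminus\{e\}$ and forces every reduced word of a longer element to end in $s_{n-1}s_n$, the identity $s_{n-1}s_nt_n=s_n$ settles the case $t_n\in S(H)$, and Lemma \ref{lem:t_positivity_from_s} reduces everything else to checking $t_{n-1}$ and $t_n$ --- but it handles the decisive question of when $wt_{n-1}>w$ by a genuinely different method. The paper stays inside reduced-word combinatorics: from $s_ns_{n-1}s_nt_{n-1}=s_{n-1}s_n$ it concludes that an element of $\hdes{n}$ of length at least $3$ cannot end in $s_ns_{n-1}s_n$, hence must end in $s_{n-2}s_{n-1}s_n$, and then reruns ``the same logic as in the proof of Lemma \ref{lem:subword_partial_endings}'' to land on $\{s_j\cdots s_n \mid j\in[n]\}$. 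You instead invoke the root-theoretic descent criterion: by \eqref{eq:Tset} the root of $t_{n-1}$ is proportional to $e_{n-1}$ in both types, so $wt_{n-1}<w$ exactly when $w(n-1)<0$, and the one-line description of $\mathfrak{W}^n$ (positive entries increasing, then negative entries in decreasing order of absolute value, which I verified is the correct order) shows that $w(n-1)>0$ with $w\neq e$ forces exactly one negative entry, i.e.\ $w=s_j\cdots s_n$. Your route replaces the paper's somewhat informal appeal to the proof of Lemma \ref{lem:subword_partial_endings} with a concrete classification of $\mathfrak{W}^n$ --- a description the paper itself only gestures at after Example \ref{ex:hdes_set} --- and it degenerates gracefully for small $n$; the paper's route keeps all of Section \ref{sec:one_inversion} in a single reduced-word idiom and needs no one-line bookkeeping. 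Both are complete once the respective classification step is written out in full.
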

\begin{proof}
    By Lemma \ref{lem:subword_endings}, if $w \in \hdes{n}$ and $w \neq s_n$, then $w$ ends in $s_{n-1}s_n$. Now $s_{n-1}s_nt_n = s_n$ and $s_ns_{n-1}s_nt_{n-1} =s_{n-1}s_n$. So if $t_n \in S(H)$ then $\hdes{n} = \{s_n\}$. If $t_{n-1} \in \hdes{n}$ then any reduced word for $w$ cannot end in $s_ns_{n-1}s_n$, so any element of $w$ of length at least $3$ must end in $s_{n-2}s_{n-1}s_n$. By the same logic as in the proof of Lemma \ref{lem:subword_partial_endings}, $w \in \{s_j\cdots s_n \mid j \in [n]\}$. This gives the forward direction of containment.
    Finally, by Lemma \ref{lem:t_positivity_from_s} it is sufficient to check if $wt_{n-1} < w$ and $wt_n < w$, and we have the reverse direction of containment.
\end{proof}
\begin{example}\label{ex:hdes_set}
    Consider $\mathfrak{W}_5$ and let $S(H) = \{t_1,t_5\}$. We will denote reduced words in parentheses, so that $s_is_js_k = (ijk)$. Then 
    \begin{align*}
        \hdes{1} &=  \{(1)\}\\
        \hdes{2} &=  \left\{\begin{matrix}(12345432), (2345432), ( 345432),\\  (45432), (5432), (432), (32), (2)\end{matrix}\right\}\\
        \hdes{3} &= \left\{\begin{matrix}
        (234512345342312), (34512345342312),  (3452345342312), \\(4512345342312),  (452345342312), (512345342312), \\(12345342312), (45345342312), (52345342312),\\
        (1234542312), (2345342312), (5345342312), (123454312), (234542312), \\
        (345342312), (545342312), (12345432), (23454312), (34542312),\\ (45342312), (2345432), (3454312), (4542312), (5342312), \\(342312), (345432), (454312), (542312), (42312), (45432),\\ (54312), (2312), (4312), (5432), (312), (432), (12), (32), (2)\end{matrix}\right\} \\
        \hdes{4} &=  \{(54),(1234),(234),(34),(4)\}\\
        \hdes{5} &=  \{(5)\}.
    \end{align*}
\end{example}
The one-line notation for some of these elements of $\hdes{i}$ are 
\[s_j\cdots s_i =
\begin{cases}
     [1,\ldots,j-1,j+1,\ldots,i+1,j,i+2,\ldots,n] &\text{if } j<i<n\\
    [1,\ldots,j-1,j+1,\ldots,n,\bj] &\text{if }   j<i = n\\
\end{cases}
\]
and 
\[s_j \cdots s_n\cdots s_i = \begin{cases}
    [1,\ldots,j-1,j+1,\ldots,i,\bj,i+1,\ldots,n] &\text{if } j<i\\
    [1,\ldots,i-1,\bi,i+1,\ldots,n] &\text{if } j=i\\
    [1,\ldots,i-1,\bj,i,\ldots,j-1,j+1,\ldots,n] &\text{if } i<j\leq n.
\end{cases}\]
The one-line notation for elements of $\Wi$ are more complicated. It is however quite easy to tell whether two elements of $\Wn$ are in the same coset of $\mathfrak{S}_i \times \mathfrak{W}_{n-i}$, as 
\[
v \in w \left(\mathfrak{S}_i \times \mathfrak{W}_{n-i}\right)\;\;\text{ if and only if }\;\; v([i]) = w([i]).
\]
\section{Splines in \texorpdfstring{$\splines{H}^1$}{splinesMH1}}\label{sec:linear_splines}
This section defines four families of functions from $\Wn$ to $\poly$: the $\spline{f}$-, $\spline{y}$-, $\spline{g}$-, and $\spline{h}$-splines. It begins with the definitions and characterization of which $H$ these families are members of $\splines{H}$, then gives some linear relations, and finally computes the dot action on them. \par 

A subset $A \subset [\overline{n}]$ is \emph{unbalanced} if $i \in A$ implies $ \bi \notin A$. In particular, any $w(I)$ for $I \subset[n]$ and $w \in \Wn$ is imbalanced and vice versa. For $i \in [n]$, define
\begin{align*}
    \spline{f}_i^A(w) &\coloneqq \begin{cases}
    x_{w(i)}-x_{w(i+1)} &\text{if }w([i]) = A  \text{ and } i < n \\
    x_{w(n)} &\text{if }w([i]) = A  \text{ and } i = n \\
    0 &\text{otherwise}
    \end{cases}
\end{align*}
for all $\abs{A} = i$ and $A$ unbalanced. \par 
Call $s_i$ \emph{uncovered} if either
    \begin{itemize}
        \item $i \neq n-1$ and $\{t_{i-1},t_{i}\} \cap S(H) = \emptyset$, or 
        \item $i = n-1$ and $\{t_{n-2},t_{n-1},t_n\} \cap S(H) = \emptyset$.
    \end{itemize}

\begin{lemma}
    If $s_i$ is uncovered, then $\spline{f}_i^A$ is a spline for all unbalanced $A$ where $\abs{A} = i$.
\end{lemma}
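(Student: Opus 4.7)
The plan is to verify the defining condition $\spline{f}_i^A(w) - \spline{f}_i^A(ws) \in \llangle \cl(w, ws) \rrangle$ for every edge $(w, ws)$ with $s \in S(H)$. By Proposition \ref{prop:BCtriv} and Corollary \ref{cor:t_transpositions}, we may assume $S(H) \subseteq \{s_1, \ldots, s_n\} \cup \{t_1, \ldots, t_n\}$. Since $\spline{f}_i^A$ is supported on $\{w \in \Wn : w([i]) = A\}$, I organize the check by whether right multiplication by $s$ preserves the set $w([i])$.

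A direct case analysis of the transpositions in $\{s_j\} \cup \{t_j\}$ shows that $(ws)([i]) \neq w([i])$ precisely when $s$ swaps or negates an entry at a position in $[i]$ with one at a position outside $[i]$. The offending transpositions are $s_i$ (swapping positions $i, i+1$), $s_n$ when $i = n$, $t_{i-1}$ and $t_i$ (swapping positions $i-1, i+1$ and $i, i+2$ respectively), and---when $i = n-1$---also $t_{n-1}$ (negating position $n-1$) and $t_n$ (swapping positions $n-1, n$ with a sign change). The uncovered hypothesis is exactly the assumption that removes every $t$-type transposition from this list, leaving only $s_i$ (or $s_n$ if $i = n$) as a possible boundary-crossing edge in $S(H)$.

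For the boundary-crossing edge $s = s_i$ with $w([i]) = A$, the set $(ws_i)([i]) = (A \setminus \{w(i)\}) \cup \{w(i+1)\}$ cannot equal $A$ since $w(i+1) \notin A$, so the spline vanishes at $ws_i$ and the difference equals $\spline{f}_i^A(w) = x_{w(i)} - x_{w(i+1)} = \cl(w, ws_i)$; the case $s = s_n$ with $i = n$ is analogous using $\cl(w, ws_n) = x_{w(n)}$ and the unbalanced hypothesis to rule out $-w(n) \in A$. For every non-boundary-crossing edge, both sides either vanish (if $w([i]) \neq A$) or evaluate to $x_{w(i)} - x_{w(i+1)}$ and $x_{ws(i)} - x_{ws(i+1)}$; writing out the difference shows it equals $\pm(x_{w(p)} - x_{w(q)})$ for the pair of positions $p, q$ swapped (or the single position negated) by $s$, which is a unit multiple of $\cl(w, ws)$. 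The convention $x_{-k} = -x_k$ from Remark \ref{rem:Lie_labeling} is needed to read this discrepancy correctly when $s$ involves a negation at a position adjacent to $i$ or $i+1$.

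The main obstacle is the exhaustive bookkeeping across many subcases rather than any conceptual difficulty; the most delicate moments occur near positions $n-1$ and $n$, where the type B/C distinction between $s_n$, $t_{n-1}$, and $t_n$ dictates which clause of the uncovered definition must be invoked.
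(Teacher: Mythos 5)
Your proof is correct and follows essentially the same route as the paper's: split on whether $ws([i]) = w([i])$, use the uncovered hypothesis to conclude that the only boundary-crossing edge in $S(H)$ is $s_i$ (or $s_n$ when $i=n$) and there compute the difference to be exactly $\cl(w,ws_i)$, and observe that on non-crossing edges the difference is either zero or lands in the edge ideal. The only cosmetic differences are that you make the reduction to $S(H)\subseteq\{s_j\}\cup\{t_j\}$ explicit via Proposition \ref{prop:BCtriv} (the paper leaves this implicit, relying on the order-ideal property of $H$) and that you verify the non-crossing supported case by direct computation where the paper cites Lemma \ref{lem:t_and_r}.
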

\begin{proof}
    Let $w \in \Wn$ and $s \in S(H)$. If $w([i]) = ws([i])$ then either both $w$ and $ws$ are supported by $\spline{f}_i^A$ or both are not. If not, then $\spline{f}_i^A(w) - \spline{f}_i^A(ws) = 0 \in \llangle \cl(w,ws)\rrangle$. If both $w$ and $ws$ are in the support then 
    \[
    \spline{f}_i^A(w) - \spline{f}_i^A(ws) = \begin{cases}
    \left[\spline{r}_i-\spline{r}_{i-1}\right](w) - \left[\spline{r}_i-\spline{r}_{i-1}\right](ws) &\text{if }i < n \\
    \spline{r}_n(w)-\spline{r}_{n}(ws) &\text{if } i = n, 
    \end{cases}
    \]
    which is in $\llangle \cl(w,ws)\rrangle$ by Lemma \ref{lem:t_and_r}. \par
    If $w([i]) = A$ and $ws([i]) \neq A$, then $s = s_i$. In particular, \[\spline{f}_i^A(w) -\spline{f}_i^A(ws_i) = \spline{f}_i^A(w) = \left\{\begin{matrix}
        x_{w(i)}-x_{w(i+1)} &\text{if } i<n\\
        x_{w(n)} &\text{if } i=n\\
    \end{matrix}\right\} \in \llangle \cl(w,ws_i)\rrangle,\] and so $\spline{f}_i^A$ is a spline.
\end{proof}

Define
\[\spline{y}_{i,k}(w) \coloneqq \begin{cases} 
x_k - x_{w(i+1)} &\text{ if }w^{-1}(k) \in [i]\\
0&\text{ otherwise}.
\end{cases}\]
for $k \in [\overline{n}]$ and $i \in [n-1]$.
\begin{lemma}
    If either \begin{itemize}
        \item $i \in [n-2]$ and $t_i \notin S(H)$, or 
        \item $i = n-1$ and $\{t_{n-1},t_n\} \cap S(H) = \emptyset$, then 
    \end{itemize} 
    $\spline{y}_{i,k}$ is a spline for all $k \in [\bn]$.
\end{lemma}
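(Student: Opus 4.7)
My plan is to verify the spline condition $\spline{y}_{i,k}(w) - \spline{y}_{i,k}(ws) \in \llangle \cl(w,ws)\rrangle$ edge-by-edge in the graph underlying $\splines{H}$. Because $\Delta \subseteq H$, Proposition \ref{prop:BCtriv} and Corollary \ref{cor:t_transpositions} allow me to reduce $H$ to $H \cap (\Delta \cup \{s_p(\alpha_q) \mid p \neq q\})$ without changing $\splines{H}^1$ as a set of functions. Thus it suffices to check divisibility only for $s$ a simple reflection and for $s = t_j$ with $t_j \in S(H)$.

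Next I would exploit two structural facts: $\spline{y}_{i,k}(w)$ depends on $w$ only through the set $w([i])$ and the single value $w(i+1)$; and $(ws)^{-1}(k) = s(w^{-1}(k))$ since $s$ is an involution. For each generator $s$ from the reduced list above, I would split cases by whether $s$ fixes $[i]$ setwise and whether $s$ fixes the index $i+1$. If $s$ does both, the difference is $0$. If $s$ fixes $[i]$ setwise but moves $i+1$, then for $w^{-1}(k)\in[i]$ the difference is $x_{ws(i+1)}-x_{w(i+1)}$, which matches $\pm\cl(w,ws)$ under the signed convention $x_{\bar j}=-x_j$; otherwise both endpoints vanish. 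If $s$ exchanges a single element in $w([i])$ for one outside, exactly one endpoint is supported and a short direct computation identifies the nonzero value with $\pm\cl(w,ws)$; this covers $s_i$ itself and $t_{i-1}$ (and $t_{n-2}$ when $i=n-1$), where the element entering or leaving is precisely one endpoint of the edge label.

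The main obstacle, and the point where the hypothesis on $S(H)$ is used, is identifying exactly which generators are incompatible with this three-case framework. These are the reflections that either swap an element in or out of $w([i])$ while \emph{leaving} position $i+1$ untouched, or else move $w(i+1)$ in a way whose label does not involve it. I expect the only such reflections in the reduced list to be $t_i = (i,i+2)$ (for $i\in [n-2]$), whose label $x_{w(i)}-x_{w(i+2)}$ fails to divide the residual $x_{w(i)}-x_{w(i+1)}$ that arises on the $k=w(i)$ branch, and additionally $t_{n-1}$ and $t_n$ when $i=n-1$, whose labels involve only $x_{w(n-1)}$ or $x_{w(n-1)}\pm x_{w(n)}$ and cannot absorb the difference $x_{w(n-1)}-x_{w(n)}$ that arises on the $k=w(n-1)$ branch. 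These are precisely the reflections excluded by the two clauses of the hypothesis, so all remaining edges fall into one of the benign cases and $\spline{y}_{i,k}$ is a spline.
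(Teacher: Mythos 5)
Your proposal is correct and follows essentially the same route as the paper's proof: reduce to $S(H)\subseteq\{s_j\mid j\in[n]\}\cup\{t_j\mid j\in[n]\}$, observe that across an edge $(w,ws)$ the difference $\spline{y}_{i,k}(w)-\spline{y}_{i,k}(ws)$ is either $0$ or $x_{ws(i+1)}-x_{w(i+1)}$ unless $s$ moves $w^{-1}(k)$ out of $[i]$, and note that under the stated hypotheses the only members of $S(H)$ that can do this are $s_i$ and $t_{i-1}$, for which the surviving value is exactly $\cl(w,ws)$ --- with $t_i$ (and, for $i=n-1$, also $t_{n-1},t_n$) correctly identified as the precise obstructions. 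The only imprecision is your claim that when $s$ exchanges an element of $w([i])$ with one outside ``exactly one endpoint is supported'': that holds only when $w^{-1}(k)$ is the exchanged index, and in the remaining sub-case (both endpoints supported) the difference is again $x_{ws(i+1)}-x_{w(i+1)}=\pm\cl(w,ws)$ for $s\in\{s_i,t_{i-1}\}$, so nothing breaks.
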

\begin{proof}
    Let $w \in \Wn$ and $s \in S(H)$. If $s w^{-1}(k) \in [i]$ and $ w^{-1}(k) \in [i]$ then both $w$ and $ws$ are in the support of $\spline{y}_{i,k}$ and so the difference \[\spline{y}_{i,k}(ws) - \spline{y}_{i,k}(w) = \left[\spline{t}_k-\spline{r}_{i+1}\right](w)-\left[\spline{t}_k-\spline{r}_{i+1}\right](ws) \in \llangle \cl(w,ws)\rrangle.\] Similarly, if both are not supported then the same occurs. \par 
    Now if $s w^{-1}(k) \notin [i]$ and $ w^{-1}(k) \in [i]$, that means that $w^{-1}(k) \notin s([i])$ and $w^{-1}(k) \in [i]$. In particular, $s=s_i$ or $s=t_{i-1}$. \par 
    If $s = s_i$ then $w^{-1}(k) = i$. So $w(i) = k$ and $\spline{y}_{i,k}(w)  = t_{w(i)}-t_{w(i+1)} \in \llangle \cl(w,ws)\rrangle$. \par 
    If $s = t_{i-1} = (i-1,i+1)$ then $w^{-1}(k) = i-1$. So $\spline{y}_{i,k}(w) = t_{w(i-1)} - t_{w(i+1)} = \llangle \cl(w,ws)\rrangle$.
\end{proof}
  Define 
\[
\spline{g}_i \coloneqq \begin{cases}
    x_i & \text{if } w^{-1}(i) < 0 \\
    0   &\text{otherwise.}
\end{cases}
\]
for $i \in [\overline{n}]$.
\begin{lemma}
    If $t_{n}\notin S(H)$ then $\spline{g}_i$ is a spline for all $i \in [\bar{n}]$.
\end{lemma}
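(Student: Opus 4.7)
The plan is to check the defining spline condition directly: for each $w \in \Wn$ and each transposition $s \in S(H)$, verify that $\spline{g}_i(w) - \spline{g}_i(ws) \in \llangle \cl(w,ws)\rrangle$. The key observation is that $\spline{g}_i(w)$ depends only on the sign of $j \coloneqq w^{-1}(i)$, while $(ws)^{-1}(i) = s(j)$. Whenever $s$ preserves the sign of $j$, both $w$ and $ws$ evaluate identically under $\spline{g}_i$ and the difference vanishes. Otherwise exactly one of $w$ and $ws$ lies in the support of $\spline{g}_i$, and the difference is $\pm x_i$.

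The strategy is to use the hypothesis $t_n \notin S(H)$ to rule out most transpositions that could flip the sign of $j$. Every transposition in $\Wn$ belongs to exactly one of three classes: sign-preserving $(p,q)$ (irrelevant), sign-mixing $(p,\bar q)$ with $p < q$ both positive, or pure sign-flips $(p,\bar p)$. I claim $t_n \notin S(H)$ excludes every sign-mixing transposition from $S(H)$. Indeed, $(p,\bar q)$ corresponds to the root $e_p + e_q$, and expanding in simple roots in either type B or type C gives
\[
(e_p + e_q) - (e_{n-1}+e_n) = \alpha_p + \cdots + \alpha_{q-1} + 2\alpha_q + \cdots + 2\alpha_{n-2} + \alpha_{n-1},
\]
where the type-dependent coefficients on $\alpha_n$ cancel. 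So $e_{n-1}+e_n \leq e_p + e_q$ in the positive root poset, and since $H$ is a lower order ideal, $(p,\bar q) \in S(H)$ would force $t_n \in S(H)$, contrary to assumption.

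It remains to handle $s = (p, \bar p) \in S(H)$ that actually flips the sign of $j$, which forces $|j| = p$ and hence $|w(p)| = |i|$. By (\ref{eq:labels}) the edge label is $\cl(w,ws) = x_{|w(p)|} = x_{|i|}$, so the difference $\pm x_i$ lies in $\llangle x_{|i|}\rrangle = \llangle \cl(w,ws)\rrangle$, completing the verification. The only genuinely non-routine step is the root-poset comparison above; it is mild, but I would take care to run it uniformly for types B and C rather than case-by-case, since that is where a sloppy treatment of $\alpha_n$-coefficients could slip in an error.
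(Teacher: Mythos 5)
Your proof is correct. Its skeleton is the same as the paper's: only a transposition that changes the sign of $w^{-1}(i)$ can produce a nonzero difference, and for a pure sign flip $(p,\bar{p})$ the forced equality $\abs{w(p)}=\abs{i}$ makes that difference a multiple of the edge label $\cl(w,ws)=x_{\abs{w(p)}}$. Where you genuinely diverge is in ruling out the sign-mixing reflections $(p,\bar{q})$. The paper leans on the standing reduction of Section \ref{sec:trivial}, under which $S(H)\subseteq\{s_1,\ldots,s_n\}\cup\{t_1,\ldots,t_n\}$, so the only sign-changing candidates are $s_n$, $t_{n-1}$, and $t_n$, and the hypothesis removes $t_n$; read in isolation, its assertion that these are the only sign-negating reflections in $S(H)$ is justified only by that earlier reduction. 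You instead argue directly from the lower-order-ideal property: since $t_n\leftrightarrow e_{n-1}+e_n$ in both types by \eqref{eq:Tset}, and
\[
(e_p+e_q)-(e_{n-1}+e_n)=\alpha_p+\cdots+\alpha_{q-1}+2\alpha_q+\cdots+2\alpha_{n-2}+\alpha_{n-1}
\]
involves only $\alpha_1,\ldots,\alpha_{n-1}$ (which agree in types B and C), the hypothesis $t_n\notin S(H)$ already forces every $(p,\bar{q})$ out of $S(H)$. What this buys you is a proof valid for an arbitrary Hessenberg space with $t_n\notin S(H)$, with no appeal to Proposition \ref{prop:BCtriv}, and a single uniform treatment of all pure sign flips $(p,\bar{p})$ rather than the two named cases $s_n$ and $t_{n-1}$. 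The paper's version is shorter but less self-contained.
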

\begin{proof}
    Let $w \in \Wn$ and $s \in S(H)$. If both $w$ and $ws$ are in the support or not in the support of $\spline{g}_i$, then $\spline{g}_i(w) - \spline{g}_i(ws) = 0$ and containment is trivial.

    Say $w$ is in the support and $ws$ is not. The only reflections in $S(H)$ that negate elements are $s_n = (n,\bn)$, $t_{n-1} = (n-1,\overline{n-1})$, and $t_n = (n-1,\bn)$. The condition is that $(n-1,\bn) \notin S(H)$. So we need to check the values on the pairs $(w,w(n,\bn))$ and $(w,w(n-1,\overline{n-1}))$. If $w^{-1}(i) < 0$ and $(n,\bn)w^{-1}(i)>0$ then $i = w(n)$. In particular, $x_i = x_{w(n)}$. The same holds for $(n-1,\overline{n-1})$.
\end{proof}

Finally, recall that $\Neg(w) \coloneqq \{w(i) \mid i \in [n],\; w(i) < 0\}$, and define
\[
\spline{h}(w) \coloneqq \begin{cases}
    x_{w(n)} &\text{if }\abs{\Neg(w)} \text{ is odd} \\
    0 &\text{otherwise.}
\end{cases}
\]

\begin{lemma}
    If $t_{n-1} \notin S(H)$ then $\spline{h}$ is a spline.
\end{lemma}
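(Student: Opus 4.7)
The plan is to verify, for each edge $(w,ws)$ with $s \in S(H)$, that $\spline{h}(w) - \spline{h}(ws)$ lies in $\llangle \cl(w,ws)\rrangle$. Since this section is devoted to computing $\splines{H}^1$, I may invoke Proposition \ref{prop:BCtriv} to reduce to the case $S(H) \subseteq \{s_1,\ldots,s_n, t_1,\ldots,t_n\}$, exactly as was done in the proof of the preceding lemma for $\spline{g}_i$.

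The key observation that organizes the case analysis is that among these candidate transpositions, only those of the form $(i,\bi)$ flip the parity of $\abs{\Neg(w)}$ under right multiplication. A quick check shows $s_i$ for $i<n$ and $t_i$ for $i \leq n-2$ merely permute positions, $t_n = (n-1,\bn)$ swaps and negates simultaneously (so the parity change is always even), while $s_n = (n,\bn)$ and $t_{n-1} = (n-1,\overline{n-1})$ each flip the parity. The hypothesis $t_{n-1} \notin S(H)$ therefore leaves $s_n$ as the unique parity-changing transposition in $S(H)$, which splits the verification into two main cases.

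In the parity-preserving case $s \neq s_n$, either both $w$ and $ws$ land in the zero locus of $\spline{h}$ (difference $0$, trivially in the ideal) or both land in the support, in which case the difference is $x_{w(n)} - x_{ws(n)}$. This is zero unless $s$ moves position $n$, leaving only the subcases $s \in \{s_{n-1}, t_{n-2}, t_n\}$. In each one the label $\cl(w,ws)$ read off from \eqref{eq:labels} matches the difference directly (for $t_n$ using that $ws(n) = -w(n-1)$, so the difference is $x_{w(n)} + x_{w(n-1)} = \cl(w,wt_n)$). In the parity-flipping case $s = s_n$, exactly one of $\spline{h}(w), \spline{h}(ws_n)$ is nonzero; since $ws_n(n) = -w(n)$, the difference is $\pm x_{w(n)}$, and $\cl(w,ws_n) = \abs{x_{w(n)}}$ generates the same ideal.

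The proof is essentially a bookkeeping argument once the parity observation is in place, so I do not anticipate a genuine obstacle; the only point that requires care is confirming that $t_n$ truly preserves parity (which uses that it simultaneously swaps and negates two positions rather than negating a single position), since the temptation is to lump $t_n$ together with $s_n$ and $t_{n-1}$ as a ``sign-changing'' transposition. Once that is isolated, the remaining computations are single-line verifications of each edge label.
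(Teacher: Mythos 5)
Your proof is correct and follows essentially the same route as the paper's: both hinge on the observation that, under the hypothesis $t_{n-1}\notin S(H)$, the only transposition in $S(H)$ that can flip the parity of $\abs{\Neg(w)}$ is $s_n = (n,\bn)$, and then split into the two resulting cases. The paper handles the both-supported case more compactly by noting that $\spline{h}$ agrees with $\spline{r}_n$ on its support and citing Lemma \ref{lem:t_and_r}, but your explicit edge-label verification (and your explicit reduction via Proposition \ref{prop:BCtriv}, which the paper leaves implicit) accomplishes the same thing.
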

\begin{proof}
    Let $w \in \Wn$ and $s \in S(H)$. If both $w$ and $ws$ are in the support of $\spline{h}$, then $\spline{h}(w) - \spline{h}(ws) = \spline{r}_n(w) - \spline{r}_n(ws) \in \llangle \cl(w,ws)\rrangle$, and if both are unsupported then containment is trivial.

    Say $w$ is in the support and $ws$ is not. Recall $t_{n-1} = s_{n-1}s_ns_{n-1} = (n-1,\overline{n-1})$. So the only reflection in $S(H)$ that can change the parity of $\abs{\Neg(w)}$ is $s_n = (n,\overline{n})$. Since $\spline{h}(w) - \spline{h}(ws_n) = \spline{h}(w) = x_{w(n)} \in \llangle \cl(w,ws_n)\rrangle$, we have the claim. 
\end{proof}

There are several linear relations between these families.
\begin{proposition}\label{prop:relations}
    Let $\spline{t}_i$, $\spline{r}_i$, $\spline{f}_i^v$, $\spline{y}_{i,k}$, and $\spline{g}_i$ be as above. Then
    \begin{itemize}
        \item[\textsc{(1)}] ${\ds \;\; \sum_{A \subset [\bn]} \spline{f}_i^A = \spline{r}_i -\spline{r}_{i+1}}$ for all $i \in [n-1]$ and ${\ds \;\; \sum_{A \subset [\bn]} \spline{f}_n^A = \spline{r}_n}$,
        \item[\textsc{(2)}] ${\ds \;\;\sum_{k\in [\bn]} \spline{y}_{i,k} = \sum_{j \in [i]} \spline{r}_{j}-i\spline{r}_{i+1}}$ for all $i \in [n-1]$,
    \end{itemize}
    if $p < m < n$ then 
    \begin{itemize}
        \item[\textsc{(3)}] ${\ds \;\;\spline{y}_{p,k} + \sum_{p<i \leq m}\sum_{A \ni k} \spline{f}_i^A = \spline{y}_{m,k}  }$ for all $k \in [\overline{n}]$, 
    \end{itemize}
    if $p <n$, then 
    \begin{itemize}
        \item[\textsc{(4)}] ${\ds\;\; \spline{y}_{p,k} + \sum_{p<i\leq n}\sum_{A \ni k} \spline{f}_i^A = -\spline{g}_{\bar{k}}}$,
    \end{itemize}
    and if $i \in [n]$ then 
    \begin{itemize}
        \item[\textsc{(5)}] ${\ds\;\; \spline{g}_{i} - \spline{g}_{\bar{i}} = \spline{t}_i }$ and $\ds 2\sum_{j \in [n]} \spline{g}_j = \sum_{j \in [n]} \spline{t}_j-\spline{r}_j$.
    \end{itemize}
\end{proposition}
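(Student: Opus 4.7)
\medskip
\noindent\textbf{Proof Plan.}
The plan is to verify each of the five identities pointwise: since every object on both sides is a function $\Wn\to\poly$, it suffices to fix an arbitrary $w\in\Wn$ and show the two values agree in $\poly$. All five reduce to bookkeeping of which indicator conditions in the definitions of $\spline{f}_i^A$, $\spline{y}_{i,k}$, $\spline{g}_k$ are met by the chosen $w$. No spline-theoretic input beyond the definitions is used; in particular, none of the relations require checking the spline compatibility on edges, since every summand is already known to be a spline in the appropriate range of $H$.

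For \textsc{(1)} I would fix $w$ and note that the only term in $\sum_A \spline{f}_i^A(w)$ that is nonzero is $A=w([i])$, which is unbalanced and of size $i$. Reading off the definition gives $x_{w(i)}-x_{w(i+1)}$ (or $x_{w(n)}$ when $i=n$), and this is exactly $\spline{r}_i(w)-\spline{r}_{i+1}(w)$ (respectively $\spline{r}_n(w)$). For \textsc{(2)}, again fix $w$: the nonzero terms on the left are precisely those $k$ with $w^{-1}(k)\in[i]$, namely $k=w(j)$ for $j\in[i]$, so the sum collapses to $\sum_{j\in[i]}\bigl(x_{w(j)}-x_{w(i+1)}\bigr)$, which matches the right-hand side.

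The meat of the work is in \textsc{(3)} and \textsc{(4)}, which are telescoping identities. For \textsc{(3)}, fix $w$ and $k$ and split on the location of $w^{-1}(k)$ among $[p]$, $\{p+1,\dots,m\}$, and the complement. In each case one observes that $\sum_{A\ni k}\spline{f}_i^A(w)$ is nonzero exactly when $w^{-1}(k)\in[i]$, in which case it equals $x_{w(i)}-x_{w(i+1)}$, so summing over $p<i\le m$ telescopes. Combining with the value of $\spline{y}_{p,k}(w)$ produces the value of $\spline{y}_{m,k}(w)$ on the nose. For \textsc{(4)} the argument is the same, with the $i=n$ term contributing $x_{w(n)}$ and completing the telescoping to $x_k$ when $w^{-1}(k)>0$, or to $0$ when $w^{-1}(k)<0$; the final bookkeeping step matches this against $-\spline{g}_{\bar k}(w)$ via the defining condition on $\spline{g}$.

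For \textsc{(5)}, the first identity is an immediate case split on the sign of $w^{-1}(i)$: one of $\spline{g}_i(w)$ and $\spline{g}_{\bar i}(w)$ equals $x_i$ (or $-x_i$, which contributes the same after the subtraction) and the other is $0$, giving $x_i$ in either case. The second identity I would obtain by rewriting $\sum_{j\in[n]}\spline{r}_j(w)$ as $\sum_{i\in[n]}\bigl(\mathbb{1}[w^{-1}(i)>0]-\mathbb{1}[w^{-1}(i)<0]\bigr)x_i$, subtracting from $\sum_{i\in[n]}x_i$, and recognizing the result as $2\sum_{i\in[n]}\spline{g}_i(w)$. The only mildly delicate step is keeping signs straight in this last manipulation, using the convention $x_{\bar i}=-x_i$; that is the spot where a careless case check would go wrong, so I would do that case analysis most carefully.
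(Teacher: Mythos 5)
Your proposal is correct and follows essentially the same route as the paper's proof: pointwise evaluation at an arbitrary $w$, identifying the unique nonzero summand for \textsc{(1)}, the index set $\{w(1),\dots,w(i)\}$ for \textsc{(2)}, a case split on the location of $w^{-1}(k)$ with telescoping for \textsc{(3)} and \textsc{(4)}, and a sign case-analysis for \textsc{(5)}. No substantive differences to report.
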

\begin{proof}
    The first relation is straightforward. \par 
    Consider the second relation, and pick an arbitrary $w \in \Wn$. Then 
    \begin{align*}
        \sum_{k \in [\overline{n}]} \spline{y}_{i,k}(w) &= \sum_{k \in [\overline{n}]}\begin{cases}
            x_k-x_{w(i+1)} &\text{if } w^{-1}(k) \in [i] \\
            0 & \text{otherwise}
        \end{cases} \\
        &= \sum_{k \in \{w(1),\ldots ,w(i)\} } \left(x_k-x_{w(i+1)} \right)\\
        &= \sum_{j \in [i]}x_{w(j)} - ix_{w(i+1)} \\
        &= \sum_{j \in [i]} \spline{r}_{j}(w)-i\spline{r}_{i+1}(w).
        \end{align*} 
    As for the third relation, pick an arbitrary $w \in \Wn$ and proceed based on cases for $k$.\\
    If $k \in w([p])$ then 
    \begin{align*}
        \spline{y}_{p,k}(w) + \sum_{p<i < m}\sum_{A \ni k} \spline{f}_i^A(w) &= x_k-x_{w(p+1)} + \sum_{p<i<m} x_{w(i)}-x_{w(i+1)} \\
        &= x_k - x_{w(m+1)}.
    \end{align*}
    If $k \in w(\{p+1,\ldots ,m\})$ then $\spline{y}_{p,k}(w) = 0$, and 
    \[ \sum_{p<i < m}\sum_{A \ni k} \spline{f}_i^A(w) = \sum_{w^{-1}(k)\leq i < m}\sum_{A \ni k} \spline{f}_i^A(w) = x_k  - x_{w(m+1)}.\]
    Finally, if $k \notin w([m])$ then $w$ is not in the support of any spline on either side, and the claim follows. \par 
    For the fourth relation, also pick an arbitrary $w \in \Wn$ and proceed base on cases for $k$. If $k \in w([p])$ then 
    \begin{align*}
        \spline{y}_{p,k}(w) + \sum_{p<i\leq n}\sum_{A \ni k} \spline{f}_i^A(w) &= x_k - x_{w(p+1)} + \left(\sum_{p<i<n} x_{w(i)} - x_{w(i+1)}\right) +x_{w(n)} \\
        &= x_k.
    \end{align*}
    If $k \in \{w(p+1),\ldots ,w(n)\}$ then $\spline{y}_{p,k}(w) = 0$, and 
    \[ \sum_{p<i \leq n}\sum_{A \ni k} \spline{f}_i^A(w) = \sum_{w^{-1}(k)\leq i < n}\sum_{A \ni k} \spline{f}_i^A(w) + x_{w(n)} = x_k.\]
    On the other hand if $k \notin w([n])$ then $w$ is not in the support of any spline in the sum and so it evaluates to zero. So
    \begin{align*}
    \spline{y}_{p,k}(w) + \sum_{p<i < m}\sum_{A \ni k} \spline{f}_i^A(w) &= \begin{cases}
        x_k & w^{-1}(k) >0 \\
        0 & w^{-1}(k) <0
    \end{cases}\\
    &= -\spline{g}_{\bar{k}}.
    \end{align*}
    For the last relations, we directly compute 
    \[
    \spline{g}_i(w) - \spline{g}_{\overline{i}}(w) = \begin{cases}
        x_i &\text{if }w^{-1}(i) < 0 \\
        -x_{\overline{i}} &\text{if }w^{-1}(i) > 0
    \end{cases} = \spline{t}_i(w)
    \]
    and 
    \begin{align*}
    2\sum_{j \in [n]} \spline{g}_j (w) &= 2\sum_{j \in [n]}\left.\begin{cases} x_j &\text{if }w^{-1}(j) < 0 \\ 0 &\text{otherwise} \end{cases}\right\}
     = 2\sum_{j \in \Neg(w)}x_j 
     = \sum_{j \in [n]} \spline{t}_j(w)-\spline{r}_j(w).
    \end{align*}
\end{proof}

One may let $p=0$ in Proposition \ref{prop:relations}(3), so $\spline{y}_{p,k} = 0$, and the proof still holds. \par 

These families of splines interact well with the dot action.
\begin{lemma}\label{lem:dotact_most}
    Let $w \in \Wn$. Then 
    \[ w\cdot \spline{t}_i = \spline{t}_{w(i)},\;\;\; 
    w \cdot \spline{r}_i = \spline{r}_{i},\;\;\; w \cdot \spline{f}_i^A = \spline{f}_{i}^{w(A)},\;\;\; w \cdot \spline{y}_{i,k} = \spline{y}_{i,w(k)},\;\text{and }w \cdot \spline{g}_i = \spline{g}_{w(i)}.\]
\end{lemma}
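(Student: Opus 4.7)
The proof is a direct unpacking of the dot-action definition $w\cdot\spline{\rho}(v) = w\,\spline{\rho}(w^{-1}v)$, which says: evaluate $\spline{\rho}$ at $w^{-1}v$ to obtain a polynomial in $\poly$, then permute and sign its variables according to $w$ via (\ref{eqn:wn_action_poly}). For each of the five families the argument has the same flavor, so I would verify them in order of increasing complexity.

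The cases of $\spline{t}_i$ and $\spline{r}_i$ are nearly immediate. Since $\spline{t}_i$ is constant on $\Wn$ with value $x_i$, we get $w\cdot\spline{t}_i(v) = w \cdot x_i = x_{w(i)} = \spline{t}_{w(i)}(v)$. For $\spline{r}_i$, the computation $w\cdot\spline{r}_i(v) = w(x_{(w^{-1}v)(i)}) = x_{w((w^{-1}v)(i))} = x_{v(i)} = \spline{r}_i(v)$ shows that $\spline{r}_i$ is $\Wn$-invariant.

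For the three remaining families, the key is to translate each support condition through the substitution $v \mapsto w^{-1}v$. For $\spline{f}_i^A$, observe that $(w^{-1}v)([i]) = A$ if and only if $v([i]) = w(A)$; and when this holds, applying $w$ to $x_{(w^{-1}v)(i)} - x_{(w^{-1}v)(i+1)}$ yields $x_{v(i)} - x_{v(i+1)}$ (and similarly for the single-variable case $i=n$), matching $\spline{f}_i^{w(A)}(v)$ exactly. For $\spline{y}_{i,k}$, the support condition $(w^{-1}v)^{-1}(k) \in [i]$ rewrites as $v^{-1}(w(k)) \in [i]$; acting by $w$ on $x_k - x_{(w^{-1}v)(i+1)}$ yields $x_{w(k)} - x_{v(i+1)}$, which is $\spline{y}_{i,w(k)}(v)$. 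Finally, for $\spline{g}_i$ the support condition $(w^{-1}v)^{-1}(i) < 0$ rewrites as $v^{-1}(w(i)) < 0$, and $w(x_i) = x_{w(i)}$, giving $\spline{g}_{w(i)}(v)$.

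No step constitutes a real obstacle; this lemma is a bookkeeping exercise. The only place to be attentive is the distinction between the action of $w$ on $v$ (composition of signed permutations, which controls support conditions) and the action of $w$ on a polynomial (which acts on variable subscripts), so that identities such as $w((w^{-1}v)(j)) = v(j)$ and $(w^{-1}v)^{-1}(k) = v^{-1}(w(k))$ are applied correctly, with signs handled via $x_{\bar j} = -x_j$ whenever the subscripts lie in $[\bn]$.
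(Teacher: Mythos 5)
Your proof is correct and follows the same route as the paper: evaluate each identity at an arbitrary group element, unpack the dot action, translate the support condition through $v \mapsto w^{-1}v$ (using $(w^{-1}v)^{-1} = v^{-1}w$), and apply $w$ to the resulting polynomial. The paper's proof is exactly this direct computation, written out case by case.
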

\begin{proof}
    We evaluate each of the claimed equalities at an arbitrary $u \in \Wn$. The first two are straightforward. As for the others, we compute directly:
    \begin{align*}
    w \cdot \spline{f}_i^A(u) &= w\spline{f}_i^A(w^{-1}u) \\
        &= \begin{cases}
            w(x_{w^{-1}u(i)}-x_{w^{-1}u(i+1)}) &\text{if }w^{-1}u([i]) = A  \text{ and } i < n \\
            w(x_{w^{-1}u(n)}) &\text{if }w^{-1}u([i]) = A  \text{ and } i = n \\
            0 &\text{otherwise.}
            \end{cases}\\
        &= \begin{cases}
            x_{u(i)}-x_{u(i+1)} &\text{if }u([i]) = w(A)  \text{ and } i < n \\
            x_{u(n)} &\text{if }u([i]) = w(A)  \text{ and } i = n \\
            0 &\text{otherwise.}
            \end{cases}\\
        &= \spline{f}_i^{w(A)}(u) \\        
    w \cdot \spline{y}_{i,k}(u) &= w \spline{y}_{i,k} (w^{-1}u) \\
        &=  \begin{cases} 
        w(x_k - x_{w^{-1}u(i+1)})&\text{ if }(w^{-1}u)^{-1}(k) \in [i]\}\\
        0&\text{ otherwise}.
        \end{cases}\\
        &=  \begin{cases} 
        x_{w(k)} - x_{u(i+1)})&\text{ if }u^{-1}w(k) \in [i]\\
        0&\text{ otherwise}.
        \end{cases}\\
        &= \spline{y}_{i,w(k)}(u)\\
    w \cdot \spline{g}_i (u) &=  w\spline{g}_i (w^{-1}u) \\
         &= \begin{cases}
        w(x_i) & \text{if } (w^{-1}u)^{-1}(i) < 0 \\
        0   &\text{otherwise.}
        \end{cases} \\
        &= \begin{cases}
        x_{w(i)} & \text{if } u^{-1}w(i) <0 \\
        0   &\text{otherwise.}
        \end{cases}\\
        &=  \spline{g}_{w(i)} (u).
    \end{align*}
\end{proof}
The $\spline{h}$-spline is not quite as well-preserved by the dot action, but with a slight alteration can be made so.
\begin{lemma}\label{lem:dotact_h}
    Let $w \in \Wn$. Then 
    \[w \cdot \spline{h} = \begin{cases}
        \spline{r}_n-\spline{h} &\text{if }\abs{\Neg(w)}\text{ is odd,}\\
        \spline{h} &\text{if }\abs{\Neg(w)}\text{ is even.}
    \end{cases}\]
    Moreover, 
        \[w \cdot\left(\spline{r}_n-2\,\spline{h}\right) = \begin{cases}
        2\,\spline{h}-\spline{r}_n &\text{if }\abs{\Neg(w)}\text{ is odd,}\\
        \spline{r}_n-2\,\spline{h} &\text{if }\abs{\Neg(w)}\text{ is even.}
    \end{cases}\]
\end{lemma}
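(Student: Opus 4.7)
\medskip

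My plan is to evaluate $(w\cdot\spline{h})(u)$ pointwise at an arbitrary $u \in \Wn$ and argue by cases on the parity of $|\Neg(w)|$. By the definition of the dot action, $(w\cdot\spline{h})(u) = w\,\spline{h}(w^{-1}u)$, and $\spline{h}(w^{-1}u)$ equals $x_{w^{-1}u(n)}$ when $|\Neg(w^{-1}u)|$ is odd and is zero otherwise. Applying $w$ to $x_{w^{-1}u(n)}$ gives $x_{u(n)}$, with the usual convention that $x_{\bar{j}} = -x_j$ is absorbed by the fact that the index $w(w^{-1}u(n)) = u(n)$ is taken with sign.

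The key combinatorial input is that the parity of $|\Neg(\cdot)|$ is additive in the group: $|\Neg(vw)| \equiv |\Neg(v)| + |\Neg(w)| \pmod{2}$, and in particular $|\Neg(w^{-1})| \equiv |\Neg(w)| \pmod{2}$. This is immediate because $\bm{\delta}(w) = (-1)^{|\Neg(w)|}$ is the character of a one-dimensional representation of $\Wn$ (it is the composition of the quotient to the abelianization factor $(\Z/2)^n/(\text{sum mod }2)$ of the wreath-product presentation, or equivalently the map $s_i \mapsto 1$ for $i < n$ and $s_n \mapsto -1$). Using this, $|\Neg(w^{-1}u)|$ is odd if and only if $|\Neg(w)| + |\Neg(u)|$ is odd.

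Now I split into cases. If $|\Neg(w)|$ is even, then $|\Neg(w^{-1}u)|$ odd is equivalent to $|\Neg(u)|$ odd, so $(w\cdot\spline{h})(u) = x_{u(n)}$ exactly when $|\Neg(u)|$ is odd, which matches $\spline{h}(u)$ at every $u$. If $|\Neg(w)|$ is odd, then $(w\cdot\spline{h})(u) = x_{u(n)}$ exactly when $|\Neg(u)|$ is even, and one checks that this is precisely $(\spline{r}_n - \spline{h})(u)$, since $\spline{r}_n(u) = x_{u(n)}$ always while $\spline{h}(u) = x_{u(n)}$ only on the odd $|\Neg(u)|$ elements.

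For the ``moreover'' statement, I use linearity of the dot action together with Lemma \ref{lem:dotact_most}, which gives $w\cdot\spline{r}_n = \spline{r}_n$. Substituting the two cases just proved yields $w\cdot(\spline{r}_n - 2\spline{h}) = \spline{r}_n - 2\spline{h}$ when $|\Neg(w)|$ is even, and $\spline{r}_n - 2(\spline{r}_n - \spline{h}) = 2\spline{h} - \spline{r}_n$ when $|\Neg(w)|$ is odd. The only subtlety is the parity additivity of $|\Neg(\cdot)|$ noted above; after that, the rest is bookkeeping.
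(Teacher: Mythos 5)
Your proof is correct and follows essentially the same route as the paper's: the pointwise evaluation, the parity additivity $\abs{\Neg(w^{-1}u)} \equiv \abs{\Neg(w)} + \abs{\Neg(u)} \pmod 2$, the identification of $\spline{r}_n - \spline{h}$ as the indicator of even $\abs{\Neg(\cdot)}$, and linearity plus $w\cdot\spline{r}_n = \spline{r}_n$ for the ``moreover'' part. You have simply filled in the details the paper leaves terse (and stated the parity fact in its correct additive form, where the paper's phrasing as a product appears to be a typo).
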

\begin{proof}
    This computation follows from two facts. First, the parity of $\abs{\Neg(wv)}$ is equal to the parity of $\abs{\Neg(w)}\cdot \abs{\Neg(v)}$. Second, 
    \[\left[\spline{r}_n-\spline{h}\right](v) = \begin{cases}
    x_{w(n)} &\text{if }\abs{\Neg(w)} \text{ is even} \\
    0 &\text{otherwise.}
\end{cases}\]
The “moreover” part is linear algebra, and is included because it makes the corresponding one-dimensional representation of $\Wn$ more clear.
\end{proof}

\section{Generators for \texorpdfstring{$\splines{H}^1$}{SplinesH1}}\label{sec:generators}
First, this section argues that the $\spline{f}$-, $\spline{y}$-, $\spline{g}$-, and/or $\spline{h}$-splines from Section \ref{sec:linear_splines} form a $\C$-generating set for $\splines{H}^1$. Then, relations from Proposition \ref{prop:relations} are used to reduce the number of generators in preparation for building different bases in Section \ref{sec:bases_reps}. \par 
\begin{lemma}\label{lem:shortelts}
    The set of shortest elements in the supports of the splines\begin{center}
    \renewcommand{\arraystretch}{1.75}
    \begin{tabular}{rlcl}
        (1) & $\ds \left\{\sum_{i=1}^k \spline{r}_i-\spline{t}_i\right\}$ & is & $\{s_k\}$, \\
        (2) & $\ds\left\{\spline{f}_i^A \mid \abs{A} = i,\;\; A\text{ is unbalanced}\right\}$ & is & $\Wi$,\\
        (3) & $\ds\left\{\spline{y}_{i,k} \mid k \in [\bn] \setminus[i] \right\}$ & is & $\{s_{k-1} \cdots s_i \mid k > i\}\cup \{s_{\abs{k}}\cdots s_n\cdots s_i \mid k<0\}$, \\
        (4) & $\ds \left\{\spline{t}_k - \spline{r}_{i+1} - \spline{y}_{i,k} \mid k \in [i]\right\}$ & is & $\{s_k \cdots s_{i+1} \mid k \in [i]\}$,\\
        (5) & $\ds \left\{\spline{g}_i \mid i \in [n]\right\}$  & is & $\{s_i \cdots s_n \mid i \in [n]\}$, and\\
        (6) & $\ds \left\{\spline{h}-\frac{1}{2}\sum_{i \in [n]} (\spline{r}_i - \spline{t}_i)\right\}$ & is & $\{s_ns_{n-1}\}$.\\
    \end{tabular}
    \end{center}
\end{lemma}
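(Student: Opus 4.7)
The plan is to handle each of the six families individually. The strategy in every case is to first compute the support of the given spline expression directly from the piecewise definitions in Section \ref{sec:linear_splines}, and then identify the shortest element in that support either via the coset structure of a Young subgroup (for (2) and (5)) or via a length argument tracking how simple reflections move particular values (for (1), (3), (4), (6)).

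Items (2) and (5) are essentially group-theoretic. By construction, $\spline{f}_i^A$ is supported exactly on $\{w\in\Wn : w([i])=A\}$, a single left coset of $\mathfrak{S}_i\times\mathfrak{W}_{n-i}$, whose unique shortest representative lies in $\mathfrak{W}^i$; as $A$ varies over all unbalanced $i$-subsets of $[\bn]$, all cosets — and hence all of $\mathfrak{W}^i$ — are produced. For (5), $\spline{g}_i$ is supported on $\{w : w^{-1}(i)<0\}$, equivalently on those $w$ whose one-line notation contains $-i$; since only $s_n$ can introduce a negative entry (and only at position $n$), the shortest such $w$ is obtained by the word $s_i s_{i+1}\cdots s_{n-1} s_n$ of length $n-i+1$.

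For items (1) and (6), direct evaluation and cancellation identify the support. For (1), $\bigl(\sum_{i=1}^k (\spline{r}_i-\spline{t}_i)\bigr)(w) = \sum_{i=1}^k x_{w(i)} - \sum_{i=1}^k x_i$ vanishes iff the signed multiset $\{w(1),\ldots,w(k)\}$ equals $\{1,\ldots,k\}$ — equivalently, $w$ lies in the Young subgroup $\mathfrak{S}_k\times\mathfrak{W}_{n-k}$ — and the unique length-one element outside this subgroup is $s_k$. For (6), a short computation gives $\tfrac{1}{2}\sum_{i\in[n]}(\spline{r}_i-\spline{t}_i)(w) = -\sum_{i:\, w(i)<0} x_{|w(i)|}$, so the combination evaluates to $\spline{h}(w)+\sum_{i:\, w(i)<0} x_{|w(i)|}$. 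A case analysis on the parity of $\abs{\Neg(w)}$ and the sign of $w(n)$ shows this vanishes precisely when $w$ has no negative value in positions $1,\ldots,n-1$; a quick check of the elements of length at most two then identifies $s_n s_{n-1}$ as the unique shortest element of the support.

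For (3) and (4), the support of $\spline{y}_{i,k}$ is immediately $\{w : k\in w([i])\}$, and a telescoping cancellation shows the combination in (4) is supported on $\{w : k\notin w([i+1])\}$. The shortest elements are then identified by a position-tracking length argument: set $\mathrm{pos}(k)=|w^{-1}(k)|$ and observe that each simple reflection changes $\mathrm{pos}(k)$ by at most one, while a sign change of $w^{-1}(k)$ requires an application of $s_n$ at a moment when $|w^{-1}(k)|=n$. These constraints force any $w$ with $k\in w([i])$ to have length at least $k-i$ for $k>i$ (achieved by $s_{k-1}\cdots s_i$) and at least $2n-|k|-i+1$ for $k<0$ (achieved by $s_{|k|}\cdots s_n\cdots s_i$); an analogous bound with $\mathrm{pos}$ going \emph{outward} establishes the claim in (4). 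The main obstacle is this length-minimality argument for (3) with $k<0$: one must carefully track how each simple reflection affects both the absolute value and the sign of $w^{-1}(k)$ to rule out shorter realizations via clever reuse of reflections.
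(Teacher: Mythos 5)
Your proposal is correct and follows essentially the same route as the paper's proof: identify each support explicitly from the piecewise definitions, then locate the minimal element via coset structure (for (2), (5)) or by tracking how simple reflections move the relevant value (for (3), (4)). You supply somewhat more detail than the paper does — notably the explicit support computation showing the spline in (6) vanishes exactly when $w$ has no negative entry in positions $1,\ldots,n-1$, and the trajectory lower bound for (3) — but the underlying argument is the same.
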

\begin{proof}
    We leave (1) to the reader. The splines in (2) are supported on cosets of $\mathfrak{S}_i \times \mathfrak{W}_{n-i}$, and so have shortest support element $\Wi$ by definition. \par 
    The shortest element in the support of $\spline{y}_{i,k}$ is the shortest element $w$ such that $w^{-1}(k) \in [i]$. If $k \in [\bn]\setminus [i]$, then $w^{-1} = s_i \cdots s_{k-1}$ if $k>0$ and $s_i \cdots s_n \cdots s_k$ if $k<0$. The inverses are $s_{k-1} \cdots s_i$ and $s_k\cdots s_n\cdots s_i$ respectively, and (3) follows. \par 
    Consider $k \in [i]$ and the shortest element $w$ in the support of $\spline{t}_k - \spline{r}_{i+1} - \spline{y}_{i,k}$. Surely $w^{-1}(k) \notin [i]$, but \textbf{also} $k \neq w(i+1)$. In particular, we need $w^{-1}(i) \notin [i+1]$. So the shortest element in the support must have inverse $w^{-1} = s_{i+1}\cdots s_k$, and we have (4). \par 
    The support of $\spline{g}_i$ for $i \in [n]$ are those $w \in \Wn$ such that $w^{-1}(i) < 0$. Since $s_n$ is the only simple transposition that negates elements in $w(n)$, the shortest element $w$ in the support of $\spline{g}_i$ must have $w(n) = \bi$. The shortest element that satisfies this condition is $s_i \cdots s_n$, which is (5). \par 
    The spline $\spline{h}$ is supported at $s_n$, $s_ns_{n-1}$, $s_{n-1}s_n$, and no other elements of length $\leq 2$. The same is true for $\sum \spline{t}_i-\spline{r}_i$. Now \[-\spline{h}(s_n) = x_n = \frac{1}{2}\sum(\spline{t}_i-\spline{r}_i)(s_n)\]
    and 
    \[-\spline{h}(s_{n-1}s_n) = x_{n-1} = \frac{1}{2}\sum(\spline{t}_i-\spline{r}_i)(s_{n-1}s_n)\]
    but 
    \[\spline{h}(s_ns_{n-1}) = x_{n-1} \text{ and } \frac{1}{2}\sum(\spline{t}_i-\spline{r}_i)(s_ns_{n-1}) = x_{n}\]
    So the difference  has shortest support $s_ns_{n-1}$, and we have (6).
\end{proof}
\begin{definition}\label{def:itypes}
Consider $S(H)$ and $i \in [n]$. We call $i$\begin{itemize}
    \item \emph{uncovered} if $\begin{cases} \{t_{i-1},t_i\} \cap S(H) = \emptyset &\text{and } i \neq n-1\text{, or} \\ \{t_{n-1},t_{n-1},t_{n}\} \cap S(H) = \emptyset &\text{and } i = n-1\end{cases}$
    \item \emph{surrounded} if $i \in [n-2]$ and $\{t_{i-1},t_i\} \cap S(H) = \{t_{i-1}\}$ and $t_m \in S(H)\text{ for some } m>i$,
    \item \emph{shaded} if $t_i \in S(H)$ or if $t_n \in S(H)$ and $i=n-1$.
\end{itemize}
\end{definition}
Let 
\begin{equation}\label{eq:genset}
    \begin{aligned}
\spline{T} &\coloneqq \{\spline{t}_i \mid i \in [n]\} \\
\spline{R} &\coloneqq \{\spline{r}_i \mid i \in [n]\} \\
\spline{F}_H &\coloneqq \{\spline{f}_i^A \mid i \text{ is uncovered, }\abs{A} = i\text{, and $A$ is unbalanced}\} \\
\spline{Y}_H &\coloneqq \left\{\spline{y}_{i,k} \mid i\text{ is surrounded and }k\in [\bn] \right\} \\
\spline{G}_H &\coloneqq \begin{cases}
    \{\spline{g}_i \mid i \in [n] \} &\text{if $t_n \notin S(H)$}\\
    \{\spline{h} \} &\text{if }\{t_{n-1},t_n\} \cap S(H) = \{t_{n}\}\\
    \emptyset &\text{otherwise.}
\end{cases}
\end{aligned}
\end{equation}

\begin{proposition}\label{prop:generators}
    The set of splines 
    \[
    \spline{T} \cup \spline{R}\cup\spline{F}_H\cup 
\spline{Y}_H\cup \spline{G}_H
    \]
    from \eqref{eq:genset} is a $\C$-generating set for $\splines{H}^1$.
\end{proposition}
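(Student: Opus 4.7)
The plan is to exhibit an upper-triangular basis for $\splines{H}^1$ contained in the $\C$-span of $\spline{T} \cup \spline{R} \cup \spline{F}_H \cup \spline{Y}_H \cup \spline{G}_H$. By Proposition \ref{prop:dim1}, $\dim_\C \splines{H}^1 = n + \sum_i |\hdes{i}|$, and by the ``moreover'' clause of Proposition \ref{prop:geom_properties}, it is enough to find, for each $w \in \{e\}\sqcup \bigsqcup_i \hdes{i}$, an element of the span whose shortest-support element is exactly $w$ and which is nonzero there. The $n$ splines $\spline{t}_1,\dots,\spline{t}_n$ handle $w = e$, while the combinations $\sum_{k=1}^i(\spline{r}_k-\spline{t}_k) \in \spline{R}\cup\spline{T}$ handle each $w = s_i$ by Lemma \ref{lem:shortelts}(1).

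For the remaining $w \in \hdes{i}\setminus\{s_i\}$, I proceed by the cases of Propositions \ref{prop:hdes_i}, \ref{prop:hdes_n1}, and \ref{prop:hdes_n} aligned with the classification of $i$ in Definition \ref{def:itypes}. When $i$ is uncovered, Lemma \ref{lem:shortelts}(2) bijects $\hdes{i} = \Wi\setminus\{e\}$ with the nontrivial splines $\spline{f}_i^A \in \spline{F}_H$. When $i$ is surrounded, Lemma \ref{lem:shortelts}(3) bijects $\hdes{i}$ with $\{\spline{y}_{i,k} : k \in [\bn]\setminus[i]\} \subset \spline{Y}_H$. When $i$ is shaded with $t_{i-1}\notin S(H)$, Lemma \ref{lem:shortelts}(4) produces splines $\spline{t}_k-\spline{r}_i-\spline{y}_{i-1,k}$ with shortest support $s_k\cdots s_i \in \hdes{i}$. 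The edge cases at $i=n-1$ and $i=n$ follow the same recipe, with $\spline{g}$-splines from $\spline{G}_H$ or the $\spline{h}$-based combination of Lemma \ref{lem:shortelts}(6) filling in.

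The main obstacle is that several of these constructions call for a spline $\spline{y}_{p,k}$ that is not itself in $\spline{Y}_H$, typically because $p$ is not surrounded. I handle these by telescoping via Proposition \ref{prop:relations}. Letting $q$ be the largest surrounded index strictly below $p$---or $q=0$ with the convention $\spline{y}_{0,k}=0$ if none exists---part (3) gives
\[
\spline{y}_{p,k} \;=\; \spline{y}_{q,k} + \sum_{q<j\le p}\ \sum_{A\ni k}\spline{f}_j^A,
\]
while part (4) alternatively runs the telescope upward to $n$ and terminates at $-\spline{g}_{\bar k}$, with $\spline{g}_{\bar k}$ traded for $\spline{g}_k-\spline{t}_k$ via part (5) when $k>0$. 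The combinatorial heart of the argument is verifying that in the chosen telescope every intermediate index $j$ must be uncovered, so that $\spline{f}_j^A \in \spline{F}_H$; this follows because $q$ is chosen maximal surrounded and $p$ is not surrounded, which together with Definition \ref{def:itypes} force $t_{j-1},t_j \notin S(H)$ throughout $(q,p)$. Assembling these splines over all $w$ in $\{e\}\sqcup\bigsqcup_i \hdes{i}$ produces a minimal upper-triangular collection of size $\dim_\C \splines{H}^1$, hence a basis, which shows that the generating set spans.
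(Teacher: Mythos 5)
Your proposal is correct and follows essentially the same route as the paper: both match the elements of $\hdes{i}$ with the shortest-support splines of Lemma \ref{lem:shortelts} and then use Proposition \ref{prop:relations}(3)--(5) to trade the $\spline{y}$-splines at non-surrounded indices for elements of $\spline{F}_H\cup\spline{Y}_H\cup\spline{G}_H$, checking that the intermediate indices in the telescope are uncovered. The only difference is cosmetic --- the paper starts from a larger spanning set and prunes it down, whereas you expand each needed spline directly in terms of the target set.
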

\begin{proof}
    The set of elements in $\hdes{i}$ for $i \in [n]$ are given in Propositions \ref{prop:hdes_i}, \ref{prop:hdes_n1}, and \ref{prop:hdes_n}. By Lemma \ref{lem:shortelts}, the set of shortest support elements for the following set of splines contains $\hdes{i}$ for all $i \in [n]$:
    \begin{align*}
        \spline{F}_H &\cup \left\{\sum_{i=1}^k \spline{r}_i-\spline{t}_i \mid i \in [n]\right\}  \\
        &\cup \left\{\spline{y}_{i,k} \mid i \in [n-2],\;\;k \in [\bn],\;\; \{t_{i-1},t_i\} \cap S(H) = \{t_{i-1}\}    \right\} \\
        &\cup \left\{\spline{y}_{n-1,k} \mid k \in [\bn],\;\; \{t_{n-2},t_{n-1},t_n\} \cap S(H) = \{t_{n-2}\}    \right\} \\
        &\cup \left\{\spline{t}_k - \spline{r}_i - \spline{y}_{i-1,k} \mid i \in [n-2],\;\;k\in [\bn],\;\; \{t_{i-1},t_i\} \cap S(H) = \{t_{i}\}    \right\} \\
        &\cup \left\{\spline{t}_k - \spline{r}_{n-1} - \spline{y}_{n-2,k} \mid k\in [\bn],\; \emptyset \neq \{t_{n-2},t_{n-1},t_n\} \cap S(H) \subseteq \{t_{n-1},t_n\}     \right\} \\
        &\cup \{\spline{g}_i \mid i \in [n],\;\; t_n \notin S(H)\}\\
        &\cup \{\spline{h} \mid \{t_{n-1},t_n\} \cap S(H) = \{t_{n}\} \}.
    \end{align*}
    So these elements span $\splines{H}^1$. They are contained within the span of the following set:
    \begin{align*}
        \spline{T} \cup \spline{R} &\cup \spline{F}_H \\
        &\cup \left\{\spline{y}_{i,k} \mid i \in [n-2],\;\;k\in [\bn],\;\; \{t_{i-1},t_i\} \cap S(H) = \{t_{i-1}\}    \right\} \\
        &\cup \left\{\spline{y}_{n-1,k} \mid k \in [\bn],\;\; \{t_{n-2},t_{n-1},t_n\} \cap S(H) = \{t_{n-2}\}    \right\} \\
        &\cup \left\{\spline{y}_{i-1,k} \mid i \in [n-1],\;\;k\in [\bn],\;\; \{t_{i-1},t_i\} \cap S(H) = \{t_{i}\}    \right\} \\
        &\cup \left\{\spline{y}_{n-2,k} \mid k\in [\bn],\; \emptyset \neq \{t_{n-2},t_{n-1},t_n\} \cap S(H) \subseteq \{t_{n-1},t_n\}     \right\} \\
        &\cup \{\spline{g}_i \mid i \in [n],\;\; t_n \notin S(H)\}\\
        &\cup \{\spline{h} \mid \{t_{n-1},t_n\} \cap S(H) = \{t_{n}\} \}.
    \end{align*}
    If $i \in [n-2]$ and $\{t_{i-1},t_i\} \cap S(H) = \{t_{i}\}$ or $i=n-1$ and $\emptyset \neq \{t_{n-2},t_{n-1},t_n\} \cap S(H) \subseteq \{t_{n-1},t_n\}$, then either there exists some $p < i$ such that $\{t_{p-1},t_p\} \cap S(H) = \{t_{p-1}\}$ or $t_p \notin S(H)$ for all $p < i$. In both cases, the corresponding set $\left\{\spline{y}_{i-1,k} \mid k\in [\bn] \right\}$ may be removed without affecting the span by Proposition \ref{prop:relations}(3). \par 
    If $p \in [n-2]$ and $\{t_{p-1},t_p\} \cap S(H) = \{t_{p-1}\}$, or if $p=n-1$ and $\{t_{n-2},t_{n-1},t_n\} \cap S(H) = \{t_{n-2}\}$, and $t_m \notin S(H)$ for all $m > p$, then $t_n \notin S(H)$. In particular, $\spline{g}_i$ is in the set and so by Proposition \ref{prop:relations}(4,5) discard $\left\{\spline{y}_{p,k} \mid k\in [\bn] \right\}$. After discarding these particular $\spline{y}$-splines from our generating set, the following remains: 
    \begin{align*}
        \spline{T} \cup \spline{R} \cup \spline{F}_H &\cup \left\{\spline{y}_{i,k} \left| \begin{matrix}i \in [n-2],\;\;k\in [\bn],\;\; \{t_{i-1},t_i\} \cap S(H) = \{t_{i-1}\},\\ \exists m>i \text{ where }t_m \in S(H) \end{matrix}  \right.\right\} \\
        &\cup \{\spline{g}_i \mid i \in [n],\;\;t_n \notin S(H)\}\\
        &\cup \{\spline{h} \mid \{t_{n-1},t_n\} \cap S(H) = \{t_{n}\} \}.
    \end{align*}
    This is precisely $\spline{T} \cup \spline{R}\cup\spline{F}_H\cup 
\spline{Y}_H\cup \spline{G}_H$
\end{proof}
It can be helpful to associate the
\begin{itemize}
    \item $\spline{f}$-splines to uncovered elements
    \item $\spline{y}$-splines to intervals $[i < j]$ where $i \in [n-2]$ $\{t_{i-1},t_j\} \subset S(H)$ but $t_p \notin S(H)$ for all $i<p<j$,
    \item $\spline{g}$-splines to right-open intervals $(i,n)$ where $t_i \in S(H)$ but $t_p \notin S(H)$ for all $i<p\leq n$, and
    \item the  $\spline{h}$-spline to the specific case where $t_{n-1} \notin S(H)$ but $t_n \in S(H)$.
\end{itemize}

\section{Bases of \texorpdfstring{$\splines{H}^1$}{MH1} and the Left \& Right Representations}\label{sec:bases_reps}

The left representation $\lqot{H}$ is achieved by computing the dot action after quotienting the $\C$-vector space $\splines{H}^1$ by $\C\spline{T}$, and the right representation $\rqot{H}$ in the same manner by $\C\spline{R}$. We build two bases for $\splines{H}^1$ that are conducive to both the dot action on $\splines{H}^1$ as well as the quotients $\lqot{H}$ and $\rqot{H}$.\par
First, consider the special case that $H = \Delta$, as it is easier to separate. Furthermore, it is already known by other means \cite{Stembridge_Permuto}.
\begin{lemma}\label{lem:permuto_basis}
    If $H = \Delta$ then $\spline{F}_H$ is a basis for $\splines{H}^1$.
\end{lemma}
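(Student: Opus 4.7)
The plan is to prove this by matching cardinality with $\dim\splines{\Delta}^1$ and verifying linear independence, rather than re-running the spanning argument from Proposition \ref{prop:generators}.

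First I would observe that when $H=\Delta$, the set $S(H)=\{s_1,\ldots,s_n\}$ contains no $t_i$ (each such $t_i$ has length $3$), so in the sense of Definition \ref{def:itypes} \emph{every} $i\in[n]$ is uncovered. Hence $\spline{F}_H$ ranges over all pairs $(i,A)$ with $|A|=i$ and $A\subset[\bar n]$ unbalanced, giving
\begin{equation*}
|\spline{F}_H| \;=\; \sum_{i=1}^n \binom{n}{i}2^{i} \;=\; 3^n - 1.
\end{equation*}
Because no $t_\bullet$ lies in $S(\Delta)$, the bottom case of each of Propositions \ref{prop:hdes_i}, \ref{prop:hdes_n1}, and \ref{prop:hdes_n} applies, yielding $\hdes{i}=\Wi\setminus\{e\}$ of size $2^{i}\binom{n}{i}-1$. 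Proposition \ref{prop:dim1} then gives
\begin{equation*}
\dim \splines{\Delta}^1 \;=\; n + \sum_{i=1}^n\Bigl(2^{i}\binom{n}{i} - 1\Bigr) \;=\; 3^n - 1,
\end{equation*}
so the cardinalities match.

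The main substantive step, and the one I expect to require the most care, is linear independence. My plan is to fix an arbitrary $w\in\Wn$ and evaluate a putative relation $\sum c_{i,A}\spline{f}_i^A=0$ at $w$. Since $\spline{f}_i^A(w)\ne 0$ forces $A=w([i])$, this collapses to
\begin{equation*}
\sum_{i=1}^{n-1} c_{i,w([i])}\bigl(x_{w(i)}-x_{w(i+1)}\bigr) + c_{n,w([n])}\,x_{w(n)} = 0.
\end{equation*}
Writing each $x_{w(j)}=\mathrm{sign}(w(j))\,x_{|w(j)|}$ and using that $|w(1)|,\ldots,|w(n)|$ is a permutation of $[n]$ (so the $x_{|w(j)|}$ are genuinely linearly independent), I would collect the coefficient of each $x_{|w(j)|}$. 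The resulting telescoping system reads $c_{1,w([1])}=0$ together with $c_{i,w([i])}=c_{i-1,w([i-1])}$ for $2\le i\le n$, forcing every $c_{i,w([i])}$ to vanish. Since each unbalanced $A\subseteq[\bar n]$ of size $i$ arises as $w([i])$ for some $w\in\Wn$, letting $w$ range over $\Wn$ kills every coefficient.

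The only delicate point is the sign bookkeeping when expanding $x_{w(j)}$ as $\pm x_{|w(j)|}$, since each $x_{|w(j)|}$ receives contributions from at most two consecutive terms in the sum; aside from that the argument is a clean dimension count. Matching cardinality and linear independence then imply $\spline{F}_H$ is a basis.
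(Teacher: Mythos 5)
Your proof is correct, but it completes the dimension count with the opposite half of the argument from the paper's. Both you and the paper reduce to the count $\abs{\spline{F}_\Delta} = \sum_{i=1}^n 2^i\binom{n}{i} = 3^n-1 = \dim\splines{\Delta}^1$, the latter via Proposition \ref{prop:dim1} and the observation that every $i$ is uncovered, so $\hdes{i}=\Wi\setminus\{e\}$ has size $2^i\binom{n}{i}-1$. The paper then supplies the \emph{spanning} half: it starts from the generating set of Proposition \ref{prop:generators} and uses the linear relations of Proposition \ref{prop:relations} (together with the fact that the values $\spline{f}_i^{[i]}(e)$ generate the irrelevant ideal) to absorb $\spline{T}$, $\spline{R}$, and $\spline{G}_\Delta$ into the span of $\spline{F}_\Delta$, whence spanning plus cardinality gives a basis. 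You instead supply the \emph{linear independence} half: evaluating a putative relation at $w$ isolates the coefficients $c_{i,w([i])}$, and since $x_{\abs{w(1)}},\ldots,x_{\abs{w(n)}}$ are the $n$ distinct variables, comparing coefficients yields the telescoping system $c_{1,w([1])}=0$ and $c_{i,w([i])}=c_{i-1,w([i-1])}$ — the sign of $w(j)$ is a common factor of the (at most two) contributions to $x_{\abs{w(j)}}$ and so causes no trouble — and letting $w$ range over $\Wn$ kills every coefficient because each unbalanced $A$ of size $i$ equals $w([i])$ for some $w$. Your route is more self-contained, needing neither Proposition \ref{prop:generators} nor the relations, only the dimension formula; the paper's route is shorter on the page because it recycles machinery already built for the general case. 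Both are valid.
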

\begin{proof}
    The set $\spline{T} \cup \spline{F}_H$ is a generating set, and since $\left\{\spline{f}_i^{[i]}(e) \mid i \in [n]\right\} = \{x_i - x_{i+1},x_n \mid i \in [n-1]\}$ generates $\llangle x_1,\ldots,x_n\rrangle$, $\spline{F}_H$ is a generating set. Its size is $\sum_{i=1}^n \abs{\Wi} = n + \sum_{i=1}^n \left(\abs{\Wi}-1\right)$, and so it is a basis.
\end{proof}
\begin{proposition}\label{prop:permutohedral}
    Let $\bm{\chi}$ be the character of the defining representation of $\Wn$ (i.e. the action on $\C^n$). Let $\fh_i$ be the character of the action of $\Wn$ on the cosets of $\mathfrak{S}_i \times \mathfrak{W}_{n-i}$. Let $\mathbb{1}$ be the character of the trivial representation. Then 
   \[
     \lrep{\Delta}_1 = \sum_{i=1}^n \fh_i - \bm{\chi} 
    \hspace{1cm}\text{and}\hspace{1cm}
    \rrep{\Delta}_1 = \sum_{i=1}^n \fh_i - n\,\mathbb{1}.
    \]
\end{proposition}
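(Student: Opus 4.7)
The plan is to read off the character of $\splines{\Delta}^1$ from the $\Wn$-equivariant basis given in Lemma \ref{lem:permuto_basis}, and then quotient by the two canonical $\Wn$-submodules $\C\spline{T}$ and $\C\spline{R}$ to obtain the left and right representations.

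First, by Lemma \ref{lem:permuto_basis}, the set $\spline{F}_\Delta=\{\spline{f}_i^A \mid i\in[n],\ |A|=i,\ A \text{ unbalanced}\}$ is a $\C$-basis of $\splines{\Delta}^1$. By Lemma \ref{lem:dotact_most}, $w\cdot\spline{f}_i^A=\spline{f}_i^{w(A)}$, so the dot action preserves the index $i$ and permutes the $\spline{f}_i^A$'s for fixed $i$. This gives a $\Wn$-invariant direct sum decomposition $\splines{\Delta}^1=\bigoplus_{i=1}^n V_i$ with $V_i=\C\{\spline{f}_i^A\mid |A|=i,\ A \text{ unbalanced}\}$.

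Next, I would identify $V_i$ with a permutation representation. The map $w\mapsto w([i])$ is a $\Wn$-equivariant bijection from the left cosets $\Wn/(\mathfrak{S}_i\times\mathfrak{W}_{n-i})$ to unbalanced subsets of $[\bn]$ of size $i$ (two elements of $\Wn$ share the same image precisely when they lie in the same coset, as noted at the end of \S\ref{sec:one_inversion}). Consequently, the character of $V_i$ is $\fh_i$, and the character of $\splines{\Delta}^1$ is $\sum_{i=1}^n\fh_i$.

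For the left quotient, note that $\C\spline{T}$ is a $\Wn$-submodule of $\splines{\Delta}^1$ by Lemma \ref{lem:dotact_most} (and is $n$-dimensional, since $\spline{t}_i(e)=x_i$ shows the $\spline{t}_i$'s are linearly independent). Under the dot action, $w\cdot\spline{t}_i=\spline{t}_{w(i)}$, with the convention $\spline{t}_{\bar j}=-\spline{t}_j$ inherited from \eqref{eqn:wn_action_poly}; this is exactly the defining signed-permutation representation of $\Wn$ on $\C^n$, whose character is $\bm{\chi}$. Since $(\lqot{\Delta})_1=\splines{\Delta}^1/\C\spline{T}$, passing to characters yields $\lrep{\Delta}_1=\sum_{i=1}^n\fh_i-\bm{\chi}$. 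The right quotient is handled identically: $\C\spline{R}$ is a $\Wn$-submodule with $w\cdot\spline{r}_i=\spline{r}_i$, so it carries the trivial representation of dimension $n$, and $\rrep{\Delta}_1=\sum_{i=1}^n\fh_i-n\,\mathbb{1}$.

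There is no genuine obstacle here once Lemma \ref{lem:permuto_basis} and Lemma \ref{lem:dotact_most} are in hand; the only subtlety worth flagging is the sign convention making the basis $\{\spline{t}_1,\dots,\spline{t}_n\}$ carry $\bm{\chi}$ (signed permutation matrices) rather than the permutation character $\fh_1$, which explains the asymmetry between the $\bm{\chi}$ appearing in $\lrep{\Delta}_1$ and the $n\,\mathbb{1}$ appearing in $\rrep{\Delta}_1$.
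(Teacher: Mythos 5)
Your proposal is correct and follows essentially the same route as the paper: the paper's own proof simply invokes Lemma \ref{lem:permuto_basis} and records that $\C\spline{T}$ carries $\bm{\chi}$ and $\C\spline{R}$ carries $n\,\mathbb{1}$, which is exactly your argument with the intermediate steps (the decomposition of $\splines{\Delta}^1$ into the $V_i$ and the identification of each $V_i$ with the coset representation via $w\mapsto w([i])$, giving character $\sum_i\fh_i$) spelled out explicitly. Your remark on the sign convention $\spline{t}_{\bar j}=-\spline{t}_j$ correctly accounts for why the left quotient subtracts $\bm{\chi}$ rather than $\fh_1$.
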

\begin{proof}
    The representation on the subspace $\C \spline{T}$ has character $\bm{\chi}$ and the representation on the subspace $\C \spline{R}$ has character $n\,\mathbb{1}$.
\end{proof}
We note that $\fh_1-\bchi = \fs$
\begin{remark}
    The case $H = \Delta$ corresponds to the geometry of the \emph{permutohedral variety}. The natural generalization of $\spline{F}_H$ to cosets of all Young subgroups gives a generating set for $\splines{\Delta}$ in all degrees. It would be interesting to find a basis of $\splines{\Delta}$ in all degrees that is conducive to the dot action, as has been done in type A \cite{ChoHongLee_Bases}.
\end{remark}
For the remainder of the section, assume that $H \neq \Delta$. Since Theorem \ref{thm:character} is referenced by Theorem \ref{intthm:character}, we will restate this assumption in Theorem \ref{thm:character}. \par 
By Proposition \ref{prop:generators}, 
\[\spline{T} \cup \spline{R} \cup \spline{F}_H \cup \spline{Y}_H \cup \spline{G}_H\]
is a generating set for $\splines{H}^1$. 
\begin{lemma}\label{lem:left_basis}
    The set 
    \[
    \spline{T} \cup \left\{\spline{r}_i \mid i \text{ is shaded}\,\right\} \cup \spline{F}_H \cup \spline{Y}_H \cup \spline{G}_H
    \]
    is a basis for $\splines{H}^1$.
\end{lemma}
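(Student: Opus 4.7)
The plan is to verify two facts: (i) the proposed set spans $\splines{H}^1$, and (ii) its cardinality equals $\dim \splines{H}^1 = n + \sum_i |\hdes{i}|$ from Proposition~\ref{prop:dim1}. Because a spanning set of size equal to the dimension is automatically a basis, these two together complete the proof. By Definition~\ref{def:itypes} each index $i \in [n]$ is uncovered, surrounded, shaded, or lies in a fourth ``separated'' category where $\{t_{i-1},t_i\} \cap S(H) = \{t_{i-1}\}$ (or the $i \in \{n-1,n\}$ analogue) but no $m > i$ satisfies $t_m \in S(H)$; a short combinatorial argument shows at most one index is separated, since a second one would either contradict the no-$m>i$ condition or force some $t_{j} \in S(H)$ to also lie outside $S(H)$.

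For (i), I would start with the generating set $\spline{T} \cup \spline{R} \cup \spline{F}_H \cup \spline{Y}_H \cup \spline{G}_H$ from Proposition~\ref{prop:generators} and show that $\spline{r}_i$ for every non-shaded $i$ lies in the span of the remaining generators. The three parts of Proposition~\ref{prop:relations} are tailored exactly to the three non-shaded types: relation~(1) yields $\spline{r}_i - \spline{r}_{i+1} \in \mathrm{span}(\spline{F}_H)$ (or $\spline{r}_n \in \mathrm{span}(\spline{F}_H)$) for each uncovered $i$; relation~(2) yields $\sum_{j \leq i}\spline{r}_j - i\,\spline{r}_{i+1} \in \mathrm{span}(\spline{Y}_H)$ for each surrounded $i$; and the global relation $\sum_j(\spline{t}_j - \spline{r}_j) = 2\sum_j \spline{g}_j$ from~(5) handles the separated index (if any). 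That last relation is available precisely when a separated index exists, since separated-ness forces $t_n \notin S(H)$ and hence $\spline{G}_H = \{\spline{g}_j : j \in [n]\}$. Viewing these as a linear system in $r_1,\ldots,r_n$ modulo $V_0 \coloneqq \mathrm{span}(\spline{T} \cup \spline{F}_H \cup \spline{Y}_H \cup \spline{G}_H)$, each relation has a distinct non-shaded $r_i$ as its natural pivot (namely $r_i$ for the uncovered and surrounded cases, and $r_{i^*}$ for the separated one); row-reduction then expresses every non-shaded $\spline{r}_i$ uniquely in terms of the shaded $\spline{r}_j$'s modulo $V_0$. The combinatorial fact that a separated index sits immediately past the largest $m$ with $t_m \in S(H)$ ensures the cascade of eliminations is well-ordered.

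For (ii), I would run an index-by-index comparison using the explicit descriptions of $\hdes{i}$ from Propositions~\ref{prop:hdes_i}, \ref{prop:hdes_n1}, and~\ref{prop:hdes_n}. Locally, each uncovered $i$ contributes $|\Wi|$ to the proposed cardinality against $|\Wi|-1$ to $\sum|\hdes{i}|$; each surrounded $i$ contributes $2n$ against $2n-i$; each shaded $i$ contributes $1$ via $\spline{r}_i$ against a case-dependent $|\hdes{i}|$; and the separated index contributes $0$. The $\spline{T}$ piece supplies the leading $n$ in the dimension formula, and the surpluses from uncovered, surrounded, and $\spline{G}_H$ precisely cover the deficits from shaded and separated indices. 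The principal obstacle is bookkeeping the separated case, where relation~(5) couples all of $\spline{r}_1,\ldots,\spline{r}_n$ simultaneously; splitting on whether $t_n \in S(H)$ and on the position of the largest $m$ with $t_m \in S(H)$ settles the arithmetic in each regime, and the same placement lemma used in (i) guarantees that the $\spline{G}_H$ contribution correctly absorbs the separated deficit.
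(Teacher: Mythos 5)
Your proposal is correct and follows essentially the same route as the paper: reduce the generating set of Proposition~\ref{prop:generators} using the relations of Proposition~\ref{prop:relations} to eliminate the non-shaded $\spline{r}_i$, then check that the resulting cardinality matches $\dim\splines{H}^1 = n+\sum_i\abs{\hdes{i}}$ from Proposition~\ref{prop:dim1}; your per-index surplus/deficit bookkeeping is a reorganization of the paper's interval-by-interval count, and your explicit ``separated'' category makes precise what the paper handles implicitly through its right-open interval case. The one step I would tighten is the assertion that distinct pivots alone make the row reduction go through: the surrounded relation $\sum_{j\le i}\spline{r}_j - i\,\spline{r}_{i+1}$ couples $\spline{r}_i$ to indices on both sides of $i$, so the elimination is not triangular in the index order, and you should record that $i-1$ is necessarily shaded whenever $i$ is surrounded and that each uncovered chain terminates either at a shaded index, at $i=n$, or in the block absorbed by Proposition~\ref{prop:relations}(5), which is what makes the cascade well-founded.
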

\begin{proof}
    By Proposition \ref{prop:relations}(3), if $t_i \notin S(H)$ then every spline in $\{\spline{y}_{i,k} \mid k \in [\bn]\}$ is in the span of $\spline{F}_H\cup \spline{Y}_H$. By Proposition \ref{prop:relations}(2) $\spline{r}_i$ is also in the span, and so each such $\spline{r}_i$ may be removed. It follows that the proposed set is in fact a spanning set for $\splines{H}^1$. \par 
    We show this is a basis by dimension. Recall from Proposition \ref{prop:dim1} that the dimension of $\splines{H}^1$ is $\ds n + \sum_{i=1}^n \abs{\hdes{i}}$. Since $\abs{\spline{T}} = n$, it remains to show that 
    \[
    \abs{\left\{\spline{r}_i \mid i \text{ is shaded}\right\}} \cup \abs{\spline{F}_H} \cup \abs{\spline{Y}_H} \cup \abs{\spline{G}_H} = \sum_{i=1}^n \abs{\hdes{i}}.
    \]
    Each of the splines in this set are associated with a particular simple reflection $s_i$:
    \[\spline{f}_i^A \longleftrightarrow s_i,\hspace{1cm}\spline{y}_{i,k} \longleftrightarrow s_i,\hspace{1cm}\spline{g}_i \longleftrightarrow s_{n-1},\hspace{1cm}\spline{h} \longleftrightarrow s_n. \] We will show that the sum of $\hdes{i}$ and the number of associated splines are equal over certain intervals $\{p,\ldots ,m\}$ in $[n]$.\par 
    (Interval 1: totally covered) Consider $i \in [n]$ such that $\{t_{i-1},t_i\} \in S(H)$. For each of these single-element intervals, $\hdes{i} = \{s_i\}$ and these elements are naturally associated with the spline $\spline{r}_i$. In particular, 
    \[
    \abs{\left\{\spline{r}_i \mid i \in [n],\; \{t_{n-1},t_i\} \subset S(H) \right\}} = \sum_{\substack{i \in [n] \\ \{t_{i-1},t_i\} \subset S(H)}} \hdes{i}.
    \]
    (Interval 2: left-open) If $m \in [n]$, $t_i \notin S(H)$ for $i \in [m-1]$, and $t_{m} \in S(H)$, then the elements in  $\{1,\ldots ,m\}$ are associated to the splines 
    \[\{\spline{f}_i^A \mid i \in [m-1], \;\; \abs{A} = i,\text{ and $A$ is unbalanced}\} \cup \{\spline{r}_m\}.\] This set has size $\ds 1 + \sum_{i=1}^{m-1} \abs{\Wi}$. On the other hand, 
    \[
    \sum_{i=1}^m \hdes{i} = \sum_{i=1}^{m-1} \left(\abs{\Wi}-1\right) + m = 1 + \sum_{i=1}^{m-1} \abs{\Wi}.
    \]
    (Interval 3: strict internal) If $p < m < n$, where $t_i \notin S(H)$ for $i = p,\ldots ,m-1$ but $\{t_{p-1},t_m\} \subset S(H)$, then $\{p,\ldots ,m\}$ is associated with the splines
    \[\left\{ \spline{y}_{p,k} \mid k \in [\bn]\right\} \cup \left\{\spline{f}_i^A \mid i=p+1,\ldots ,m-1\right\} \cup \{\spline{r}_m\},\]
    of which there are $\ds 2n + 1 + \sum_{i=p+1}^{m-1}\abs{\Wi}$. On the other hand 
    \begin{align*}
    \sum_{i=p}^m \abs{\hdes{i}} &= 2n-p + m + \sum_{i=p+1}^{m-1} \left(\abs{\Wi}  -1\right)\\&= 2n-p+m - (m-1-p) + \sum_{i=p+1}^{m-1} \abs{\Wi} \\&= 2n+ 1 + \sum_{i=p+1}^{m-1} \abs{\Wi}.
    \end{align*}
    (Interval 4: weak internal) If $p < n-1$, where $t_i \notin S(H)$ for $i = p,\ldots ,n-1$ but $\{t_{p-1},t_n\} \subset S(H)$, then $\{p,\ldots ,n\}$ is associated with the splines
    \[\left\{ \spline{y}_{p,k} \mid k \in [\bn]\right\} \cup \left\{\spline{f}_i^A \mid i=p+1,\ldots ,n-2\right\} \cup \{\spline{r}_{n-1},\spline{h}\} \cup \{\spline{r}_n\},\]
    of which there are $\ds 2n + \sum_{i=p+1}^{n-2}\abs{\Wi} + 3$. On the other hand 
    \begin{align*}
    \sum_{i=p}^n \abs{\hdes{i}} &=(2n-p) + \sum_{i=p+1}^{n-2} (\abs{\Wi}-1) + n+1\\
        &=(2n-p) - (n-2-p) + \sum_{i=p+1}^{n-2} \abs{\Wi} + n+1\\
        &=  2n + \sum_{i=p+1}^{n-2}\abs{\Wi} + 3.
    \end{align*}
    (Interval 4: $n-1$ internal point) If $\{t_{n-2},t_{n-1},t_n\} \cap S(H) = \{t_{n-2},t_n\}$ then $\{n-1,n\}$ is associated with the splines
    \[\{\spline{r}_{n-1},\spline{h}\} \cup \{\spline{r}_n\},\]
    of which there are $\ds 3= 2+1 = \abs{\hdes{n-1}} +  \abs{\hdes{n}}$.\\
    (Interval 6: right-open) If $p < n$, $t_{p-1} \in S(H)$,  and $t_i \notin S(H)$ for all $i \geq p$, then $\{p,\ldots ,n\}$ is associated with the splines 
    \[
    \left\{\spline{f}_i^A \mid i=p+1,\ldots ,n\right\} \cup \{\spline{g}_i \mid i \in [n]\},
    \]
    of which there are $\ds n+ \sum_{i=p+1}^{n} \abs{\Wi} $. On the other hand, 
    \[
    \sum_{i=p}^n \abs{\hdes{i}} = 2n-p + \sum_{i=p+1}^{n} \left(\abs{\Wi}  -1\right) = 2n-p - (n-p) + \sum_{i=p+1}^{n} \abs{\Wi} = n + \sum_{i=p+1}^{n} \abs{\Wi}
    \]
    Since every $i \in [n]$ is contained in exactly one type of the intervals above for all $H$, the size of this spanning set matches the $\dim(\splines{H}^1)$, and so it is a basis.
\end{proof}
\begin{lemma}\label{lem:right_basis}
    For each $i$, Let $\left\{A_m \mid m \in \left[2^i\choos{n}{i}\right]\right\}$ be an enumeration of the unbalanced sets of size $i$. Then  
    \begin{align*}
    \spline{T} \cup \spline{R} &\cup \left\{\spline{f}_i^{A_m} - \spline{f}_i^{A_{m+1}} \left| \begin{matrix}\text{ $i$ is uncovered,} \\ m \in \left[2^i\choos{n}{i}-1\right]\end{matrix}\right.\right\} \cup \left\{\spline{y}_{i,k} - \spline{y}_{i,k+1} \left| \begin{matrix} i\text{ is surrounded},\\ \bn \leq k < n\end{matrix}\right.\right\} \\ &\cup \begin{cases}
    \{\spline{g}_i-\spline{g}_{i+1} \mid i \in [n-1] \} &\text{if $t_n \notin S(H)$}\\
    \{\spline{h} \} &\text{if }\{t_{n-1},t_n\} \cap S(H) = \{t_{n}\}\\
    \emptyset &\text{otherwise}
\end{cases}
    \end{align*}
    is a basis for $\splines{H}^1$.
\end{lemma}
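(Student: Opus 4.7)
The plan is to leverage Proposition \ref{prop:generators}, which furnishes $\spline{T} \cup \spline{R} \cup \spline{F}_H \cup \spline{Y}_H \cup \spline{G}_H$ as a spanning set for $\splines{H}^1$, and to use the relations in Proposition \ref{prop:relations} to trade each of the three blocks $\spline{F}_H$, $\spline{Y}_H$, and $\spline{G}_H$ for their consecutive-differences variants without changing the total span. The proposed set is exactly the result of these substitutions, with $\spline{R}$ retained in full.

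First I would verify spanning. For each uncovered $i$, Proposition \ref{prop:relations}(1) gives that $\sum_m \spline{f}_i^{A_m}$ equals $\spline{r}_i - \spline{r}_{i+1}$ (or $\spline{r}_n$), hence lies in the span of $\spline{R}$. This sum, together with the $2^i\choos{n}{i}-1$ consecutive differences $\spline{f}_i^{A_m} - \spline{f}_i^{A_{m+1}}$ present in the proposed set, forms a full-rank linear system in the $2^i\choos{n}{i}$ unknowns $\{\spline{f}_i^{A_m}\}$, so each $\spline{f}_i^{A_m}$ is recoverable as a combination of elements of the proposed set. An analogous argument using Proposition \ref{prop:relations}(2) (with sum $\sum_{k\in[\bn]} \spline{y}_{i,k} \in \mathrm{span}(\spline{R})$) handles each surrounded $i$, and Proposition \ref{prop:relations}(5) (with sum $2\sum_j \spline{g}_j = \sum_j(\spline{t}_j - \spline{r}_j) \in \mathrm{span}(\spline{T}\cup\spline{R})$) handles $\spline{G}_H$ in the case $t_n \notin S(H)$. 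The case $\spline{G}_H = \{\spline{h}\}$ is immediate, and the empty case is vacuous. Combined with Proposition \ref{prop:generators}, this establishes that the proposed set spans $\splines{H}^1$.

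It remains to check that the cardinality of the proposed set equals $\dim\splines{H}^1$. Measuring the size change against the basis from Lemma \ref{lem:left_basis}, the proposed set gains $n - |\{i:\text{shaded}\}|$ elements from enlarging $\{\spline{r}_i : i \text{ shaded}\}$ to $\spline{R}$, loses one per uncovered $i$ (from $\spline{F}_H$ to its differences), loses one per surrounded $i$ (from $\spline{Y}_H$ to its differences), and loses one more precisely when $t_n \notin S(H)$ (from $\spline{G}_H$ to the $\spline{g}$-differences). Equality of cardinalities therefore reduces to the combinatorial identity
\[
n \;=\; |\{i:\text{shaded}\}| + |\{i:\text{uncovered}\}| + |\{i:\text{surrounded}\}| + c,
\]
where $c = 1$ if $t_n \notin S(H)$ and $c = 0$ otherwise.

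The main obstacle I anticipate is this last identity. The three classes from Definition \ref{def:itypes} are pairwise disjoint, and a quick case check on the definitions shows their complementary ``other'' class consists exactly of those $i \in [n]$ with $t_{i-1} \in S(H)$ and $t_j \notin S(H)$ for all $j \geq i$. So the identity reduces to $|\{i:\text{other}\}| = c$: if $t_n \in S(H)$, then no $i$ can be ``other'' because $j = n$ violates the second condition, giving $c = 0$; if $t_n \notin S(H)$, then since $H \ne \Delta$, Corollary \ref{cor:t_transpositions} forces some $t_{j^\ast} \in S(H)$, and the unique ``other'' index is $j^\ast + 1$, where $j^\ast$ is maximal. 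This completes the counting and hence the proof.
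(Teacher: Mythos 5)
Your proof is correct and takes essentially the same route as the paper's: a change of basis from the spanning set of Proposition \ref{prop:generators} (equivalently from the basis of Lemma \ref{lem:left_basis}), trading each block's ``sum'' element for the corresponding $\spline{r}$'s via relations (1), (2), and (5) of Proposition \ref{prop:relations}. The paper's own proof is a two-line sketch of exactly this trade; your version additionally spells out the cardinality bookkeeping --- that $[n]$ is partitioned into shaded, uncovered, surrounded, and a single ``other'' index occurring precisely when $t_n \notin S(H)$ --- which the paper leaves implicit, and this accounting is verified correctly.
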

\begin{proof}
    This could be done exactly the same as the proof of Lemma \ref{lem:left_basis}, but it is easier to do a change of basis. The relation that gives an $\spline{r}_i$ for $i$ uncovered in return for $\sum_{m} f_i^{A_m}$ is Proposition \ref{prop:relations}(1), and the relation that gives an $\spline{r}_i$ for $i$ surrounded is Proposition \ref{prop:relations}(2). The relations that gives an $\spline{r}_n$ for $t_n \notin S(H)$ is \ref{prop:relations}(5).
\end{proof}
Recall Definition \ref{def:itypes} for the main result below.
\begin{theorem}\label{thm:character}
    Say $\Delta \subsetneq H$. Let $\bm{\chi}$ be the character of the defining representation of $\Wn$ (i.e. the action on $\C^n$). Let $\fh_i$ be the character of the action of $\Wn$ on the cosets of $\mathfrak{S}_i\times \mathfrak{W}_{n-i}$. Let $\fs$ be the character of the action of $\Wn$ on cosets of $\mathfrak{W}_1 \times \mathfrak{W}_{n-1}$. Let $\mathbb{1}$ be the character of the trivial representation and $\bm{\delta}$ be the character $w\mapsto (-1)^{\abs{\Neg(w)}}$. Then 
    \begin{align*}
     \lrep{H}_1 = \abs{\{i \in [n] \mid i\text{ shaded}\,\}}\mathbb{1} &+ \sum_{i\text{ uncovered}} \fh_i \\[6pt]&+ \abs{\{i \in [n-2] \mid i\text{ surrounded}\,\}}\fh_1 \\[6pt]&+ \begin{cases}
    \fs &\text{if $t_n \notin S(H)$}\\
    \bm{\delta} &\text{if }\{t_{n-1},t_n\} \cap S(H) = \{t_{n}\}\\
    0 &\text{otherwise.}
\end{cases}
    \end{align*}
    and 
        \begin{align*}
     \rrep{H}_1 = \bm{\chi} &+ \sum_{i\text{ uncovered}} \left(\fh_i - \mathbb{1}\right) \\[6pt]&+ \abs{\{i \in [n-2] \mid i\text{ surrounded}\,\}}\left(\fh_1-\mathbb{1}\right) \\[6pt]&+ \begin{cases}
    \fs-\mathbb{1} &\text{if $t_n \notin S(H)$}\\
    \bm{\delta} &\text{if }\{t_{n-1},t_n\} \cap S(H) = \{t_{n}\}\\
    0 &\text{otherwise.}
\end{cases}
    \end{align*}
\end{theorem}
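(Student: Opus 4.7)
The plan is to use the two bases constructed in Lemma \ref{lem:left_basis} and Lemma \ref{lem:right_basis} directly: the left basis is adapted so that quotienting by $\C\spline{T}$ yields a basis of $\lqot{H}_1$ block-by-block, and the right basis is adapted so that quotienting by $\C\spline{R}$ yields a basis of $\rqot{H}_1$ block-by-block. For each block I will show, via Lemmas \ref{lem:dotact_most} and \ref{lem:dotact_h}, that the $\Wn$-span of the block is invariant (modulo the ideal that is being quotiented) and has a character that is a permutation or sign character from the statement.

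For the left representation, modding out $\C\spline{T}$ leaves the basis pieces $\{\spline{r}_i : i\text{ shaded}\}$, $\spline{F}_H$, $\spline{Y}_H$, and $\spline{G}_H$. By Lemma \ref{lem:dotact_most} the $\spline{r}_i$ are pointwise invariant, contributing $\abs{\{i \text{ shaded}\}}\,\triv$. For each uncovered $i$, the block $\{\spline{f}_i^A : \abs{A}=i,\; A\text{ unbalanced}\}$ is permuted by $w \cdot \spline{f}_i^A = \spline{f}_i^{w(A)}$, and since the stabilizer of $[i]$ as an unbalanced subset is exactly $\mathfrak{S}_i \times \mathfrak{W}_{n-i}$, this block carries $\fh_i$. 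For each surrounded $i$, the block $\{\spline{y}_{i,k} : k \in [\bn]\}$ is permuted by $w \cdot \spline{y}_{i,k} = \spline{y}_{i,w(k)}$ with stabilizer of $1$ equal to $\mathfrak{W}_{n-1} = \mathfrak{S}_1 \times \mathfrak{W}_{n-1}$, giving $\fh_1$. Finally, when $t_n \notin S(H)$, the block $\{\spline{g}_i : i \in [n]\}$ together with the identity $\spline{g}_j - \spline{g}_{\bar j} = \spline{t}_j$ from Proposition \ref{prop:relations}(5) shows that, modulo $\spline{T}$, the action becomes $\spline{g}_i \mapsto \spline{g}_{|w(i)|}$, the signless permutation of $[n]$ with stabilizer $\mathfrak{W}_1 \times \mathfrak{W}_{n-1}$, hence character $\fs$; in the remaining case where only $\spline{h}$ appears, Lemma \ref{lem:dotact_h} shows that the two-dimensional subspace $\text{span}(\spline{r}_n, \spline{h})$ carries $\triv + \bm{\delta}$, so since the $\spline{r}_n$ is already accounted for under the shaded count, the complementary piece contributes exactly $\bm{\delta}$.

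For the right representation, modding out $\C\spline{R}$ leaves $\spline{T}$ together with difference bases. The block $\spline{T}$ carries the signed permutation character $\bm{\chi}$ by Lemma \ref{lem:dotact_most} and the identity $\spline{t}_{\bar j} = -\spline{t}_j$. For each uncovered $i$, the full $\spline{F}_H$-block has character $\fh_i$ exactly as in the left case, and the $\Wn$-invariant sum $\sum_A \spline{f}_i^A$ equals $\spline{r}_i - \spline{r}_{i+1}$ (or $\spline{r}_n$ when $i=n$) by Proposition \ref{prop:relations}(1), which lies in $\spline{R}$; so the sum-zero subspace, which is precisely the span of the differences $\spline{f}_i^{A_m} - \spline{f}_i^{A_{m+1}}$, carries $\fh_i - \triv$. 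The surrounded case is identical, using Proposition \ref{prop:relations}(2), and yields $\fh_1 - \triv$. For the $\spline{G}_H$ case with $t_n \notin S(H)$, I use the identity $\fh_1 = \bm{\chi} + \fs$ (which is immediate from the decomposition $\text{span}(\spline{G}) \supset \text{span}(\spline{T})$ with quotient signless-permutation on $[n]$); Proposition \ref{prop:relations}(5) shows that the $\Wn$-invariant $\sum_j \spline{g}_j$ is congruent to $\tfrac{1}{2}\sum \spline{t}_j$ modulo $\spline{R}$, so the $1$-dimensional trivial subspace generated by $\sum_j \spline{g}_j$ is killed in the right quotient, yielding $\fh_1 - \triv = \bm{\chi} + \fs - \triv$ from the combined $\spline{T} \cup \{\spline{g}_i - \spline{g}_{i+1}\}$ block, which separates cleanly into the $\bm{\chi}$ from $\spline{T}$ and $\fs - \triv$ from $\spline{G}_H$. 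The $\spline{h}$ subcase follows from Lemma \ref{lem:dotact_h}: after killing $\spline{r}_n \in \spline{R}$, the relation $w \cdot \spline{h} = (-1)^{\abs{\Neg(w)}} \spline{h}$ holds and gives $\bm{\delta}$.

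The main obstacle is the case involving $\spline{h}$ and $\spline{r}_n$, where neither spline alone spans a $\Wn$-invariant subspace of $\splines{H}^1$; the fix is to work with the two-dimensional subspace $\text{span}(\spline{r}_n, \spline{h})$ and use the eigenvector $\spline{r}_n - 2\spline{h}$ from Lemma \ref{lem:dotact_h} to extract the sign character cleanly. A secondary bookkeeping point is to verify, case by case on the intersection of $S(H)$ with $\{t_{i-1}, t_i\}$ (or $\{t_{n-2}, t_{n-1}, t_n\}$ at the boundaries), that each $i \in [n]$ is accounted for by exactly one of the shaded / uncovered / surrounded categories or by the terminal $\spline{G}_H$ interval, so that the total dimension of each proposed character matches the dimension formula of Proposition \ref{prop:dim1}; this cross-check is already built into the interval analysis of Lemma \ref{lem:left_basis} and so requires no additional argument.
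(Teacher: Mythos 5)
Your proposal is correct and follows essentially the same route as the paper: strip $\spline{T}$ from the basis of Lemma \ref{lem:left_basis} and $\spline{R}$ from that of Lemma \ref{lem:right_basis}, then read off the character block by block via Lemmas \ref{lem:dotact_most} and \ref{lem:dotact_h}, handling the $\spline{g}$-block through the relation $\spline{g}_i-\spline{g}_{\bar i}=\spline{t}_i$ and the $\spline{h}$-block through the eigenvector $\spline{r}_n-2\spline{h}$. The only cosmetic difference is that you work with $\spline{g}_i$ modulo $\spline{T}$ where the paper changes basis to $\spline{g}_i+\spline{g}_{\bar i}$; these are equivalent.
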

\begin{proof}
    Remove $\spline{T}$ from the set in Lemma \ref{lem:left_basis} to get a basis for the left-quotient, and $\spline{R}$ from the set in Lemma \ref{lem:right_basis} to get a basis for the right quotient. From there, recall Lemma \ref{lem:dotact_most} and Lemma \ref{lem:dotact_h} and note that:
    \begin{itemize}[itemsep=6pt]
        \item the character of the action on $\spline{T}$ is $\bm{\chi}$,
        \item the action on $\spline{r}_i$ is trivial,
        \item the character of the action on $\spline{F}_H$ is $\ds  \sum_{i\text{ uncovered}} \fh_i $, 
        \item the character of the action on $\spline{Y}_H$ is $\ds \abs{\{i \in [n-1] \mid i\text{ surrounded}\}}\mathfrak{h_1}$, and that
        \item the character of the action on $\C\{\spline{r}_n-2\,\spline{h}\}$ is $\bm{\delta}$.
    \end{itemize}
    Finally, we have by the relation in Proposition \ref{prop:relations}(5) that $\spline{g}_i + \spline{g}_{\bar{i}} + \spline{t}_i = 2\spline{g}_i$. So as vector spaces $\C\{\spline{t}_i,\spline{g}_i \mid i \in [n]\} = \C\{\spline{t}_i,\spline{g}_i+\spline{g}_{\bar{i}} \mid i \in [n]\}$, which is a decomposition in to two $\Wn$-invariant subspaces. The action of $\Wn$ on $\{\spline{g}_i + \spline{g}_{\bar{i}} \mid i \in [n]\}$ is the same as the permutation action $i \mapsto \abs{w(i)}$, which is precisely the action on cosets of $\mathfrak{W}_1 \times \mathfrak{W}_{n-1}$, whose character is  is $\fs$. This is sufficient for computing the left character.\par 
    
    The right character follows from the general procedure of ``removing" from any permutation representation with permutation basis $\{v_i\}$ the trivial sub-representation $\C\left\{ \sum v_i \right\}$. In particular, if $\mathfrak{p}$ is the character of a representation with permutation basis $\{v_i\}$, then the character on the complementary subspace $\C\{v_i-v_{i+1}\}$ is $\mathfrak{p}-\mathbb{1}$.
\end{proof}
In type B, $t_n \in S(H)$ implies that $t_{n-1} \in S(H)$, and so it is impossible to get the $\bm{\delta}$ term from Theorem \ref{thm:character}. In type C the opposite is true: $t_{n-1} \in S(H)$ implies that $t_n \in S(H)$, and so $\delta$ may appear. This is the ``moreover" part of Theorem \ref{intthm:character}.
\begin{example}
    The table below gives every $\lrep{H}_1$ and $\lrep{H}_1$ for $n=4$.
\begin{table}[H]
\centering
\renewcommand{\arraystretch}{1.5}
\begin{tabular}{|c|p{2in}|p{2in}|p{0.3in}|}\hline
    { $t_i \in S(H)$} & {\large $\lrep{H}_1$} & \large $\rrep{H}_1$ &\large dim\\\hline
    $\emptyset$ & $\fh_1 +\fh_2+\fh_3+\fh_4-\bchi$  & $\fh_1 +\fh_2+\fh_3+\fh_4-4\triv$ & $76$ \\\hline
    $\{t_1\}$ & $\triv + \fh_3 + \fh_4 + \fs$   &$\bchi + \fh_3 + \fh_4 + \fs -3\triv$ & $53$\\\hline
    $\{t_2\}$ & $\triv+\fh_1+\fh_4+\fs $        &$\bchi+\fh_1+\fh_4+\fs -3\triv$ &$29$\\\hline
    $\{t_3\}$ & $\triv+\fh_1+\fh_2+\fs $        &$\bchi+\fh_1+\fh_2+\fs -3\triv$ &$37$\\\hline
    $\{t_4\}$ & $2\triv+\fh_1+\fh_2+\bm{\delta}$ &$\bchi+\fh_1+\fh_2+\bm{\delta}-2\triv$ &$35$\\\hline
    $\{t_1,t_2\}$ & $2\triv+\fh_4+\fs $         &$\bchi+\fh_4+\fs -2\triv$ &$22$\\\hline
    $\{t_1,t_3\}$ & $2\triv+\fh_1+\fs $         &$\bchi+\fh_1+\fs -2\triv$ &$14$\\\hline
    $\{t_1,t_4\}$ & $3\triv+\fh_1 +\bm{\delta}$ &$\bchi+\fh_1 +\bm{\delta}-\triv$ &$12$\\\hline
    $\{t_2,t_3\}$ & $2\triv+\fh_1+\fs $         &$\bchi+\fh_1+\fs -2\triv$ &$14$\\\hline
    $\{t_2,t_4\}$ & $3\triv+\fh_1+\bm{\delta} $ &$\bchi+\fh_1+\bm{\delta} -\triv$ &$12$\\\hline
    $\{t_3,t_4\}$ & $2\triv+\fh_1+\fh_2 $       &$\bchi+\fh_1+\fh_2 -2\triv$ &$34$\\\hline
    $\{t_1,t_2,t_3\}$ &$3\triv+\fs $            &$\bchi+\fs -\triv$ &$7$\\\hline
    $\{t_1,t_2,t_4\}$ &$4\triv+\bm{\delta} $    &$\bchi+\bm{\delta} $ &$5$\\\hline
    $\{t_1,t_3,t_4\}$ &$3\triv+\fh_1$           &$\bchi+\fh_1-\triv$ &$11$\\\hline
    $\{t_2,t_3,t_4\}$ & $3\triv+\fh_1 $         &$\bchi+\fh_1 -\triv$ &$11$\\\hline
    $\{t_1,t_2,t_3,t_4\}$ & $4\mathbb{1}$       &$\bchi$ &$4$\\\hline
    
\end{tabular}\vspace*{-2pt}
\captionsetup{width=6.2in}
\caption{All $\lrep{H}_1$ and $\rrep{H}_1$ for $n=4$.}
\label{tab:W4_repns}
\end{table}\vspace{-10pt}

\end{example}
\begin{example}
    Theorem \ref{thm:character} is easily applied to large $n$. If $n = 8$ and $H$ is such that $S(H) \cap \{t_i \mid i \in [n]\} = \{t_2,t_5,t_6,t_8\}$,
    then $\{1,4\}$ are uncovered, $\{2,5,6,7,8\}$ are shaded, $3$ is surrounded and $\{t_7,t_8\} \cap S(H) = \{t_8\}$. So 
    \[
    \lrep{H}_1 = 5\mathbb{1} + \fh_1+\fh_4 + \fh_1 + \bm{\delta}
    \]
    and 
    \[
    \rrep{H}_1 = \bchi + \fh_1+\fh_4 + \fh_1 + \bm{\delta} -3\triv, 
    \]
    both of which are representations of dimension $1158$.
\end{example}

\appendix
\section{Type B/C-Symmetric Functions.}\label{sec:symmetricfunctions}
Symmetric functions (with coefficients in $\C$) are formal power series in $\C[x_1,\ldots]$ invariant under any permutation of the variables. The symmetric functions form a graded ring over $\C$ denoted by $\Lambda(x) = \bigoplus_{n \geq 0} \Lambda_n(x)$ with several important bases. Each of these bases is indexed by partitions of positive integers. The bases employed here are the power sum, complete homogeneous, and Schur symmetric functions, denoted by $\{p_\lambda(x)\}$, $\{h_\lambda(x)\}$, and $\{s_\lambda(x)\}$, respectively.\par 
The \emph{type B/C symmetric functions} are formal power series in $\C[x_1,y_1,\ldots]$ invariant under any permutation of the $\{x_1,\ldots\}$ and $\{y_1,\ldots\}$, respectively. The type B/C symmetric functions also form a graded ring, denoted $\Lambda(x,y)$, where each graded piece is given by 
\[
\Lambda_n(x,y) \coloneqq \bigoplus_{k=0}^n \Lambda_k(x) \otimes \Lambda_{n-k}(y)
\]
The type B/C symmetric functions have several important bases indexed by pairs of partitions $\lambda,\mu$. Employed here are the plythestic power sum, double-homogeneous, and double-Schur bases, denoted by $\{p_\lambda(x\!+\!y),p_\mu(x\!-\!y)\}$, $\{h_\lambda(x)h_\mu(y)\}$, and $\{s_\lambda(x)s_\mu(y)\}$, respectively. To shorten notation, particularly in Table \ref{tab:reps}, we may denote $h_\lambda(x)h_\mu(y)$ and $s_\lambda(x)s_\lambda(y)$ as $h_{\lambda,\mu}$ and $s_{\lambda,\mu}$ respectively.\par 
The ring $\Lambda(x)$ is interesting for a great variety of reasons, one of which is that the \emph{Frobenius characteristic map} gives an isometric isomorphism from the character space of $\Sn$ to $\Lambda_n(x)$. This map is defined via the power-sum basis. If $\chi$ is a character of $\Sn$, then 
\[
\mathrm{Frob}(\chi) := \frac{1}{n!}\sum_{w \in \Sn} \chi(w) p_{\lambda(w)}(x), \;\;\text{where}\;\; \lambda(w) \text{ is the cycle type of } w.
\] 
This map sends the characters of irreducible representations to the Schur basis and the characters of permutation representations on cosets of Young subgroups to the complete homogeneous basis. \par 
Similarly, the type B/C-Frobenius characteristic map gives an isometric isomorphism from the character space of $\Wn$ to $\Lambda_n(x,y)$ \cite[\S1 Appendix B]{Macdonald_symfuncs}. If $\chi$ is a character of $\Wn$, then
\[
\mathrm{Frob}_{BC}(\chi) := \frac{1}{2^nn!}\sum_{w \in \Wn} \chi(w) p_{\lambda(w)}(x\!+\!y)p_{\mu(w)}(x\!-\!y),
\] 
where $\lambda(w),\mu(w)$ is the signed-cycle type of $w$. Even for common characters and small $n$ it can be difficult to compute $\mathrm{Frob}_{BC}(\chi)$ in other bases from the definition.\par 
\begin{example}\label{ex:BCFrob}
    The character of the defining representation $\bm{\chi}$ on $\mathfrak{W}_3$ is 
    \[
    \bm{\chi}(w) = \abs{\{i \in [n] \mid w(i) = i\}} - \abs{\{i \in [n] \mid w(i) = -i\}}.
    \]
    The following are all permutations in $\mathfrak{W}_3$ that are supported on $\bm{\chi}$:
    \begin{align*}
        \{w \in \mathfrak{W}_3 \mid \bm{\chi}(w) = -3\} &= \{[\bar{1},\bar{2},\bar{3}]\} \\
        \{w \in \mathfrak{W}_3 \mid \bm{\chi}(w) = -1\} &= \{
                    [\bar{1},\pm 3, \pm 2], [\pm 3,\bar{2},\pm 1], [\pm 2,\pm 1,\bar{3}],
                    [1,\bar{2},\bar{3}], [\bar{1},2,\bar{3}], [\bar{1},\bar{2},3]
                    \} \\
        \{w \in \mathfrak{W}_3 \mid \bm{\chi}(w) = 1\} &= \{
                    [1,\pm 3, \pm 2], [\pm 3,2,\pm 1], [\pm 2,\pm 1,3],
                    [1,2,\bar{3}], [1,\bar{2},3], [\bar{1},2,3]
                    \} \\
        \{w \in \mathfrak{W}_3 \mid \bm{\chi}(w) = 3\} &= \{[1,2,3]\}.
    \end{align*}
    So there are $32$ summands in $\Frob_{BC}(\bm{\chi})$ for $n = 3$. From there, the signed cycle-types are  as follows:
    \begin{center} \renewcommand{\arraystretch}{1.5}
    \begin{tabular}{|r|l|}\hline 
        $\lambda(w),\mu(w)$ & Supported $w \in \mathfrak{W}_3$  \\\hline
        $(2,1),\emptyset$ & $[1,3,2],[3,2,1],[2,1,3],[1,\bar{3},\bar{2}],[\bar{3},2,\bar{1}],[\bar{2},\bar{1},3]$ \\\hline
        $(2),(1)$ & $[\bar{1},3,2],[3,\bar{2},1],[2,1,\bar{3}],[\bar{1},\bar{3},\bar{2}],[\bar{3},\bar{2},\bar{1}],[\bar{2},\bar{1},\bar{3}]$ \\\hline
        $(1),(2)$ & $[1,3,\bar{2}],[3,2,\bar{1}],[2,\bar{1},3],[1,\bar{3},2],[\bar{3},2,1],[\bar{2},1,3]$ \\\hline
        $\emptyset,(2,1)$ & $[\bar{1},3,\bar{2}],[3,\bar{2},\bar{1}],[2,\bar{1},\bar{3}],[1,\bar{3},\bar{2}],[\bar{3},\bar{2},1],[2,\bar{1},\bar{3}]$ \\\hline
        $(1,1),(1)$ & $[1,2,\bar{3}], [1,\bar{2},3], [\bar{1},2,3]$  \\\hline
        $(1),(1,1)$ & $[1,\bar{2},\bar{3}], [\bar{1},2,\bar{3}], [\bar{1},\bar{2},3]$  \\\hline
        $(1,1,1),\emptyset$ & $[1,2,3]$ \\\hline
        $\emptyset,(1,1,1)$ & $[\bar{1},\bar{2},\bar{3}]$ \\\hline
    \end{tabular}
    \end{center}
    Even still, one must transform to the power-sum basis via $2p_1(x) = p_1(x+y)+p_1(x-y)$ and $2p_1(y) = p_1(x+y)-p_1(x-y)$ (which is sufficient here only because $n$ is small), then from the plethystic to homogeneous basis. One finds that $\Frob_{BC}(\bm{\chi}) = h_{(2),(1)}$ \cite{BCFromEx}.
\end{example}
It can often be much easier to compute $\Frob_{BC}$ using the rules for induced representations Table \ref{tab:reps}.
\begin{proposition}
    The defining character $\bm{\chi}$ on $\Wn$ is isomorphic to the induced character $\left(\bm{\delta} \times \mathbb{1} \right)_{\mathfrak{W}_1 \times \mathfrak{W}_{n-1}}^{\Wn}$. Moreover, $\Frob_{BC}(\bm{\chi}) = h_{(n-1),(1)}$.
\end{proposition}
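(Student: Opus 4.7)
The plan is to prove the two assertions in sequence: first realize the defining representation as an induced representation via an orbit-stabilizer argument, and then combine that isomorphism with the multiplicativity of $\Frob_{BC}$ under induction from a Young subgroup to identify the corresponding type B/C symmetric function.

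For the isomorphism, I would consider the line $L = \mathbb{C} e_1 \subset \mathbb{C}^n$. Each of the generators $s_2,\ldots,s_n$ fixes $e_1$ pointwise, while the reflection $(1,\bar{1})$ acts as $e_1 \mapsto -e_1$. Hence the setwise stabilizer of $L$ in $\Wn$ is exactly $\mathfrak{W}_1 \times \mathfrak{W}_{n-1}$, and it acts on $L$ through the one-dimensional character $\bm{\delta} \times \mathbb{1}$. The $\Wn$-orbit of $L$ is $\{\mathbb{C} e_1,\ldots,\mathbb{C} e_n\}$, a family of $n = [\Wn : \mathfrak{W}_1 \times \mathfrak{W}_{n-1}]$ distinct lines whose internal direct sum is $\mathbb{C}^n$. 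This is the standard realization of an induced representation as the $G$-span of an $H$-stable line with $[G:H]$ distinct translates, so $\bm{\chi} \cong (\bm{\delta} \times \mathbb{1})_{\mathfrak{W}_1 \times \mathfrak{W}_{n-1}}^{\Wn}$.

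For the Frobenius identity, I would compute $\Frob_{BC}$ factor-by-factor. On $\mathfrak{W}_1$ the elements $e$ and $(1,\bar{1})$ have signed cycle types $((1),\emptyset)$ and $(\emptyset,(1))$, so the definition in Section \ref{sec:symmetricfunctions} gives $\Frob_{BC}(\mathbb{1}_{\mathfrak{W}_1}) = \frac{1}{2}\bigl(p_1(x+y) + p_1(x-y)\bigr) = h_1(x)$ and $\Frob_{BC}(\bm{\delta}_{\mathfrak{W}_1}) = \frac{1}{2}\bigl(p_1(x+y) - p_1(x-y)\bigr) = h_1(y)$; the analogous (well-known) computation for the trivial character of $\mathfrak{W}_{n-1}$ yields $\Frob_{BC}(\mathbb{1}_{\mathfrak{W}_{n-1}}) = h_{n-1}(x)$. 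The multiplicativity relation $\Frob_{BC}\bigl((\psi \times \phi)_{\mathfrak{W}_a \times \mathfrak{W}_b}^{\mathfrak{W}_{a+b}}\bigr) = \Frob_{BC}(\psi) \cdot \Frob_{BC}(\phi)$ recorded in Table \ref{tab:reps} then produces $\Frob_{BC}(\bm{\chi}) = h_{n-1}(x) \cdot h_1(y) = h_{(n-1),(1)}$.

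The hardest part is purely bookkeeping of convention: ensuring that under $\Frob_{BC}$ the trivial character is sent to the $x$-alphabet while $\bm{\delta}$ goes to the $y$-alphabet, and that the ordering of the factors of $\mathfrak{W}_1 \times \mathfrak{W}_{n-1}$ matches the ordering of $((n-1),(1))$ in the target $h_{(n-1),(1)}$. Example \ref{ex:BCFrob} provides a sanity check at $n = 3$, confirming $\Frob_{BC}(\bm{\chi}) = h_{(2),(1)}$ by brute force.
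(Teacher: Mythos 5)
Your proof is correct, but it reaches the induction isomorphism by a genuinely different route than the paper. The paper works entirely at the level of characters: it writes $\bm{\chi}(w) = \abs{\{i \mid w(i)=i\}} - \abs{\{i \mid w(i)=\bar{i}\}}$, observes that conjugation permutes fixed points and negated points while the cosets of $\mathfrak{W}_1\times\mathfrak{W}_{n-1}$ are indexed by the value $v(1)$, and then evaluates the induced-character formula $\sum_{vH,\, v^{-1}wv\in H}(\bm{\delta}\times\mathbb{1})(v^{-1}wv)$ directly, splitting the sum according to the sign of $v(1)$. You instead give a module-level realization: $\C^n$ decomposes as the direct sum of the $n$ lines $\C e_1,\ldots,\C e_n$, which $\Wn$ permutes transitively; the line stabilizer is $\mathfrak{W}_1\times\mathfrak{W}_{n-1}$ (the inclusion of the generators plus the index count $[\Wn:\mathfrak{W}_1\times\mathfrak{W}_{n-1}]=n$ pins this down) acting on $\C e_1$ by $\bm{\delta}\times\mathbb{1}$, so $\C^n$ is the associated monomial representation. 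Your argument is more conceptual and yields an actual isomorphism of representations rather than an equality of characters; the paper's is more elementary in that it needs nothing beyond the induced-character formula. For the ``moreover'' clause you are also somewhat more explicit than the paper: you compute $\Frob_{BC}$ of the rank-one constituents by hand ($\mathbb{1}_{\mathfrak{W}_1}\mapsto h_1(x)$, $\bm{\delta}_{\mathfrak{W}_1}\mapsto h_1(y)$, $\mathbb{1}_{\mathfrak{W}_{n-1}}\mapsto h_{n-1}(x)$) and invoke multiplicativity under induction, whereas the paper leaves this step to the entries of Table \ref{tab:reps}. Both routes land on $h_{(n-1),(1)}$, consistent with the brute-force check in Example \ref{ex:BCFrob}.
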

\begin{proof}
    It follows from the definition that 
    \[
    \bm{\chi}(w) = \abs{\{i \in [n] \mid w(i) = i\}} - \abs{\{i \in [n] \mid w(i) = -i\}}.
    \]
    Now $\mathfrak{W}_1 \times \mathfrak{W}_{n-1}$ are those $w \in \Wn$ such that $w(1) = \pm 1$, and the character evaluation $\bm{\delta} \times \mathbb{1} (w)$ is $1$ if $w(1) = 1$ and $-1$ if $w(1) = -1$. Conjugation in $\Wn$ sends fixed points to fixed points and negative fixed points to negative fixed points. Formally, 
    \[\{i \in [n] \mid v^{-1}wv(i) = i\} = \left\{\abs{v^{-1}(i)} \mid w(i) = i\right\}\]
    and 
    \[\{i \in [n] \mid v^{-1}wv(i) = \bar{i}\} = \left\{\abs{v^{-1}(i)} \mid w(i) = \bar{i}\right\}.\]
    Furthermore, the cosets $v(\mathfrak{W}_1 \times \mathfrak{W}_{n-1})$ are entirely determined by the value $v(1)$. Finally, we can let $H \coloneqq \mathfrak{W}_1 \times \mathfrak{W}_{n-1}$ for notation, and compute directly
    \begin{align*}
        \left(\bm{\delta} \times \mathbb{1} \right)_{H}^{\Wn}(w) &= \sum_{\substack{vH \\ 
        v^{-1}wv \in H}} \bm{\delta} \times \mathbb{1}(v^{-1}wv) \\
        &= \sum_{\substack{vH \\ 
        v^{-1}wv \in H \\ v(1) > 0}} \bm{\delta} \times \mathbb{1}(v^{-1}wv) + \sum_{\substack{vH \\ 
        v^{-1}wv \in H \\ v(1) < 0}} \bm{\delta} \times \mathbb{1}(v^{-1}wv) \\
        &= \abs{\{i \in [n] \mid v^{-1}wv(i) = i\}}- \abs{\{i \in [n] \mid v^{-1}wv(i) = \bar{i}\}} \\
        &= \bm{\chi}(w).
    \end{align*}
\end{proof}

 The irreducible representations of $\Wn$, as indexed by pairs of partitions, can be constructed in several different ways, as the wreath product of $\Sn$ with $\Z_2$ (see \cite[\S2]{GeissKinch_hyperoctChars}, \cite[\S1,Appendix B]{Macdonald_symfuncs} or \cite[\S7]{Zelevinsky_reps_classicalgroups}), and as the Weyl group of Lie type B/C (see \cite[\S5]{NilpotentOrbits}). Under the Frobenius map, the character $\chi^{(\lambda,\mu)}$ of an irreducible representation is mapped to the double-Schur basis element $s_\lambda(x)s_\mu(y)$. \par 

     The double-Schur basis is related to the double-homogeneous basis in $\Lambda(x,y)$ in precisely the same way that the Schur basis is related to the homogeneous basis $\Lambda(x)$. In particular, $h_\lambda(x) = \sum_\gamma K_{\gamma,\lambda} s_\gamma(x)$ where $K_{\gamma,\lambda}$ is a \emph{Kostka number}, and so 
     \[\ds h_{\lambda}(x)h_\mu(y) = \sum_{\gamma}\sum_{\nu} K_{\gamma,\lambda}K_{\nu,\mu} s_\gamma(x)s_\nu(y).\] 

In general, $h_\lambda(x)h_\mu(y)$ does \textbf{not} correspond to an action on cosets of Young subgroups. In fact, those representations do not span the representation space of $\Wn$. This particular issue has raised questions as to what is an appropriate type B/C analogue of the graded Stanley-Stembridge conjecture of Shareshian and Wachs.
\begin{conjecture}[The Graded Stanley-Stembridge Conjecture \cite{StanleyStembridge,SW2016chromaticquasisymmetric}]\label{conj:stanstem}
    If $H$ is a type A Hessenberg space, then $\Frob(\lrep{H})$ is $h_\lambda(x)$-positive.
\end{conjecture}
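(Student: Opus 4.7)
The plan is to reduce the conjecture to a statement about chromatic quasisymmetric functions and then seek an explicit graded combinatorial expansion. In type A, the theorems of Shareshian--Wachs and Brosnan--Chow identify $\Frob(\lrep{H})$ with the chromatic quasisymmetric function $X_{G_H}(x;q)$ of the incomparability graph $G_H$ of the natural unit interval order $P_H$ attached to $H$. Under the fundamental involution $\omega$ on $\Lambda(x)$ which exchanges $h_\lambda$ and $e_\lambda$, the statement that $\Frob(\lrep{H})$ is $h_\lambda$-positive is equivalent to the Shareshian--Wachs graded refinement of Stanley--Stembridge: that $\omega(X_{G_H}(x;q))$ expands positively in $\{e_\lambda(x)\,q^k\}$. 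So the plan reduces to constructing a family of combinatorial objects $\mathcal{T}(H,\lambda)$ together with a nonnegative integer statistic $\mathrm{stat}\colon\mathcal{T}(H,\lambda)\to\mathbb{Z}_{\geq 0}$ whose generating polynomial is the coefficient of $h_\lambda(x)$ in $\Frob(\lrep{H})$.

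Next, I would try to build $\mathcal{T}(H,\lambda)$ and $\mathrm{stat}$ by grafting a $q$-grading onto Hikita's ungraded proof. Hikita's recent argument, which proves the ungraded Stanley--Stembridge conjecture, produces an explicit nonnegative expansion in $\{e_\lambda\}$ (equivalently, in $\{h_\lambda\}$ after $\omega$) of the chromatic symmetric function via a carefully chosen basis coming from a Hecke-algebra action. The natural refinement is to equip each of Hikita's basis elements with a length statistic that recovers the Shareshian--Wachs $q$-grading on $\lrep{H}$, and then group basis elements by the Young subgroup whose coset they index. At the base of the induction, the degree-one case in type A is handled already by the type A specialization of Proposition~\ref{prop:permutohedral} and Theorem~\ref{thm:character}, where the $\spline{f}$-splines indexed by subsets of $[n]$ partition according to cosets of $\mathfrak{S}_i\times\mathfrak{S}_{n-i}$, giving a manifestly $h_\lambda$-positive degree-one character. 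This matches the combinatorial expectation, and provides both a sanity check and a seed for an inductive argument.

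The hard part, and the reason this remains a conjecture despite decades of work, is constructing the graded combinatorial model in \emph{all} degrees. Upper triangularity (Proposition~\ref{prop:geom_properties}(3)) guarantees some basis of $\splines{H}$, but producing one whose elements organize along cosets of Young subgroups at every degree is open. A plausible inductive scheme would exploit the Guay-Paquet modular relations, which express $\Frob(\lrep{H})$ linearly in terms of those for simpler $H$ (abelian Hessenberg spaces handled by Harada--Precup, and disjoint-union factorizations), but a single non-positive cancellation in these relations prevents direct induction on $h$-positivity. The main obstacle is therefore to find a refinement of either the spline-theoretic basis or Hikita's algebraic basis that is simultaneously compatible with the $\poly$-module grading, with the dot action of $\mathfrak{S}_n$, and with the decomposition into Young-subgroup cosets — any two of these three compatibilities are achievable, but enforcing all three at once is precisely the graded Stanley--Stembridge conjecture.
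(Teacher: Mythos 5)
This statement is labeled as a \emph{conjecture} in the paper (Conjecture~\ref{conj:stanstem}), and the paper offers no proof of it; it only records that many partial results exist and that the \emph{ungraded} version was recently proven by Hikita. So there is no paper proof to compare against, and your proposal should be judged on whether it actually establishes the graded statement. It does not. The first two steps --- passing through the Shareshian--Wachs/Brosnan--Chow identification of $\Frob(\lrep{H})$ with a chromatic quasisymmetric function, and checking the degree-one case against Proposition~\ref{prop:permutohedral} and Theorem~\ref{thm:character} --- are accurate reformulations and sanity checks, but they carry no new content: the equivalence with $e$-positivity of $X_{G_H}(x;q)$ is exactly how the conjecture is usually stated, not a reduction that makes it easier. (One small slip there: $h$-positivity of $\Frob(\lrep{H})=\omega X_{G_H}(x;q)$ is equivalent to $e$-positivity of $X_{G_H}(x;q)$ itself, not of $\omega(X_{G_H}(x;q))$; applying $\omega$ twice returns you to the $h$-positivity statement you started with.)

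The genuine gap is the step you yourself flag as ``the hard part'': grafting a $q$-grading onto Hikita's ungraded argument, or equivalently producing a basis of $\splines{H}$ in every degree that is simultaneously compatible with the grading, the dot action, and a decomposition into Young-subgroup cosets. No construction is proposed for this step --- no candidate statistic, no candidate refinement of Hikita's basis, and no argument that such a refinement exists. Saying that ``the natural refinement is to equip each of Hikita's basis elements with a length statistic that recovers the Shareshian--Wachs $q$-grading'' is an aspiration, not a proof step; indeed Hikita's construction is not known to interact with the cohomological grading in the required way, and the proposed inductive scheme via the Guay-Paquet modular relation is explicitly acknowledged to break down because of sign cancellation. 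In short, the proposal is an honest and largely correct survey of why the conjecture is open, but it contains no argument that closes it; the conjecture remains unproven both in the paper and in your write-up.
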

There have been many partial results in service of proving Conjecture A.1 \cite{STANLEY1995chromsym, Gasharov96, GuayPaquet13, SW2016chromaticquasisymmetric,  harada2017cohomology, brosnan_chow_dotactn_is_chromsym,Dahlberg19, Abreu_Nigro20}, and a proof of the ungraded version was recently given by Hikita \cite{Hikita_stanstem}.\par 
In representation-theoretic terms, Conjecture \ref{conj:stanstem} asks that the character $\lrep{H}$ be written as a non-negative sum of characters of the action of $\Sn$ on its Young subgroups. As mentioned, this is not possible for $\Wn$, and so the proper generalization (if any) is up for debate. 
\begin{question}\label{que:hpos}
    Does each graded piece of $\Frob_{BC}(\lrep{H})$ expand with non-negative coefficients in the $h_\lambda(x)h_\mu(y)$-basis of $\Lambda(x,y)$?
\end{question}
Comparing Theorem \ref{thm:character} and Table \ref{tab:reps}, this paper has answered Question \ref{que:hpos} in the affirmative for the degree-one graded piece. A result in the ungraded case would also be quite interesting. \par 
In type A (so $\Sn$), every reflection subgroup is conjugate to a Young subgroup $\mathfrak{S}_\lambda$ for partitions $\lambda$, and the representation ring is freely generated by the permutation actions on the cosets of these subgroups. This is not the case in other types, in particular types B and C, where every Young subgroup is of type $\mathfrak{S}_\lambda \times \mathfrak{W}_{n-\abs{\lambda}}$. However, it \textbf{is} possible to generate the entire representation ring if one allows the permutation actions on other types of reflection subgroups. In particular, it suffices to consider the subgroups of type $\mathfrak{S}_\lambda \times \mathfrak{W}_\mu$ for pairs of partitions $\lambda,\mu$. The associated representations freely generate the representation ring (see the proof of Theorem 4.3 in \cite{Stembridge_Permuto}). So we can ask the following, even stronger question.
\begin{question}\label{que:refpos}
    Can the character $\lrep{H}$ be written as a non-negative sum of characters of the action of $\Wn$ on its reflection subgroups?
\end{question}
This paper has also answered Question \ref{que:refpos} in the affirmative for the degree-one graded piece, and any result in the ungraded case would also be quite interesting. We note that it may be necessary to consider reflection subgroups of type D. \par 
Conjecture \ref{conj:stanstem} was originally formulated as the $e$-positivity of \emph{chromatic quasisymmetric functions} of particular types of graphs \cite{StanleyStembridge,SW2016chromaticquasisymmetric}. The connection between these constructions was conjectured by Sharesian and Wachs, and later proven by Brosnan and Chow via monodromy and Guay-Paquet via Hopf algebras \cite{SW2016chromaticquasisymmetric,brosnan_chow_dotactn_is_chromsym,guaypaquet2016shar_wachs_conj}. This connection begs the question of whether there is a similar one in types B and C. Clearly, one would have to consider coloring schemes with two families of colors in order to get a type B/C symmetric function in two variables. It is unclear whether any of the currently-studied chromatic symmetric function generalizations \cite{Harary_balance_signedgraph,Zaslavsky_signedgraphs,Reiner_signedposets,Csar_rootrings,KurodaTsujie_signedchromsym,Skandera_hyperoct} are equal to $\Frob_{BC}(\lrep{H})$ in types B or C. 
\begin{question}
    Is there a reasonable graph-coloring schema for graphs built from $H$ such that the corresponding two-variable chromatic symmetric function is $\Frob_{BC}(\lrep{H})$?
\end{question}

For the right character $\rrep{H}$, in type A there is the major open problem of finding a combinatorial formula for the coefficients of $s_\lambda(x)$ in $\Frob(\rrep{H})$, a question motivated by the study of unicellular LLT polynomials \cite{Leclerc2000LLT_LR_KL,Blasiak2016LLTMacdonald,guaypaquet2016shar_wachs_conj,Ayzenberg2018isospectral, ALEXANDERSSON2018LLTchromsym,Huh2020LLTLollipop,Alexandersson2020Lollipop,Lee2021LLTlinear}. Here, the appropriate generalization is much clearer.
\begin{question}\label{que:rightComb}
    Can one find any formula, in particular a combinatorial formula, for the coefficients of $s_\lambda(x)s_\mu(y)$ in $\Frob_{BC}(\rrep{H})$? 
\end{question}
Again, comparing Theorem \ref{thm:character} and Table \ref{tab:reps}, this paper has answered Question \ref{que:rightComb} in the affirmative for the degree-one graded piece.

\clearpage 
\begin{table}[H]
    \centering
    \renewcommand{\arraystretch}{2.5}
    \begin{tabular}{|p{3.5in}|p{2.3in}|}\hline
        {\large Representations or  Characters of $\Wn$} & {\large Expansions in $\Lambda_n(x,y)$} \\\hline
        The trivial character $\mathbb{1}$ & $s_{(n),\emptyset} = h_{(n),\emptyset}$ \\\hline
        The sign character $\mathrm{sgn}$ & $s_{\emptyset,(1^n)}$ \\\hline
        $\bm{\delta}$ where $\bm{\delta}(w) = (-1)^{\abs{\Neg(w)}}$ & $s_{\emptyset,(n)} = h_{\emptyset,(n)}$ \\\hline
        The irreducible character $\chi^{(\lambda,\mu)}$ & $s_{\lambda,\mu}$ \\\hline
        $\mathrm{sgn} \otimes \chi^{(\lambda,\mu)} $ & $s_{\mu^t,\lambda^t}$ \\\hline
        $\bm{\delta} \otimes \chi^{(\lambda,\mu)} $ &  $s_{\mu,\lambda}$ \\\hline
        The defining representation on $\C^n$ (so $\bm{\chi}$) & $h_{(n-1),(1)}$ \\\hline 
        Action on cosets of $\mathfrak{S}_k\times \mathfrak{W}_{n-k}$ (so $\fh_k$) & $\ds h_{(n-k),\emptyset} \sum_{j=0}^{\lambda_i} h_{(j),(k-j)}$ \\\hline
        Action on cosets of $\mathfrak{W}_1\times \mathfrak{W}_{n-1}$ (so $\fs$) & $\ds h_{(n-1,1),\emptyset}$ \\\hline
        Action on cosets of $\mathfrak{S}_\lambda \times \mathfrak{W}_\mu$ & $\ds h_{\mu,\emptyset} \prod_{\lambda_i \in \lambda} \sum_{j=0}^{\lambda_i} h_{(j),(\lambda_i-j)}$ \\\hline
        Action on cosets of the type D${}_n$ subgroup & $\ds h_{(n),\emptyset} + h_{\emptyset,(n)} $ \\\hline
        An induced character $\left(\chi \times \theta\right)_{\mathfrak{W}_k \times \mathfrak{W}_\ell}^{\Wn}$ & $\Frob_{BC}(\chi)\Frob_{BC}(\theta)$\\\hline
        The induced character $\left(\chi^{(\lambda,\mu)} \times \chi^{(\alpha,\beta)}\right)_{\mathfrak{W}_k \times \mathfrak{W}_\ell}^{\Wn}$ & $(s_\lambda(x)\cdot s_\alpha(x))(s_\mu(y)\cdot s_\beta(y))$\\\hline
        The induced character $\left(\mathbb{1}\right)_{\Sn}^{\Wn}$ & ${\ds \sum_{k=0}^n s_{(k),(n-k)}  = \sum_{k=0}^n h_{(k),(n-k)}}$ \\\hline
        The induced character $\left(\mathbb{1} \times \bm{\delta} \right)_{\mathfrak{W}_\lambda \times \mathfrak{W}_\mu}^{\Wn}$ & $\ds h_{\lambda,\mu} = \sum_{\gamma}\sum_{\nu} K_{\gamma,\lambda}K_{\nu,\mu}s_\gamma(x)s_\nu(y)$ \\\hline
    \end{tabular}\vspace*{4pt}
    \captionsetup{width=6.2in}
    \caption{Representations of $\Wn$ and  Frobenius characteristics \cite{GeissKinch_hyperoctChars,Macdonald_symfuncs, Zelevinsky_reps_classicalgroups, Stembridge_Permuto, Stembridge_guide, Skandera_hyperoct}.}
    \label{tab:reps}
\end{table}
\clearpage 

\renewcommand{\baselinestretch}{0.97} 

\printbibliography

\end{document}